\documentclass[reqno,fleqn]{article}

\usepackage{amsthm,amsmath,amstext,amssymb}
\usepackage[all]{xy}
\usepackage{combelow} 
\usepackage{tensor} 
\usepackage{color}
\usepackage[a4paper]{geometry}
\geometry{footskip=0.5cm,left=20mm,right=17mm,bottom=20mm,top=2cm,headsep=0.7cm}
\headheight14pt
\usepackage{hyperref} 

\newtheorem{theorem}{Theorem}[section]
\newtheorem{lem}[theorem]{Lemma}
\newtheorem{prop}[theorem]{Proposition}
\newtheorem{corollary}[theorem]{Corollary}

\theoremstyle{definition}
\newtheorem{defi}[theorem]{Definition}
\newtheorem{example}[theorem]{Example}
\newtheorem{remark}[theorem]{Remark}

\newtheorem{subsec}[theorem]{}

\newcommand{\oG}{\bar{G}}
\newcommand{\g}{\bar{g}}
\newcommand{\h}{\bar{h}}
\renewcommand{\i}{\bar{i}}
\renewcommand{\j}{\bar{j}}
\newcommand{\x}{\bar{x}}
\newcommand{\y}{\bar{y}}
\newcommand{\z}{\bar{z}}
\renewcommand{\t}{\bar{t}}
\renewcommand{\O}{\mathcal{O}}
\newcommand{\K}{\mathcal{K}}
\newcommand{\C}{\mathcal{C}}
\newcommand{\Z}{\mathcal{Z}}
\DeclareFontFamily{OT1}{pzc}{}
\DeclareFontShape{OT1}{pzc}{m}{it}{<->s*[1.10] pzcmi7t}{}
\DeclareMathAlphabet{\mathscr}{OT1}{pzc}{m}{it}
\renewcommand{\k}{\mathscr{k}}
\newcommand{\set}[1]{\left\{#1\right\}}
\newcommand{\para}[1]{\left(#1\right)}
\newcommand{\Hom}[3]{\mathrm{Hom}_{#1}(#2,#3)}

\newcommand{\op}{^\mathrm{op}}
\newcommand{\md}{\textrm{-}\mathrm{mod}}
\newcommand{\Md}{\textrm{-}\mathrm{Mod}}
\newcommand{\triple}[3]{\left(#1,#2,#3\right)}
\newcommand{\Ms}{M^{\ast}}

\title{Character triples and equivalences over a group graded $G$-algebra}
\author{Andrei Marcus {\rm and} Virgilius-Aurelian Minu\cb{t}\u{a}}
\date{}

\begin{document}

\maketitle

{\bf Abstract.} We introduce Morita and Rickard equivalences over a group graded $G$-algebra between block extensions. A consequence of such equivalences is that Sp\"ath's central order relation holds between two corresponding character triples.

\bigskip

{\bf Keywords.} Finite group algebras, block extensions, character triples, group graded algebras, Morita equivalence, Rickard equivalence.

\bigskip

{\bf MSC 2010.} 20C20, 20C05, 16W50, 16S35, 16D90, 18E30.

\renewcommand{\thefootnote}{}
\footnotetext[1]{\center
\begin{tabular}{c}
{\small\em Babe\cb s-Bolyai University} \\
{\small\em Department of Mathematics}\\
{\small\em Str. Mihail Kog\u alniceanu nr. 1}\\
{\small\em  400084 Cluj-Napoca, Romania}\\
{\small\em E-mail: {\tt marcus@math.ubbcluj.ro}} \\
{\small\em E-mail: {\tt minuta.aurelian@math.ubbcluj.ro}}
\end{tabular}}

\section{Introduction}  \label{s:preliminaries}

An important strategy to tackle the local-global conjectures in modular representation theory of finite groups is to try to reduce them to stronger statements about simple groups. Such reduction theorems are obtained by techniques of Clifford theory, and in particular, by using the language of character triples. This paper is motivated by the recent results of Britta Sp\"ath, surveyed in \cite{ch:Spath2017} and \cite{ch:Spath2018}, on inductive conditions for the McKay conjecture and the Alperin weight conjecture. Her reduction theorems involve certain order relations between two character triples, which forces that the two triples have ``the same Clifford theory".

Another motivation comes from the convictions, expressed primarily in the work of Michel Brou\'e (see, for instance, \cite{article:Broue1990} and \cite{article:Broue1992}), that character correspondences with good properties are consequences of categorical equivalences, like Morita equivalences or Rickard equivalences between blocks of group algebras. To explain the link between these two points of view, let us introduce our context.

We consider a finite group $G$,  a $p$-modular system $(\K,\O,\k)$, where $\O$ is a complete discrete valuation ring, $\K$ is the field of fractions of $\O$  and $\k=\O/J(\O)$ is its residue field, together with the assumptions that $\k$ is algebraically closed, and that $\K$ contains all the unity roots of order $|G|$.

Let $N$ be a normal subgroup of $G$, and denote $\oG:=G/N$. Let also  $G'$ be a subgroup of $G$ such that $G=NG'$, and let $N'=G'\cap N$. Let $b$ and $b'$ be $\bar G$-invariant blocks of $\mathcal{O}N$ and $\mathcal{O}N'$, respectively, and consider the strongly $\oG$-graded $\O$-algebras $A=b\mathcal{O}G$ and $A'=b'\mathcal{O}G'$, with identity components $B=b\mathcal{O}N$ and $B'=b'\mathcal{O}N'$. One of the conditions in Sp\"ath's definition \cite[Definition 3.1]{ch:Spath2017}, \cite[Definition 2.7]{ch:Spath2018} of the central order $\le_c$ between two character triples is that $C_G(N)\le G'$ and  that the projective characters associated to the character triples  take the same scalar values on elements of $C_G(N)$. Observe that in this situation, the group algebra $\mathcal{C}=\mathcal{O}C_G(N)$ is a $\bar G$-graded subalgebra of both $A$ and $A'$, and has an obvious $\bar G$-action compatible with the $\bar G$-grading.

We already know that equivalences induced by $\bar G$-graded bimodules preserve many Clifford theoretical invariants (see \cite[Chapter 5]{book:Marcus1999}, \cite{art:Marcus2009} and \cite{article:MM2019})). The relation $\le_c$ between  character triples leads us to the consideration of $\bar G$-graded $(A,A')$-bimodules $\tilde M$ satisfying $m_{\bar g}c={}^{\bar g}\,c m_{\bar g}$ for all $\bar g\in \bar G$, $c\in \mathcal{C}$ and $m_{\bar g}\in \tilde M_{\bar g}$. It turns out that equivalences induced by such bimodules imply the relation $\le_c$ between corresponding character triples.

In Section \ref{s:bimodules} we introduce $\bar G$-graded $(A,A')$-bimodules over a $\bar G$-graded $\bar G$-acted algebra $\mathcal{C}$, and we establish their main properties. The main result of this section is Theorem \ref{th:equivalent_categories}, where we show that the category $A\textrm{-}\mathrm{Gr}/\C{\textrm-}A'$ is equivalent to the category of $\Delta^{\mathcal{C}}$-modules, where $\Delta^{\mathcal{C}}=\bigoplus_{\bar g\in \bar G}A_{\bar g}\otimes_{\mathcal{C}}{A'}^{\mathrm{op}}_{\bar g}$. Actually, there are three naturally isomorphic functors giving this equivalence, and we also prove in Proposition \ref{lemma:hom} that these functors are also compatible with tensor products and with taking homomorphisms. This property is required in Section  \ref{s:Morita}, where we show that $\bar G$-graded Morita equivalences over $\mathcal{C}$ can be induced from certain equivariant Morita equivalences between $B$ and $B'$. We also prove in Theorem \ref{th:butterfly} an analogue of the Butterfly theorem \cite[Theorem 2.16]{ch:Spath2018}, generalizing the main result of \cite{article:MM2019}. In Section \ref{s:scott} we show how to obtain $\bar G$-graded Morita equivalences over $\mathcal{C}$ from the Morita equivalences induced by the Scott module $\operatorname{Sc}(N\times N',\Delta Q)$ of Koshitani and Lassueur \cite{article:KL1},  \cite{article:KL2}. We present several of the results of Sections \ref{s:preliminaries}, \ref{s:Morita} and \ref{s:Rickard} in the context of $\bar G$-graded crossed product, but note that those result can be easily shown to hold for more general strongly $\bar G$-graded algebras $A$ and $A'$.

In order to take advantage of this categorical setting in Section \ref{s:triples} we introduce module triples $(A,B,V)$, where $V$ is a $\bar G$-invariant simple $\mathcal{K}\otimes_{\mathcal{O}}B$-module, and the relation $\ge_c$ between two module triples $(A,B,V)$ and $(A',B',V')$. Our final main result is Theorem \ref{t:eq-triples}, where we prove that if the module triples $(A,B,V)$ and $(A',B',V')$ correspond under a $\bar G$-graded Rickard equivalence over $\mathcal{C}=\mathcal{O}C_G(N)$, then $(A,B,V)\ge_c(A',B',V')$; this, in turn, implies the relation $(G,N,\theta)\ge_c(G'N',\theta')$ between the associated character triples.

For any unexplained concepts and results, our general references are \cite{book:L2018} and \cite{book:Marcus1999}. Finally, note  that our approach to character triples  is different from that of Turull \cite{art:turull2017}. We will discuss the precise relationship in another paper.


\section{Bimodules over a \texorpdfstring{$\bar{G}$}{G}-graded \texorpdfstring{$\bar{G}$}{G}-acted algebra} \label{s:bimodules}


\begin{subsec} Let $A=\bigoplus_{\bar{g}\in\oG} A_{\bar{g}}$  be a  $\oG$-graded $\O$-algebra with the identity component $B:=A_1$. We will assume that $A$ is a crossed product, and  we choose invertible homogeneous elements $u_{\bar{g}}$ in the component $A_{\bar{g}}$, for any $\bar g\in \bar G$. Let also  $A'$ be another crossed product with identity component $B'$, and choose invertible homogeneous elements  $u'_{\bar{g}}\in A'_{\bar{g}}$, for all $\bar{g}\in \oG$.
\end{subsec}

\begin{defi}  An $\mathcal{O}$-algebra $\C$ is a \textit{$\oG$-graded $\oG$-acted $\mathcal{O}$-algebra} if
 \begin{enumerate}
	\item[(1)] $\C$ is $\oG$-graded, and we write $\C=\bigoplus_{\bar{g}\in \bar G}\C_{\bar{g}}$;
	\item[(2)] $\oG$ acts on $\C$ (always on the left in this article);
	\item[(3)] for all $\bar{g},\bar{h}\in \oG$ and for all $c\in \C_{\bar{h}}$ we have  $\tensor*[_{}^{\bar{g}}]{c}{_{}^{}}\in \C_{\tensor*[_{}^{\bar{g}}]{\bar{h}}{_{}^{}}}$.
 \end{enumerate}
We denote the 1-component of $\C$ by $\Z:=\C_1$, which is a $\oG$-acted algebra.
\end{defi}

\begin{example}   The centralizer $C_A(B)$ of $B$ in $A$ is a $\oG$-graded $\oG$-acted algebra, where for all $\bar{h}\in\oG$,
\[C_A(B)_{\bar{h}}=\set{a\in A_{\bar{h}}\mid ab=ba,\,\forall b\in B},\] so $C_A(B)_1=Z(B)$, and the action is given by ${}^{\bar g}c=u_{\bar{g}}cu_{{\bar{g}}^{-1}},$ for all $c\in C_A(B)_{\bar{h}}$ and  $\bar{g},\bar{h}\in \oG$. This definition does not depend on the choice of the elements $u_{\bar{g}}$. This example goes back to  Dade's work \cite{article:Dade1973} on Clifford theory of blocks.
\end{example}

\begin{defi} \label{def:graded_algs_over_C} Let $\C$ be a $\oG$-graded $\oG$-acted $\O$-algebra. We say the $A$ is a {\it $\bar G$-graded $\mathcal{O}$-algebra over $\C$}   if there is a  $\oG$-graded $\oG$-acted algebra homomorphism
\[\zeta:\C\to C_A(B),\] i.e. for any $\bar{h}\in \oG$ and $c\in \C_{\bar{h}}$, we have $\zeta(c)\in C_A(B)_{\bar{h}}$, and for every $\bar{g}\in\oG$, $\zeta(\tensor*[^{\bar{g}}]{c}{})=\tensor*[^{\bar{g}}]{\zeta(c)}{}$.
\end{defi}

\begin{defi} \label{def:graded_bimodules_over_c} Let $A$ and $A'$ be two $\bar G$-graded crossed products over $\mathcal{C}$, with structure maps $\zeta$ and $\zeta'$, respectively.

 a)	We say that $\tilde{M}$ is a $\oG$-\textit{graded $(A,A')$-bimodule over ${\C}$} if:
	\begin{enumerate}
		\item[(1)] $\tilde{M}$ is an $(A,A')$-bimodule;
		\item[(2)] $\tilde{M}$ has a decomposition $\tilde{M}=\bigoplus_{\bar{g}\in\oG}\tilde{M}_{\bar{g}}$ such that $A_{\bar{g}}\tilde{M}_{\bar{x}}A'_{\bar{h}}\subseteq \tilde{M}_{\bar{g}\bar{x}\bar{h}}$, for all $\bar{g}, \bar{x},\bar{h}\in \oG$;
		\item[(3)] $\tilde{m}_{\bar{g}}\cdot c=\tensor*[^{\bar{g}}]{c}{}\cdot \tilde{m}_{\bar{g}}$, for all $c\in \C,\tilde{m}_{\bar{g}}\in\tilde{M}_{\bar{g}},\bar{g}\in \oG$, where $c \cdot \tilde {m} = \zeta(c)\cdot \tilde{m}$ and $\tilde{m}\cdot c=\tilde{m}\cdot \zeta'(c)$, for all $c\in \C,\tilde{m}\in\tilde{M}$.
	\end{enumerate}

b) $\oG$-graded $(A,A')$-bimodules over $\C$ form a category, which we will denote by $A{\textrm-Gr}/\C{\textrm-}A'$, where the morphisms between $\oG$-graded $(A,A')$-bimodules over $\C$ are just homomorphism between $\oG$-graded $(A,A')$-bimodules.
\end{defi}

\begin{remark} Condition (3) of Definition \ref{def:graded_bimodules_over_c} can be replaced by
\begin{enumerate}
\item[$(3')$] $m\cdot c= c\cdot m$, for all $c\in \C$, $m\in \tilde{M}_1$.
\end{enumerate}
Indeed,  it is obvious that (3) implies $(3')$.  Conversely, note that by \cite[Lemma 1.6.3]{book:Marcus1999}, $\tilde{M}\simeq A\otimes_B M$ as $\oG$-graded $(A,A')$-bimodules, where $M=\tilde{M}_1$. Hence, for all $\tilde{m}_{\g}\in \tilde{M}_{\g}$, there exists  $a_{\g}\in A_{\g}$ and $m\in M$ such that $\tilde{m}_{\g}=a_{\g}m$, and  there exists $b\in B$ such that $a_{\g}=u_{\g}b$. Thus we have:
	\[\tilde{m}_{\g}\cdot c=a_{\g}mc=a_{\g}cm=u_{\g}bcm=u_{\g}cbm=u_{\g}cu_{\g}^{-1}u_{\g}bm =\tensor*[^{\bar{g}}]{c}{}a_{\g}m=\tensor*[^{\bar{g}}]{c}{}\cdot \tilde{m}_{\bar{g}}.\]
\end{remark}

\begin{subsec} We regard ${A'}\op$ as a $\oG$-graded algebra with components $(A'{\op})_{\bar{g}}={A'}\op_{\g}=A'_{\bar{g}^{-1}}$, $\forall \bar{g}\in\oG$. We denote by ``$\ast$" the multiplication in ${A'}\op$. We consider the diagonal part of $A\otimes_{\C} {A'}\op$:
\[\Delta^{\C}:=\Delta(A\otimes_{\C} {A'}\op):=\bigoplus_{\bar{g}\in\oG}A_{\bar{g}}\otimes_{\C} A'_{\bar{g}^{-1}}.\]
Note that $\Delta^{\C}$ is well-defined. Indeed, consider $\g\in\oG$ and $a_{\g}\in A_{\g}$, ${a'}\op_{\g}\in {A'}\op_{\g}$; if  $\x,\y\in\oG$ are such that $\x\y=\g$ and $a_{\x}\in A_{\x}$, $c_{\y}\in \C_{\y}$ are such that $a_{\g}=a_{\x}c_{\y}$, then we have
	\[		a_{\g}\otimes_{\C} {a'}\op_{\g}  =  a_{\x}c_{\y}\otimes_{\C} {a'}\op_{\g}=  a_{\x}\otimes_{\C} c_{\y}{a'}\op_{\g};	\]
but $c_{\y}{a'}\op_{\g}\in A'_{\y}{A'}\op_{\g}=A'_{\y}A'_{\g^{-1}}\subseteq A'_{\y\g^{-1}}=A'_{\y(\x\y)^{-1}}=A'_{\x^{-1}}={A'}\op_{\x}$, and $a_{\x}\in A_{\x}$, thus $a_{\x}\otimes_{\C} c_{\y}{a'}\op_{\g}\in A_{\x}\otimes_{\C} {A'}\op_{\x}\subseteq \Delta^{\C}$.
\end{subsec}

\begin{lem} \label{lemma:Delta-Algebra} {\rm1)}	$\Delta^{\C}$ is an $\O$-algebra.

{\rm2)} $A\otimes_{\C} {A'}\op$ is a right $\Delta^{\C}$-module and a $\oG$-graded $(A,A')$-bimodule over $\C$.
\end{lem}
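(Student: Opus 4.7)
The plan is to define the multiplication in $\Delta^{\C}$ (part~1) and the right $\Delta^{\C}$-action on $A\otimes_{\C}{A'}\op$ (part~2) by the single componentwise rule
\[(a\otimes y)\cdot(\tilde a\otimes \tilde y):=a\,\tilde a\otimes (y\ast \tilde y),\]
where $\ast$ is the product in ${A'}\op$; in part~1 both factors lie in $\Delta^{\C}$, while in part~2 only the right factor is required to do so. Every verification reduces to two ``conjugation'' identities, obtained by writing $a_{\h}=u_{\h}b$ with $b\in B$ and combining the $\oG$-equivariance of $\zeta$ with $\zeta(\C)\subseteq C_A(B)$:
\[\zeta(c)\,a_{\h}=a_{\h}\,\zeta({}^{\h^{-1}}c),\qquad a_{\h}\,\zeta(c)=\zeta({}^{\h}c)\,a_{\h}\qquad(c\in\C,\ a_{\h}\in A_{\h}),\]
together with their analogues in $A'$ obtained from $\zeta'$.

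For part~1, the product sends $A_{\g}\otimes_{\C}{A'}\op_{\g}\times A_{\h}\otimes_{\C}{A'}\op_{\h}$ into $A_{\g\h}\otimes_{\C}{A'}\op_{\g\h}$, so it stays inside $\Delta^{\C}$. To check well-definedness across $\otimes_{\C}$, compare $(a\,\zeta(c)\otimes y)\cdot(a_{\h}\otimes y_{\h})$ with $(a\otimes \zeta'(c)\,y)\cdot(a_{\h}\otimes y_{\h})$: sliding $\zeta(c)$ past $a_{\h}$ by the first identity above and using the tensor-balancing relation, together with the $A'$-analogue on the other side, produces the same element because the $\oG$-shifts $c\mapsto{}^{\h^{\pm 1}}c$ picked up on the two sides cancel. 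Well-definedness in the other tensor factor is symmetric, and associativity and the unit $1\otimes 1$ are inherited from $A$ and ${A'}\op$.

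For part~2, the right $\Delta^{\C}$-action defined by the same formula is well-defined and satisfies the right-module axioms by exactly the same manipulations. To obtain the $\oG$-graded $(A,A')$-bimodule structure over $\C$, equip $\tilde{M}:=A\otimes_{\C}{A'}\op$ with the grading
\[\tilde{M}_{\bar m}:=\bigoplus_{\g\h^{-1}=\bar m}A_{\g}\otimes_{\C}{A'}\op_{\h},\]
whose identity component is exactly $\Delta^{\C}$. The grading condition $A_{\g}\tilde{M}_{\x}A'_{\h}\subseteq\tilde{M}_{\g\x\h}$ reads off from the gradings of $A$ and $A'$ together with ${A'}\op_{\bar k}=A'_{\bar k^{-1}}$. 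Condition~(3) of Definition~\ref{def:graded_bimodules_over_c} then reduces, on a homogeneous $a_{\x}\otimes y_{\h}$ of degree $\bar m=\x\h^{-1}$, to the chain
\[(a_{\x}\otimes y_{\h})\cdot c=a_{\x}\otimes y_{\h}\zeta'(c)=a_{\x}\zeta({}^{\h^{-1}}c)\otimes y_{\h}=\zeta({}^{\bar m}c)\,a_{\x}\otimes y_{\h}={}^{\bar m}c\cdot(a_{\x}\otimes y_{\h}),\]
obtained by two applications of the identities above and one use of the tensor relation.

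The single delicate point is the well-definedness across $\otimes_{\C}$: the tensor is balanced over a noncommutative, $\oG$-graded algebra $\C$ whose image lies only in the centralizers of $B$ and $B'$, not in the centres of $A$ and $A'$. The compensation is precisely the $\oG$-action on $\C$, and the content of every computation is to confirm that the $\oG$-shifts produced by moving $\zeta$ past a homogeneous element of $A$ are cancelled by the corresponding shifts produced by moving $\zeta'$ past a homogeneous element of $A'$. Once this bookkeeping is in place, both parts follow at once.
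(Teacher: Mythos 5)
Your proposal is correct and follows essentially the same strategy as the paper: the paper carries out the same verifications, but it does so by explicitly inserting the invertible homogeneous elements $u_{\bar g}$ and $u'_{\bar g}$ and pushing them through each tensor expression, whereas you package that bookkeeping once and for all into the two conjugation identities $\zeta(c)a_{\h}=a_{\h}\zeta({}^{\h^{-1}}c)$ and $a_{\h}\zeta(c)=\zeta({}^{\h}c)a_{\h}$ (and their $A'$-analogues). That reformulation is a welcome clarification but not a different route; one small omission is that the $\O$-algebra structure on $\Delta^{\C}$ (the paper records a ring map $\varphi\colon\C_1\to Z(\Delta^{\C})$, $c\mapsto\zeta(c)\otimes 1=1\otimes\zeta'(c)$) deserves a sentence beyond ``the unit is inherited,'' though it is routine given your framework.
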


\begin{proof} 1) 	We define the multiplication as follows:
	\[(a_{\g}\otimes_{\C} {{a_{\g}'}}\op)(a_{\h}\otimes_{\C} {{a_{\h}'}}\op):=a_{\g}a_{\h}\otimes_{\C} {a_{\g}'}{\op}\ast {a_{\h}'}\op,\]
for all $a_{\g}\in A_{\g},\,a_{\h}\in A_{\h},{a'}\op_{\g}\in {A'}\op_{\g},\,{a'}\op_{\h}\in {A'}\op_{\h}$, for all $\g,\h\in \oG$.
	We prove that this  is well-defined, i.e. it is $\C$-balanced. Indeed, let
	$\g,\h\in\oG$ and $a_{\g}\in A_{\g}$, $a_{\h}\in A_{\h}$, ${a'}\op_{\g}\in {A'}\op_{\g}$, ${a'}\op_{\h}\in {A'}\op_{\h}$ and if there exists some $\x,\y,\z,\t\in\oG$ such that $\x\y=\g$, $\z\t=\h$ and $a_{\x}\in A_{\x}$, $c_{\y}\in\C_{\y}$, $a_{\z}\in A_{\z}$, $c_{\t}\in\C_{\t}$ such that $a_{\g}=a_{\x}c_{\y}$, $a_{\h}=a_{\z}c_{\t}$, then $a_{\g}\otimes_{\C} {a'}\op_{\g} = a_{\x}\otimes_{\C} c_{\y}{a'}\op_{\g}$ and $a_{\h}\otimes_{\C} {a'}\op_{\h} = a_{\z}\otimes_{\C} c_{\t}{a'}\op_{\h}$. We have:
	\[
	(a_{\g}\otimes_{\C} {a_{\g}'}\op)(a_{\h}\otimes_{\C} {a_{\h}'}\op)=a_{\g}a_{\h}\otimes_{\C} {a_{\g}'}\op\ast {a_{\h}'}\op,
	\]
	and
	\[
\begin{array}{rcl}
	(a_{\x}\otimes_{\C} c_{\y}{a'}\op_{\g})(a_{\z}\otimes_{\C} c_{\t}{a'}\op_{\h}) & = & a_{\x}a_{\z}\otimes_{\C} (c_{\y}{a'}\op_{\g})\ast (c_{\t}{a'}\op_{\h})\\
	& = & a_{\x}a_{\z}\otimes_{\C} c_{\t}{a'}\op_{\h}c_{\y}{a'}\op_{\g};
\end{array}	
	\]
since ${a'}\op_{\h}\in {A'}\op_{\h}=A'_{\h^{-1}}$, there exists some ${b'}\in A'$ such that ${a'}\op_{\h}=u'_{\h^{-1}}{b'}$, hence
\begingroup
\allowdisplaybreaks
	\[
\begin{array}{rcl}
	(a_{\x}\otimes_{\C} c_{\y}{a'}\op_{\g})(a_{\z}\otimes_{\C} c_{\t}{a'}\op_{\h}) & = & a_{\x}a_{\z}\otimes_{\C} c_{\t}u'_{\h^{-1}}{b'}c_{\y}{a'}\op_{\g}\\
	& = & a_{\x}a_{\z}\otimes_{\C} c_{\t}u'_{\h^{-1}}c_{\y}{b'}{a'}\op_{\g}\\
	& = & a_{\x}a_{\z}\otimes_{\C} c_{\t}u'_{\h^{-1}}c_{\y}u'^{-1}_{\h^{-1}}u'_{\h^{-1}}{b'}{a'}\op_{\g}\\
	& = & a_{\x}a_{\z}\otimes_{\C} c_{\t}\tensor*[^{\h^{-1}}]{c}{_{\y}}{a'}\op_{\h}{a'}\op_{\g}\\
	& = & a_{\x}a_{\z}c_{\t}\otimes_{\C} \tensor*[^{\h^{-1}}]{c}{_{\y}}{a'}\op_{\h}{a'}\op_{\g}\\
	& = & a_{\x}a_{\h}\otimes_{\C} \tensor*[^{\h^{-1}}]{c}{_{\y}}{a'}\op_{\h}{a'}\op_{\g}\\
	& = & a_{\x}a_{\h}\tensor*[^{\h^{-1}}]{c}{_{\y}}\otimes_{\C} {a'}\op_{\h}{a'}\op_{\g}\\
	& = & a_{\x}a_{\h}\tensor*[^{\h^{-1}}]{c}{_{\y}}\otimes_{\C} {a'}\op_{\g}\ast {a'}\op_{\h}\\
	& = & a_{\x}a_{\h}u_{\h^{-1}}c_{\y}u^{-1}_{\h^{-1}}\otimes_{\C} {a'}\op_{\g}\ast {a'}\op_{\h};
\end{array}	
	\]
\endgroup
\begingroup
\allowdisplaybreaks
	but
	$a_{\h}\in A_{\h}$ and $u^{-1}_{\h^{-1}}$ is an invertible homogeneous element of $A_{\h}$, thus there exists some $b\in B$ such that $a_{\h}=bu^{-1}_{\h^{-1}}$, hence
	\[
\begin{array}{rcl}
	(a_{\x}\otimes_{\C} c_{\y}{a'}\op_{\g})(a_{\z}\otimes_{\C} c_{\t}{a'}\op_{\h}) & = & a_{\x}bu^{-1}_{\h^{-1}}u_{\h^{-1}}c_{\y}u^{-1}_{h^{-1}}\otimes_{\C} {a'}\op_{\g}\ast {a'}\op_{\h}\\
	& = & a_{\x}bc_{\y}u^{-1}_{\h^{-1}}\otimes_{\C} {a'}\op_{\g}\ast {a'}\op_{\h}\\
	& = & a_{\x}c_{\y}bu^{-1}_{\h^{-1}}\otimes_{\C} {a'}\op_{\g}\ast {a'}\op_{\h}\\
	& = & a_{\g}bu^{-1}_{\h^{-1}}\otimes_{\C} {a'}\op_{\g}\ast {a'}\op_{\h}\\
	& = & a_{\g}a_{\h}\otimes_{\C} {a'}\op_{\g}\ast {a'}\op_{\h}.
\end{array}	
	\]
\endgroup
	Furthermore,  $\Delta^{\C}$ is an $\O$-algebra, because the map
	\[\varphi:C_1\to Z(\Delta^{\C}),\,\varphi(c):=\zeta(c)\otimes_{\C} 1 = 1\otimes_{\C} \zeta'(c),\]
 is a ring homomorphism. Indeed, we check here only the multiplicative property; if $c,c'\in \C$, then
	\[\begin{array}{rcl}
	\varphi(c)\varphi(c')&=&(\zeta(c)\otimes_{\C} 1)(\zeta(c')\otimes_{\C} 1)\\
	&=&\zeta(c)\zeta(c')\otimes_{\C} 1\\
	&=&\varphi(cc').\\
\end{array}		\]

2) In order to prove that $A\otimes_{\C} {A'}\op$ is a right $\Delta^{\C}$-module, we will define the following scalar multiplication, for every $\g,\bar{x},\bar{y}\in\oG$, $a_{\g}\in A_{\g},\,a_{\bar{x}}\in A_{\bar{x}},\,{a'}\op_{\g}\in {A'}\op_{\g},\,{a'}\op_{\bar{y}}\in {A'}\op_{\bar{y}}$:
\[
(a_{\bar{x}}\otimes_{\C} {a'}\op_{\bar{y}})(a_{\bar{g}}\otimes_{\C} {a'}\op_{\bar{g}}) := a_{\bar{x}} a_{\bar{g}}\otimes_{\C} {a'}\op_{\bar{y}} \ast {a'}\op_{\bar{g}},
\]
and we check that it is well defined. Firstly, we check that the operation is $\C$-balanced in the first factor. Let $\g,\bar{x},\bar{y}\in\oG$, $a_{\g}\in A_{\g},\,a_{\bar{x}}\in A_{\bar{x}},\,{a'}\op_{\g}\in {A'}\op_{\g},\,{a'}\op_{\bar{y}}\in {A'}\op_{\bar{y}},\,c\in \C$. On one hand we have:
	\[
	\begin{array}{rcl}
		(a_{\bar{x}}c\otimes_{\C} {a'}\op_{\bar{y}})(a_{\bar{g}}\otimes_{\C} {a'}\op_{\bar{g}}) & = & a_{\bar{x}}c a_{\bar{g}}\otimes_{\C} {a'}\op_{\bar{y}} \ast {a'}\op_{\bar{g}}\\
		& = & a_{\bar{x}}c a_{\bar{g}}\otimes_{\C} {a'}\op_{\bar{g}} {a'}\op_{\bar{y}}, \\
	\end{array}		
	\]
	and on the other hand we have:
	\[
	\begin{array}{rcl}
		(a_{\bar{x}}\otimes_{\C} c{a'}\op_{\bar{y}})(a_{\bar{g}}\otimes_{\C} {a'}\op_{\bar{g}}) & = & a_{\bar{x}} a_{\bar{g}}\otimes_{\C} (c {a'}\op_{\bar{y}}) \ast {a'}\op_{\bar{g}}\\
		& = & a_{\bar{x}} a_{\bar{g}}\otimes_{\C} {a'}\op_{\bar{g}}( c {a'}\op_{\bar{y}}) \\
		& = & a_{\bar{x}} a_{\bar{g}}\otimes_{\C} ({a'}\op_{\bar{g}} c ){a'}\op_{\bar{y}} \\
	\end{array}		
	\]
	and following a similar argument as in the proof of Lemma \ref{lemma:Delta-Algebra} 1), we have that:
	\[
	\begin{array}{rcl}
		(a_{\bar{x}}\otimes_{\C} c{a'}\op_{\bar{y}})(a_{\bar{g}}\otimes_{\C} {a'}\op_{\bar{g}}) & = & a_{\bar{x}} a_{\bar{g}}\otimes_{\C} ({a'}\op_{\bar{g}} c ){a'}\op_{\bar{y}} \\
		& = & a_{\bar{x}} a_{\bar{g}}\otimes_{\C} \tensor*[^{\g^{-1}}]{c}{} {a'}\op_{\bar{g}}  {a'}\op_{\bar{y}} \\
		& = & a_{\bar{x}} a_{\bar{g}} \tensor*[^{\g^{-1}}]{c}{} \otimes_{\C}  {a'}\op_{\bar{g}}  {a'}\op_{\bar{y}} \\
	\end{array}	\]
	which, in a similar way, is equal to $a_{\bar{x}}c a_{\bar{g}}\otimes_{\C} {a'}\op_{\bar{g}} {a'}\op_{\bar{y}}$. Secondly, we check that the operation is $\C$-balanced in the second factor. Consider $\g,\z,\t\in\oG$ and $a_{\g}\in A_{\g}$, $a_{\z}\in A_{\z}$, ${a'}\op_{\g}\in {A'}\op_{\g}$, ${a'}\op_{\t}\in {A'}\op_{\t}$, $c\in \C$ and some $\x,\y\in\oG$ such that $\x\y=\g$ and $a_{\x}\in A_{\x}$, $c_{\y}\in\C_{\y}$ such that $a_{\g}=a_{\x}c_{\y}$ therefore $a_{\g}\otimes_{\C} {a'}\op_{\g} = a_{\x}\otimes_{\C} c_{\y}{a'}\op_{\g}$. We have:
	\[
\begin{array}{rcl}
	(a_{\z}\otimes_{\C} {a'}\op_{\t})(a_{\g}\otimes_{\C} {a'}\op_{\g})=a_{\z}a_{\g}\otimes_{\C} {a'}\op_{\t}\ast {a'}\op_{\g},
\end{array}	
	\]
and
	\[
\begin{array}{rcl}
	(a_{\z}\otimes_{\C} {a'}\op_{\t})(a_{\x}\otimes_{\C} c_{\y}{a'}\op_{\g}) & = & a_{\z}a_{\x}\otimes_{\C} {a'}\op_{\t}\ast (c_{\y}{a'}\op_{\g})\\
	& = & a_{\z}a_{\x}\otimes_{\C} c_{\y}{a'}\op_{\g}{a'}\op_{\t}\\
	& = & a_{\z}a_{\x}c_{\y}\otimes_{\C} {a'}\op_{\g}{a'}\op_{\t}\\
	& = & a_{\z}a_{\x}c_{\y}\otimes_{\C} {a'}\op_{\t}\ast {a'}\op_{\g}\\
	& = & a_{\z}a_{\g}\otimes_{\C} {a'}\op_{\t}\ast {a'}\op_{\g}.
\end{array}	
	\]
	The remaining axioms are easy to verify, hence $A\otimes_{\C} {A'}\op$ is a right $\Delta^{\C}$-module.

	It remains to prove that $A\otimes_{\C} {A'}\op$ is an $\oG$-graded $(A,A')$-bimodule over $\C$. First,  we prove that $A\otimes_{\C} {A'}\op$ is  a left  $\para{A\otimes_{\O} {A'}\op}$-module. We define the multiplication as follows: for every $\g,\,\h,\,\i,\,\j\in\oG$ and for every $a_{\g}\in A_{\g}$, $a_{\i}\in A_{\i}$, ${a'}\op_{\h}\in {A'}\op_{\h}$, ${a'}\op_{\j}\in {A'}\op_{\j}$ we have
	\[\para{a_{\g}\otimes_{\O}{a'}\op_{\h}}\para{a_{\i}\otimes_{\C} {a'}\op_{\j}}:= a_{\g}a_{\i}\otimes_{\C} {a'}\op_{\h}\ast {a'}\op_{\j}.\]
	This is clearly well-defined, and  we will only verify the multiplicative axiom. Consider $\g,\,\h,\,\i,\,\j,\,\x,\,\y\in\oG$ and $a_{\g}\in A_{\g}$, $a_{\i}\in A_{\i}$, $a_{\x}\in A_{\x}$, ${a'}\op_{\h}\in {A'}\op_{\h}$, ${a'}\op_{\j}\in {A'}\op_{\j}$, ${a'}\op_{\y}\in {A'}\op_{\y}$. On one hand we have:
	\begin{align*}
	\para{a_{\g}\otimes_{\O}{a'}\op_{\h}}&\para{\para{a_{\x}\otimes_{\O}{a'}\op_{\y}}\para{a_{\i}\otimes_{\C}{a'}\op_{\j}}} \\
	=&  \para{a_{\g}\otimes_{\O}{a'}\op_{\h}}\para{a_{\x}a_{\i}\otimes_{\C}{a'}\op_{\y}\ast {a'}\op_{\j}}\\
	=& a_{\g}a_{\x}a_{\i}\otimes_{\C} {a'}\op_{\h}\ast {a'}\op_{\y}\ast {a'}\op_{\j}
	\end{align*}		
and on the other hand we have:

	\begingroup
	\allowdisplaybreaks
	\begin{align*}
	\left(\para{a_{\g}\otimes_{\O}{a'}\op_{\h}}\para{a_{\x}\otimes_{\O}{a'}\op_{\y}}\right)&\para{a_{\i}\otimes_{\C}{a'}\op_{\j}} \\
	=&  \para{a_{\g}a_{\x}\otimes_{\O}{a'}\op_{\h}\ast {a'}\op_{\y}}\para{a_{\i}\otimes_{\C}{a'}\op_{\j}}\\
	=& a_{\g}a_{\x}a_{\i}\otimes_{\C} {a'}\op_{\h}\ast {a'}\op_{\y}\ast {a'}\op_{\j}
	\end{align*}
	\endgroup
	Second, it is clear that $A\otimes_{\C} {A'}\op$ an $\oG$-graded $(A,A')$-bimodule with the $\bar x$-component defined by:
	\[(A\otimes_{\C} {A'}\op)_{\bar x}=\set{\sum_{\bar g\bar g'^{-1}=\bar {x}}{a_{\bar g}\otimes_{\C}{a'}\op_{\bar g'}}\mid a_{\bar g}\in A_{\bar g},\,{a'}\op_{\bar g'}\in {A'}\op_{\bar g'}}.\]
We check that for any $\g,\bar x,\h\in\oG$ we have:
	\[A_{\g}(A\otimes_{\C} {A'}\op)_{\bar x}A'_{\h}\subseteq (A\otimes_{\C} {A'}\op)_{\g\bar x\h}.\]
	Indeed, let $\g,\bar x,\h,\bar i,\bar j\in\oG$ such that $\bar i\bar j^{-1}=\bar x$ and $a_{\bar g}\in A_{\bar {g}}$, $a_{\bar i}\in A_{\bar {i}}$, ${a'}\op_{\bar j}\in {A'}\op_{\bar {j}}$ and $a'_{\bar h}\in A'_{\bar {h}}$. We have
	\[a_{\g}(a_{\bar i}\otimes_{\C}  {a'}\op_{\bar j})a'_{\h}=a_{\g}a_{\bar i}\otimes_{\C} {a'}\op_{\h^{-1}} \ast {a'}\op_{\bar j} ,\]
	where $a_{\g}a_{\bar i}\in A_{\g}A_{\i}\subseteq A_{\g\bar i}$, ${a'}\op_{\h^{-1}} \ast {a'}\op_{\bar j} \in {A'}\op_{\h^{-1}}{A'}\op_{\j}\subseteq {A'}\op_{\h^{-1}\j}$ and $(\g\bar i)(\h^{-1}\j)^{-1} =\g\bar i \j^{-1}\h=\g\bar x\h$, hence $a_{\g}a_{\bar i}\otimes_{\C} {a'}\op_{\h^{-1}} \ast {a'}\op_{\bar j}\in(A\otimes_{\C} {A'}\op)_{\g\bar x\h} $.
	
 Finally, we show that for any $a_{\g}\in A_{\g}$, ${a'}\op_{\h}\in {A'}\op_{\h}$ and $c\in \C$, we have:
	\[(a_{\g}\otimes_{\C} {a'}\op_{\h})c=\tensor[^{\g\h^{-1}}]{c}{}(a_{\g}\otimes_{\C} {a'}\op_{\h}).\]
	Indeed,  we have
	\[		\begin{array}{rcl}
			(a_{\g}\otimes_{\C} {a'}\op_{\h})c & = & a_{\g}\otimes_{\C} c*{a'}\op_{\h}\\
			& = & a_{\g}\otimes_{\C} {a'}\op_{\h} c\\
			& = & a_{\g}\otimes_{\C} \tensor[^{\h^{-1}}]{c}{}{a'}\op_{\h} \\
			& = & a_{\g} \tensor[^{\h^{-1}}]{c}{}\otimes_{\C} {a'}\op_{\h} \\
			& = &  \tensor[^{\g\h^{-1}}]{c}{} a_{\g}\otimes_{\C} {a'}\op_{\h} \\
			& = & \tensor[^{\g\h^{-1}}]{c}{}(a_{\g}\otimes_{\C} {a'}\op_{\h}), \\
		\end{array}	
	\]
hence the claim is proved.
\end{proof}

\begin{remark} \label{remark:o_z_c}
\begingroup
\allowdisplaybreaks
If $M$ is a $\Delta^{\C}$-module, we intend to prove that $(A\otimes_{\C} {A'}\op)\otimes_{\Delta^{\C}} M$ is a $\oG$-graded $(A,A')$-bimodule over $\C$. This is suggested by the following observation. Consider the algebras
\[\Delta^{\O}:=\bigoplus_{\g\in\oG}A_{\g}\otimes_{\O}{A'}\op_{\g},\]
which is a $\oG$-graded $\O$-algebra \cite[1.6.1]{book:Marcus1999} and
\[\Delta^{\Z}:=\bigoplus_{\g\in\oG}A_{\g}\otimes_{\Z}{A'}\op_{\g},\]
which is a $\oG$-graded $\O$-algebra of $A$. 
We clearly have the following surjective algebra homomorphism:
\[
\Delta^{\O}\longrightarrow \Delta^{\Z} \longrightarrow \Delta^{\C}.
\]
Thus, we have the following homomorphism
\[
(A\otimes_{\O} {A'}\op)\otimes_{\Delta^{\O}} M \longrightarrow (A\otimes_{\Z} {A'}\op)\otimes_{\Delta^{\Z}} M \longrightarrow (A\otimes_{\C} {A'}\op)\otimes_{\Delta^{\C}} M,
\]
 of abelian groups, hence we obtain the $(A,A')$-bimodule structure on $(A\otimes_{\C} {A'}\op)\otimes_{\Delta^{\C}} M$.
\endgroup
\end{remark}

\begin{theorem} \label{th:equivalent_categories}
The category of $\Delta^{\C}$-modules and the category of $\oG$-graded $(A,A')$-bimodules over $\C$ are equivalent:
\[\xymatrix@C+=4cm{
\Delta^{\C}\Md \ar@<+.5ex>@{->}[r]^{\left(A\otimes_{\C} {A'}\op\right)\otimes_{\Delta^{\C}}-} & \ar@<+.5ex>@{->}[l]^{\left(-\right)_1} A\textrm{-}\mathrm{Gr}/\C{\textrm-}A'
}\]
\end{theorem}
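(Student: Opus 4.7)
The strategy is to construct natural isomorphisms showing that the two displayed functors are quasi-inverse.

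First, I verify well-definedness of both functors. By Lemma \ref{lemma:Delta-Algebra}, $A\otimes_{\C}{A'}\op$ is simultaneously a right $\Delta^{\C}$-module and a $\bar G$-graded $(A,A')$-bimodule over $\C$, so for any $\Delta^{\C}$-module $M$, the tensor product $(A\otimes_{\C}{A'}\op)\otimes_{\Delta^{\C}}M$ inherits the structure of a $\bar G$-graded $(A,A')$-bimodule over $\C$, with the grading coming from the first factor. Conversely, given $\tilde M\in A\textrm{-}\mathrm{Gr}/\C{\textrm-}A'$, I equip $\tilde M_1$ with a left $\Delta^{\C}$-module structure via $(a_{\bar g}\otimes_{\C}{a'}\op_{\bar g})\cdot m := a_{\bar g}\cdot m\cdot{a'}\op_{\bar g}$ for $m\in \tilde M_1$, where ${a'}\op_{\bar g}\in A'_{\bar g^{-1}}$ acts from the right. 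The result lies in $A_{\bar g}\cdot \tilde M_1\cdot A'_{\bar g^{-1}}\subseteq\tilde M_{\bar g\cdot 1\cdot\bar g^{-1}}=\tilde M_1$, and the $\C$-balancing of this action is precisely condition (3) of Definition \ref{def:graded_bimodules_over_c}.

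Second, I construct the unit $\eta$ and counit $\varepsilon$. Since the grading of $(A\otimes_{\C}{A'}\op)\otimes_{\Delta^{\C}}M$ is inherited from the first factor, and $(A\otimes_{\C}{A'}\op)_1=\bigoplus_{\bar g}A_{\bar g}\otimes_{\C}{A'}\op_{\bar g}=\Delta^{\C}$, the identity component of $(A\otimes_{\C}{A'}\op)\otimes_{\Delta^{\C}}M$ is canonically $\Delta^{\C}\otimes_{\Delta^{\C}}M\cong M$, yielding the natural isomorphism $\eta_M$. For the counit, define
\[\varepsilon_{\tilde M}\colon(A\otimes_{\C}{A'}\op)\otimes_{\Delta^{\C}}\tilde M_1\longrightarrow\tilde M,\qquad (a_{\bar g}\otimes_{\C}{a'}\op_{\bar g'})\otimes m\longmapsto a_{\bar g}\cdot m\cdot{a'}\op_{\bar g'}.\]
Well-definedness (both $\C$-balancing of $\otimes_{\C}$ and $\Delta^{\C}$-balancing of $\otimes_{\Delta^{\C}}$) follows directly from Definition \ref{def:graded_bimodules_over_c}(3) and the construction of the $\Delta^{\C}$-action on $\tilde M_1$; the map is an $(A,A')$-bimodule homomorphism that respects the grading, since the $\bar x$-summand of the source is mapped into $A_{\bar g}\tilde M_1 A'_{\bar g'^{-1}}\subseteq\tilde M_{\bar g\bar g'^{-1}}=\tilde M_{\bar x}$.

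The main technical step is to prove that $\varepsilon_{\tilde M}$ is bijective on each graded component. By the Remark following Definition \ref{def:graded_bimodules_over_c} and the crossed product hypothesis, $\tilde M\cong A\otimes_B\tilde M_1$ as $\bar G$-graded $(A,A')$-bimodules, so $\tilde M_{\bar x}=u_{\bar x}\cdot\tilde M_1$. On the source, the $\bar x$-component decomposes as $\bigoplus_{\bar g\bar g'^{-1}=\bar x}(A_{\bar g}\otimes_{\C}{A'}\op_{\bar g'})\otimes_{\Delta^{\C}}\tilde M_1$; using the invertible homogeneous element $u'_{\bar g'}\in A'_{\bar g'}$, every simple tensor in such a summand can be brought to normal form by inserting $u'_{\bar g'^{-1}}u'_{\bar g'}=1$ across $\otimes_{\C}$ and absorbing the factors via the $\Delta^{\C}$-action together with condition (3), collapsing everything onto a single canonical copy of $A_{\bar x}\otimes_B\tilde M_1\cong\tilde M_{\bar x}$. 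An explicit inverse to $\varepsilon_{\tilde M}$ can therefore be read off component-wise from this identification, and naturality of $\eta$ and $\varepsilon$ is then formal. The triangle identities close the argument, completing the equivalence.
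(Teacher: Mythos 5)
Your overall strategy coincides with the paper's: establish the two functors, check both directions of well-definedness (that $(A\otimes_{\C}{A'}\op)\otimes_{\Delta^{\C}}M$ is a $\bar G$-graded bimodule over $\C$, and that $\tilde M_1$ carries a $\Delta^{\C}$-action via $(a_{\bar g}\otimes_{\C}{a'}\op_{\bar g})\cdot m = a_{\bar g}m a'_{\bar g^{-1}}$), and then show the unit and counit are natural isomorphisms. Where you diverge is the crux: proving that the counit $\varepsilon_{\tilde M}$ is bijective. The paper does not argue component-by-component at all; instead it observes that $\Phi=\varepsilon_{\tilde M}$ is in particular a morphism of $\bar G$-graded $A$-modules, computes that its $1$-component is the canonical isomorphism $\Delta^{\C}\otimes_{\Delta^{\C}}\tilde M_1\to\tilde M_1$, and then invokes Dade's theorem (\cite[Theorem 2.8, Corollary 2.10]{article:Dade1980}): for a strongly $\bar G$-graded algebra, a graded module homomorphism whose $1$-component is an isomorphism is itself an isomorphism. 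This makes the whole bijectivity question trivial once the $1$-component is understood.

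Your proposed ``normal form'' argument is a legitimate alternative idea, but as written it has two soft spots. First, the $\bar x$-component $(A\otimes_{\C}{A'}\op)_{\bar x}$ is a \emph{sum}, not a direct sum, of the images of $A_{\bar g}\otimes_{\C}{A'}\op_{\bar g'}$ over $\bar g\bar g'^{-1}=\bar x$: the $\C$-balancing relations glue the pieces for different $(\bar g,\bar g')$ inside the same $\bar x$-component, so the $\bigoplus$ in your proposal should be $\sum$, and one cannot simply work summand-by-summand. Second, the key manipulation is not an insertion ``across $\otimes_{\C}$'' of $u'_{\bar g'^{-1}}u'_{\bar g'}\in B'$ (an element of $B'$, not of $\C$, cannot be slid through $\otimes_{\C}$); what is actually available is the $\Delta^{\C}$-balance of $\otimes_{\Delta^{\C}}$, using the invertible homogeneous element $u_{\bar g'^{-1}}\otimes_{\C}{u'}\op_{\bar g'^{-1}}$ of $\Delta^{\C}$ to transport the tensor into $A_{\bar x}\otimes_{\C}B'\otimes_{\Delta^{\C}}\tilde M_1$. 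That yields surjectivity of $\varepsilon_{\tilde M}$ on each component, but injectivity (equivalently, well-definedness of the putative ``explicit inverse'') still requires an argument, precisely because of the non-trivial relations in the tensor product that the summand picture obscures. Filling this in is possible, but it is exactly the work that citing Dade's theorem after computing $\Phi_1$ lets the paper avoid. If you want a self-contained proof along your lines, write the inverse explicitly as $\tilde m_{\bar x}\mapsto(u_{\bar x}\otimes_{\C}1)\otimes_{\Delta^{\C}}u_{\bar x}^{-1}\tilde m_{\bar x}$ and verify it is well-defined and two-sided inverse to $\varepsilon_{\tilde M}$; otherwise, quoting Dade's graded Morita theorem is the cleaner route.
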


\begin{proof} \textit{\textbf{Step 1.}} Let $M$ be a $\Delta^{\C}$-module. We  prove that $(A\otimes_{\C} {A'}\op)\otimes_{\Delta^{\C}} M$ is a $\oG$-graded $(A,A')$-bimodule over $\C$.  We define on $(A\otimes_{\C} {A'}\op)\otimes_{\Delta^{\C}} M$ a left $A$-module structure, as follows:
\[a_{\g}\para{\para{a_{\h}\otimes_{\C} {a'}\op_{\bar{k}}}\otimes_{\Delta^{\C}}m}:=\para{a_{\g}a_{\h}\otimes_{\C} {a'}\op_{\bar{k}}}\otimes_{\Delta^{\C}}m;\]
for all $\g,\h,\bar{k}\in \oG$, $a_{\g}\in A_{\g}$, $a_{\h}\in A_{\h}$, ${a'}\op_{\bar{k}}\in {A'}\op_{\bar{k}}$, $m\in M$. We also give a right  $A'$-module structure, by defining
\[\para{\para{a_{\h}\otimes_{\C} {a'}\op_{\bar{k}}}\otimes_{\Delta^{\C}}m}a'_{\g}:=\para{a_{\h}\otimes_{\C} {a'}\op_{\g^{-1}}  \ast {a'}\op_{\bar{k}}}\otimes_{\Delta^{\C}}m,\]
for all $\g,\h,\bar{k}\in \oG$, $a'_{\g}\in A'_{\g},a_{\h}\in A_{\h}$, ${a'}\op_{\bar{k}}\in {A'}\op_{\bar{k}}$, $m\in M$.
We verify the $\Delta^{\C}$-balance property. Let $\g,\h,\bar{k},\bar{x} \in \oG$, $ a_{\g}\in A_{\g},a_{\h}\in A_{\h},a_{\bar{x}}\in A_{\bar{x}},{a'}\op_{\bar{k}}\in {A'}\op_{\bar{k}},{a'}\op_{\bar{x}}\in {A'}\op_{\bar{x}},a'_{\g}\in A'_{\g},m\in M$. We have:
\begin{align*}
a_{\g}\para{\para{a_{\h}\otimes_{\C} {a'}\op_{\bar{k}}}\para{a_{\bar{x}}\otimes_{\C} {a'}\op_{\bar{x}}}\otimes_{\Delta^{\C}}m} & =  a_{\g}\para{\para{a_{\h}a_{\bar{x}}\otimes_{\C} {a'}\op_{\bar{k}}\ast {a'}\op_{\bar{x}}}\otimes_{\Delta^{\C}}m}\\
 &=  \para{a_{\g}a_{\h}a_{\bar{x}}\otimes_{\C} {a'}\op_{\bar{k}}\ast {a'}\op_{\bar{x}}}\otimes_{\Delta^{\C}}m
\end{align*}
and
\begin{align*}
a_{\g}\para{\para{a_{\h}\otimes_{\C} {a'}\op_{\bar{k}}}\otimes_{\Delta^{\C}} \para{a_{\bar{x}}\otimes_{\C} {a'}\op_{\bar{x}}}m} & = \para{a_{\g}a_{\h}\otimes_{\C} {a'}\op_{\bar{k}}}\otimes_{\Delta^{\C}} \para{a_{\bar{x}}\otimes_{\C} {a'}\op_{\bar{x}}}m\\
& = \para{a_{\g}a_{\h}\otimes_{\C} {a'}\op_{\bar{k}}}\para{a_{\bar{x}}\otimes_{\C} {a'}\op_{\bar{x}}}\otimes_{\Delta^{\C}} m\\
 &=  \para{a_{\g}a_{\h}a_{\bar{x}}\otimes_{\C} {a'}\op_{\bar{k}}\ast {a'}\op_{\bar{x}}}\otimes_{\Delta^{\C}}m.
\end{align*}
Moreover,
\begingroup
\allowdisplaybreaks
\begin{align*}
	\para{\para{\para{a_{\h}\otimes_{\C} {a'}\op_{\bar{k}}}\para{a_{\bar{x}}\otimes_{\C} {a'}\op_{\bar{x}}}}\otimes_{\Delta^{\C}}m}a'_{\g} &= \para{\para{\para{a_{\h}a_{\bar{x}}\otimes_{\C} {a'}\op_{\bar{k}}\ast {a'}\op_{\bar{x}}}}\otimes_{\Delta^{\C}}m}a'_{\g}\\
	&= \para{\para{a_{\h}a_{\bar{x}}\otimes_{\C} {a'}\op_{\g^{-1}}\ast {a'}\op_{\bar{k}}\ast {a'}\op_{\bar{x}}}}\otimes_{\Delta^{\C}}m
\end{align*}
\endgroup
and
\begin{align*}
	\para{\para{a_{\h}\otimes_{\C} {a'}\op_{\bar{k}}}\otimes_{\Delta^{\C}}\para{a_{\bar{x}}\otimes_{\C} {a'}\op_{\bar{x}}}m}a'_{\g}
	&= \para{a_{\h}\otimes_{\C} {a'}\op_{\g^{-1}}\ast {a'}\op_{\bar{k}}}\otimes_{\Delta^{\C}}\para{a_{\bar{x}}\otimes_{\C} {a'}\op_{\bar{x}}}m \\
	&= \para{a_{\h}\otimes_{\C} {a'}\op_{\g^{-1}}\ast {a'}\op_{\bar{k}}}\para{a_{\bar{x}}\otimes_{\C} {a'}\op_{\bar{x}}}\otimes_{\Delta^{\C}}m \\
	&=\para{\para{a_{\h}a_{\bar{x}}\otimes_{\C} {a'}\op_{\g^{-1}}\ast {a'}\op_{\bar{k}}\ast {a'}\op_{\bar{x}}}}\otimes_{\Delta^{\C}}m.
\end{align*}
We verify that $(A\otimes_{\C} {A'}\op)\otimes_{\Delta^{\C}} M$ is indeed a left $A$-module and a right $A'$-module. Here we only verify the multiplicative axiom. Let $\g,\h,\bar{k},\bar{x} \in \oG$, $ a_{\g}\in A_{\g},a_{\h}\in A_{\h},a_{\bar{x}}\in A_{\bar{x}}$, ${a'}\op_{\bar{k}}\in {A'}\op_{\bar{k}}$, ${a'}\op_{\bar{x}}\in {A'}\op_{\bar{x}}$, $a'_{\g}\in A'_{\g}$, $m\in M$. We have:
\begin{align*}
\para{a_{\g}a_{\bar{x}}}\para{\para{a_{\h}\otimes_{\C} {a'}\op_{\bar{k}}}\otimes_{\Delta^{\C}}m} & = \para{a_{\g}a_{\bar{x}}a_{\h}\otimes_{\C} {a'}\op_{\bar{k}}}\otimes_{\Delta^{\C}}m
\end{align*}
and
\begingroup
\allowdisplaybreaks
\begin{align*}
a_{\g}\para{a_{\bar{x}}\para{\para{a_{\h}\otimes_{\C} {a'}\op_{\bar{k}}}\otimes_{\Delta^{\C}}m}} & = a_{\g}\para{\para{a_{\bar{x}}a_{\h}\otimes_{\C} {a'}\op_{\bar{k}}}\otimes_{\Delta^{\C}}m}\\
& = \para{a_{\g}a_{\bar{x}}a_{\h}\otimes_{\C} {a'}\op_{\bar{k}}}\otimes_{\Delta^{\C}}m.
\end{align*}
\endgroup
Furthermore,
\begin{align*}
\para{\para{a_{\h}\otimes_{\C} {a'}\op_{\bar{k}}}\otimes_{\Delta^{\C}}m}\para{a'_{\g}a'_{\bar{x}}} & = \para{\para{a_{\h}\otimes_{\C} \para{a'_{\g}a'_{\bar{x}}}\op \ast {a'}\op_{\bar{k}}}\otimes_{\Delta^{\C}}m}\\
& = \para{\para{a_{\h}\otimes_{\C} {a'}\op_{\bar{x}^{-1}}\ast {a'}\op_{\g^{-1}} \ast {a'}\op_{\bar{k}}}\otimes_{\Delta^{\C}}m}
\end{align*}
and
\begin{align*}
\para{\para{\para{a_{\h}\otimes_{\C} {a'}\op_{\bar{k}}}\otimes_{\Delta^{\C}}m}a'_{\g}}a'_{\bar{x}} & = \para{\para{a_{\h}\otimes_{\C} {a'}\op_{\g^{-1}}\ast {a'}\op_{\bar{k}}}\otimes_{\Delta^{\C}}m}a'_{\bar{x}}\\
& = \para{\para{a_{\h}\otimes_{\C} {a'}\op_{\bar{x}^{-1}}\ast {a'}\op_{\g^{-1}} \ast {a'}\op_{\bar{k}}}\otimes_{\Delta^{\C}}m}.
\end{align*}
We verify that $(A\otimes_{\C} {A'}\op)\otimes_{\Delta^{\C}} M$ is indeed an $(A,A')$-bimodule; more exactly, we  verify that
\[\para{a_{\g}\para{\para{a_{\h}\otimes_{\C} {a'}\op_{\bar{k}}}\otimes_{\Delta^{\C}}m}}a'_{\bar{x}}=a_{\g}\para{\para{\para{a_{\h}\otimes_{\C} {a'}\op_{\bar{k}}}\otimes_{\Delta^{\C}}m}a'_{\bar{x}}},\]
for all $\g,\h,\bar{k},\bar{x} \in \oG$, $ a_{\g}\in A_{\g},a_{\h}\in A_{\h},{a'}\op_{\bar{k}}\in {A'}\op_{\bar{k}},a'_{\g}\in A'_{\g},a'_{\bar{x}}\in A'_{\bar{x}},m\in M$. Indeed, we have:
\begin{align*}
\para{a_{\g}\para{\para{a_{\h}\otimes_{\C} {a'}\op_{\bar{k}}}\otimes_{\Delta^{\C}}m}}a'_{\bar{x}} & = \para{\para{a_{\g}a_{\h}\otimes_{\C} {a'}\op_{\bar{k}}}\otimes_{\Delta^{\C}}m}a'_{\bar{x}}\\
& = \para{a_{\g}a_{\h}\otimes_{\C} {a'}\op_{\bar{x}^{-1}}\ast {a'}\op_{\bar{k}}}\otimes_{\Delta^{\C}}m
\end{align*}
and
\begingroup
\allowdisplaybreaks
\begin{align*}
a_{\g}\para{\para{\para{a_{\h}\otimes_{\C} {a'}\op_{\bar{k}}}\otimes_{\Delta^{\C}}m}a'_{\bar{x}}}&=a_{\g}\para{\para{a_{\h}\otimes_{\C} {a'}\op_{\bar{x}^{-1}} \ast {a'}\op_{\bar{k}}}\otimes_{\Delta^{\C}}m}\\
&=\para{a_{\g}a_{\h}\otimes_{\C} {a'}\op_{\bar{x}^{-1}} \ast {a'}\op_{\bar{k}}}\otimes_{\Delta^{\C}}m.
\end{align*}
\endgroup
 We define the $x$-component  of $(A\otimes_{\C} {A'}\op)\otimes_{\Delta^{\C}} M$, for every $x\in\oG$, by
\[\para{(A\otimes_{\C} {A'}\op)\otimes_{\Delta^{\C}} M}_x:=(A\otimes_{\C} {A'}\op)_x\otimes_{\Delta^{\C}} M.\]
We verify that  for all $\g,x,\h\in\oG$
\[A_{\g}\para{(A\otimes_{\C} {A'}\op)\otimes_{\Delta^{\C}} M}_x A'_{\h}\subseteq \para{(A\otimes_{\C} {A'}\op)\otimes_{\Delta^{\C}} M}_{\g x\h}.\]
Indeed, for every $a_{\g}\in A_{\g},a_t\in A_t, m\in M, a'_{t'}\in A'_{t'}, a'_{\h}\in A'_{\h}$, with $t,t'\in \oG$ such that $tt'^{-1}=x$, we have:
\[\begin{array}{rcl}
a_{\g}\para{\para{a_t\otimes_{\C} {a'}\op_{t'}}\otimes_{\Delta^{\C}}m}a'_{\h} & = & \para{\para{a_{\g}a_t\otimes_{\C} {a'}\op_{t'}}\otimes_{\Delta^{\C}}m}a'_{\h}\\
& = & \para{a_{\g}a_t\otimes_{\C} {a'}\op_{\h^{-1}}\ast {a'}\op_{t'}}\otimes_{\Delta^{\C}}m\\
& \in & (A\otimes_{\C} {A'}\op)_{\g t(\h^{-1}t')^{-1}}\otimes_{\Delta^{\C}} M\\
& = & (A\otimes_{\C} {A'}\op)_{\g tt'^{-1}\h}\otimes_{\Delta^{\C}} M\\
& = & (A\otimes_{\C} {A'}\op)_{\g x\h}\otimes_{\Delta^{\C}} M\\
& = & \para{(A\otimes_{\C} {A'}\op)\otimes_{\Delta^{\C}} M}_{\g x\h}.
\end{array}
\]
We verify that $(A\otimes_{\C} {A'}\op)\otimes_{\Delta^{\C}} M$ is an $(A,A')$-bimodule over $\C$.  Indeed, for every $\g,t,t',\h\in \oG$ such that $tt'^{-1}=\g$ and for every $a_t\in A_t, m\in M, a'_{t'}\in A'_{t'},c_{\h}\in\C_h$ we have:
\[
\begin{array}{rcl}
\para{\para{a_t\otimes_{\C} {a'}\op_{t'}}\otimes_{\Delta^{\C}}m}c_{\h} & = & \para{a_t\otimes_{\C} c\op_{\h^{-1}} \ast {a'}\op_{t'}}\otimes_{\Delta^{\C}}m\\
\end{array}
\]
and
\[
\begin{array}{rcl}
\tensor*[^{\g}]{c}{_{\h}}\para{\para{a_t\otimes_{\C} {a'}\op_{t'}}\otimes_{\Delta^{\C}}m} & = & \para{\tensor*[^{\g}]{c}{_{\h}}a_t\otimes_{\C} {a'}\op_{t'}}\otimes_{\Delta^{\C}}m\\
& = & \para{u_{\g}c_{\h}u^{-1}_{\g} a_t\otimes_{\C} {a'}\op_{t'}}\otimes_{\Delta^{\C}}m.
\end{array}
\]
Now consider some invertible homogeneous elements $u_{t'^{-1}}$ and $u'_{t'^{-1}}$ in the components $A_{t'^{-1}}$ and $A'_{t'^{-1}}$, respectively. We have:
\[
\begin{array}{rcl}
\tensor*[^{\g}]{c}{_{\h}}\para{\para{a_t\otimes_{\C} {a'}\op_{t'}}\otimes_{\Delta^{\C}}m} & = & \para{u_{\g}c_{\h}u^{-1}_{\g} a_t\otimes_{\C} {a'}\op_{t'}}\otimes_{\Delta^{\C}}m\\
& = & \para{u_{\g}c_{\h}u^{-1}_{\g} a_tu_{t'^{-1}}u^{-1}_{t'^{-1}}\otimes_{\C} {a'}\op_{t'}}\otimes_{\Delta^{\C}}m;\\
\end{array}
\]
but $a_tu_{t'^{-1}}\in A_tA_{t'^{-1}}\subseteq A_{tt'^{-1}}=A_{\g}$, so $u^{-1}_{\g} a_tu_{t'^{-1}}\in B$, but $c_{\h}\in C_A(B)$, thus
\[
\begin{array}{rcl}
\tensor*[^{\g}]{c}{_{\h}}\para{\para{a_t\otimes_{\C} {a'}\op_{t'}}\otimes_{\Delta^{\C}}m} & = & \para{u_{\g}u^{-1}_{\g} a_tu_{t'^{-1}}c_{\h}u^{-1}_{t'^{-1}}\otimes_{\C} {a'}\op_{t'}}\otimes_{\Delta^{\C}}m\\
& = & \para{a_tu_{t'^{-1}}c_{\h}u^{-1}_{t'^{-1}}\otimes_{\C} {a'}\op_{t'}}\otimes_{\Delta^{\C}}m\\
& = & \para{a_t \tensor*[^{t'^{-1}}]{c}{_{\h}}\otimes_{\C} {a'}\op_{t'}}\otimes_{\Delta^{\C}}m;\\
\end{array}
\]
but since $\tensor*[^{t'^{-1}}]{c}{_{\h}}\in \C$, we have
\begingroup
\allowdisplaybreaks
\[
\begin{array}{rcl}
\tensor*[^{\g}]{c}{_{\h}}\para{\para{a_t\otimes_{\C} {a'}\op_{t'}}\otimes_{\Delta^{\C}}m} & = & \para{a_t \otimes_{\C} \tensor*[^{t'^{-1}}]{c}{_{\h}}{a'}\op_{t'}}\otimes_{\Delta^{\C}}m\\
& = & \para{a_t \otimes_{\C} u'_{t'^{-1}}c_{\h}u'^{-1}_{t'^{-1}}{a'}\op_{t'}}\otimes_{\Delta^{\C}}m;\\
\end{array}
\]
\endgroup
by using that $u'^{-1}_{t'^{-1}}{a'}\op_{t'}\in B'$ and $c_{\h}\in C_{A'}(B')$, we get
\[
\begin{array}{rcl}
\tensor*[^{\g}]{c}{_{\h}}\para{\para{a_t\otimes_{\C} {a'}\op_{t'}}\otimes_{\Delta^{\C}}m} & = & \para{a_t \otimes_{\C} u'_{t'^{-1}}u'^{-1}_{t'^{-1}}{a'}\op_{t'}c_{\h}}\otimes_{\Delta^{\C}}m\\
& = & \para{a_t \otimes_{\C} {a'}\op_{t'}c_{\h}}\otimes_{\Delta^{\C}}m\\
& = & \para{a_t \otimes_{\C} c\op_{\h^{-1}}\ast {a'}\op_{t'}}\otimes_{\Delta^{\C}}m.\\
\end{array}
\]
\textit{\textbf{Step 2.}} Let $\tilde{M}$ be a $\oG$-graded $(A,A')$-bimodule over $\C$, and denote its 1-component by $M:=\tilde{M}_1$. We  prove that $M$ is a $\Delta^{\C}$-module. Let $a_{\g}\in A_{\g}$, ${a'}\op_{\g}\in {A'}\op_{\g}$ and $m\in M$. We define
\[\para{a_{\g}\otimes_{\C} {a'}\op_{\g}}m:=a_{\g}ma'_{\g^{-1}}.\]
To see that this is well-defined, consider $\g\in\oG$ and $a_{\g}\in A_{\g}$, ${a'}\op_{\g}\in {A'}\op_{\g}$ and some $\x,\y\in\oG$ such that $\x\y=\g$ and $a_{\x}\in A_{\x}$, $c_{\y}\in\C_{\y}$ such that $a_{\g}=a_{\x}c_{\y}$ therefore $a_{\g}\otimes_{\C} {a'}\op_{\g} = a_{\x}\otimes_{\C} c_{\y}{a'}\op_{\g}$. We have:
\[
\begin{array}{rcl}
\para{a_{\x}\otimes_{\C} c_{\y}{a'}\op_{\g}}m & = & a_{\x}mc_{\y}{a'}\op_{\g};
\end{array}
\]
but $m\in M$ and $\tilde{M}$ is a $\oG$-graded $(A,A')$-bimodule over $\C$, hence
\[
\begin{array}{rcl}
\para{a_{\x}\otimes_{\C} c_{\y}{a'}\op_{\g}}m & = & a_{\x}c_{\y}m{a'}\op_{\g}\\
 & = & a_{\g}m{a'}\op_{\g}\\
 & = & a_{\g}ma'_{\g^{-1}}.
\end{array}
\]
We verify that $M$ is a $\Delta^{\C}$-module. Consider $\g,\h\in\oG$ and $a_{\g}\in A_{\g}$, $a_{\h}\in A_{\h}$, ${a'}\op_{\g}\in {A'}\op_{\g}$, ${a'}\op_{\h}\in {A'}\op_{\h}$, $m\in M$. We prove that:
\[
\begin{array}{rcl}
	\para{\para{a_{\g}\otimes_{\C} {a'}\op_{\g}}\para{a_{\h}\otimes_{\C} {a'}\op_{\h}}}m = \para{a_{\g}\otimes_{\C} {a'}\op_{\g}}\para{\para{a_{\h}\otimes_{\C} {a'}\op_{\h}}m}.
\end{array}
\]
We have:
\[
\begin{array}{rcl}
	\para{\para{a_{\g}\otimes_{\C} {a'}\op_{\g}}\para{a_{\h}\otimes_{\C} {a'}\op_{\h}}}m & = & \para{a_{\g}a_{\h}\otimes_{\C} {a'}\op_{\g}\ast {a'}\op_{\h}}m\\
	& = & \para{a_{\g}a_{\h}\otimes_{\C} a'_{\h^{-1}}a'_{\g^{-1}}}m\\
	& = & a_{\g}a_{\h}ma'_{\h^{-1}}a'_{\g^{-1}},
\end{array}\]
and
\[\begin{array}{rcl}
	\para{a_{\g}\otimes_{\C} {a'}\op_{\g}}\para{\para{a_{\h}\otimes_{\C} {a'}\op_{\h}}m} & = & \para{a_{\g}\otimes_{\C} {a'}\op_{\g}}\para{a_{\h}ma'_{\h^{-1}}}\\
	& = & a_{\g}a_{\h}ma'_{\h^{-1}}a'_{\g^{-1}}.
\end{array}\]
Therefore, $M$ is a $\Delta^{\C}$-module.

\noindent\textit{\textbf{Step 3.}} We prove that the  functors given in the statement of the theorem are well-defined. Let $\tilde{M},\tilde{N}$ be two $\oG$-graded $(A,A')$-bimodules over $\C$, with 1-components $M:=\tilde{M}_1$ and $N:=\tilde{N}_1$ respectively. Let $\tilde{f}:\tilde{M}\to\tilde{N}$ be a $\oG$-graded $(A,A')$-bimodule homomorphism. Its restriction to $M$ and corestriction to $N$ gives the map
\[(\tilde{f})_1:=f:M\to N.\]
We prove that $f$ is a $\Delta^{\C}$-module homomorphism. We only check the multiplicative property. Let $a_{\g}\in A_{\g}$, ${a'}\op_{\g}\in {A'}\op_{\g}$ and $m\in M$. We have:
\[\begin{array}{rcl}
	f(\para{a_{\g}\otimes_{\C} {a'}\op_{\g}}m) & = & \tilde{f}(\para{a_{\g}\otimes_{\C} {a'}\op_{\g}}m)\\
	& = & \tilde{f}(a_{\g}ma'_{\g^{-1}}),
\end{array}\]
but because $\tilde{f}$ is a $\oG$-graded $(A,A')$-bimodule homomorphism, we obtain that
\[\begin{array}{rcl}
	f(\para{a_{\g}\otimes_{\C} {a'}\op_{\g}}m) & = & a_{\g}\tilde{f}(m)a'_{\g^{-1}}\\
	& = & \para{a_{\g}\otimes_{\C} {a'}\op_{\g}}\tilde{f}(m)\\
	& = & \para{a_{\g}\otimes_{\C} {a'}\op_{\g}}f(m).
\end{array}\]
Now let $M,N$ be two $\Delta^{\C}$-modules, and $f:M\to N$ a $\Delta^{\C}$-module homomorphism. We have that the
\[\begin{array}{l}
\para{A\otimes_{\C} {A'}\op}\otimes_{\Delta^{\C}}f:\para{A\otimes_{\C} {A'}\op}\otimes_{\Delta^{\C}}M\to\para{A\otimes_{\C} {A'}\op}\otimes_{\Delta^{\C}}N,  \\
\para{a\otimes_{\C} {a'}\op}\otimes_{\Delta^{\C}}m \mapsto \para{a\otimes_{\C} {a'}\op}\otimes_{\Delta^{\C}}f(m),
\end{array}\]
 where $a\in A$, ${a'}\op\in {A'}\op$ and $m\in M$, is clearly well-defined. It is straightforward to show that $\para{A\otimes_{\C} {A'}\op}\otimes_{\Delta^{\C}}f$ is a $\oG$-graded $(A,A')$-bimodule homomorphism.

\noindent\textit{\textbf{Step 4.}} We prove that the categories $\Delta^{\C}$-mod and $A\textrm{-}\mathrm{Gr}/\C{\textrm-}A'$ are equivalent. Let $M$ be a $\Delta^{\C}$-module. By Step 1, we know that $(A\otimes_{\C} {A'}\op)\otimes_{\Delta^{\C}} M$ is a $\oG$-graded $(A,A')$-bimodule over $\C$, therefore, by Step 2, we have that $\para{(A\otimes_{\C} {A'}\op)\otimes_{\Delta^{\C}} M}_1$ is a $\Delta^{\C}$-module. It is clear that $\para{(A\otimes_{\C} {A'}\op)\otimes_{\Delta^{\C}} M}_1$ and $M$ are isomorphic as $\Delta^{\C}$-modules.

Let $\tilde{M}$ be a $\oG$-graded $(A,A')$-bimodule over $\C$. We prove that $(A\otimes_{\C} {A'}\op)\otimes_{\Delta^{\C}}\tilde{M}_1$ and $\tilde{M}$ are isomorphic as $\oG$-graded $(A,A')$-bimodules over $\C$. Indeed, by Step 2, we know that $\tilde{M}_1$ is a $\Delta^{\C}$-module. By Step 1, we know that $(A\otimes_{\C} {A'}\op)\otimes_{\Delta^{\C}}\tilde{M}_1$ is a $\oG$-graded $(A,A')$-bimodule over $\C$. Consider now the map
\[
\Phi:(A\otimes_{\C} {A'}\op)\otimes_{\Delta^{\C}}\tilde{M}_1 \to \tilde{M}, \qquad (a_{\g}\otimes_{\C} {a'}\op_{\h})\otimes_{\Delta^{\C}}\tilde{m}_1 \mapsto a_{\g}\tilde{m}_1a'_{\h^{-1}}
\]
for all $\g,\h\in\oG$ and for all $a_{\g}\in A_{\g}$, ${a'}\op_{\h}\in {A'}\op_{\h}$, $\tilde{m}_1\in \tilde{M}_1$. We prove that $\Phi$ is a $\oG$-graded $(A,A')$-bimodule homomorphism. Indeed, we only check the multiplicative  and the grade preservation properties. Let $\x,\y,\g,\h\in \oG$, $a_{\x}\in A_{\x}$, $a_{\g}\in A_{\g}$, ${a'}\op_{\h}\in {A'}\op_{\h}$, $a'_{\y}\in A'_{\y}$, $\tilde{m}_1\in \tilde{M}_1$. We have:
\[\begin{array}{rcl}
	a_{\x}\Phi\para{(a_{\g}\otimes_{\C} {a'}\op_{\h})\otimes_{\Delta^{\C}}\tilde{m}_1}a'_{\y} & = & a_{\x}a_{\g}\tilde{m}_1a'_{\h^{-1}}a'_{\y},
\end{array}\]
and
\[
\begin{array}{rcl}
	\Phi\para{a_{\x}\para{(a_{\g}\otimes_{\C} {a'}\op_{\h})\otimes_{\Delta^{\C}}\tilde{m}_1}a'_{\y}} & = & \Phi\para{(a_{\x}a_{\g}\otimes_{\C} {a'}\op_{\y^{-1}}\ast {a'}\op_{\h})\otimes_{\Delta^{\C}}\tilde{m}_1}\\
	& = & \Phi\para{(a_{\x}a_{\g}\otimes_{\C} (a'_{\h^{-1}}a'_{\y})\op )\otimes_{\Delta^{\C}}\tilde{m}_1}\\
	& = &  a_{\x}a_{\g}\tilde{m}_1a'_{\h^{-1}}a'_{\y}.
\end{array}
\]
For the grade preservation property, consider $\x,\g,\h\in \oG$ such that $\x=\g\h^{-1}$ and $a_{\g}\in A_{\g}$, ${a'}\op_{\h}\in {A'}\op_{\h}$, $\tilde{m}_1\in \tilde{M}_1$. We have that $(a_{\g}\otimes_{\C} {a'}\op_{\h})\otimes_{\Delta^{\C}}\tilde{m}_1$ is of degree $\x$ in $(A\otimes_{\C} {A'}\op)\otimes_{\Delta^{\C}}\tilde{M}_1$ and $\Phi\para{(a_{\g}\otimes_{\C} {a'}\op_{\h})\otimes_{\Delta^{\C}}\tilde{m}_1}=a_{\g}\tilde{m}_1a'_{\h^{-1}}\in \tilde{M}_{\g\h^{-1}}=\tilde{M}_{\x}$. Henceforth, $\Phi$ is a $\oG$-graded $(A,A')$-bimodule homomorphism over $\C$. In particular, $\Phi$ is a morphism in the category of all $\oG$-graded $A$-modules, denoted by $A$-Gr. Because $A$ is strongly $\oG$-graded, it is well known \cite[Theorem 2.8.]{article:Dade1980} that we have the following equivalences of categories:
\[\xymatrix@C+=4cm{
B\textrm{-}\text{Mod} \ar@<+.5ex>@{->}[r]^{A\otimes_B-} & \ar@<+.5ex>@{->}[l]^{\left(-\right)_1} A\textrm{-}\mathrm{Gr},
}\]
where $B$-{Mod} is the category of all $B$-modules. Therefore, the $1$-component of $\Phi$, $\Phi_1$ is a morphism in the category $B$-\text{mod}. We have that
\[\Phi_1:\para{(A\otimes_{\C} {A'}\op)\otimes_{\Delta^{\C}}\tilde{M}_1}_1 \to \tilde{M}_1,\] but $\para{(A\otimes_{\C} {A'}\op)\otimes_{\Delta^{\C}}\tilde{M}_1}_1=\para{A\otimes_{\C} {A'}\op}_1\otimes_{\Delta^{\C}}\tilde{M}_1=\Delta^{\C} \otimes_{\Delta^{\C}}\tilde{M}_1$, so we have:
\[\Phi_1:\Delta^{\C}\otimes_{\Delta^{\C}}\tilde{M}_1 \to \tilde{M}_1,\qquad \Phi_1\para{(a_{\g}\otimes_{\C} {a'}\op_{\g})\otimes_{\Delta^{\C}}\tilde{m}_1}=a_{\g}\tilde{m}_1a'_{\g^{-1}},\]
for all $\g\in\oG$ and for all $a_{\g}\in A_{\g}$, ${a'}\op_{\g}\in {A'}\op_{\g}$, $\tilde{m}_1\in \tilde{M}_1$. Clearly, $\Phi_1$ is an isomorphism of $B$-{modules}. Therefore, by \cite[Corollary 2.10.]{article:Dade1980}, we have that $\Phi$ is an isomorphism in $A$-Gr, thus $\Phi$ is a bijection. Also, because we proved that $\Phi$ is a $\oG$-graded $(A,A')$-bimodule homomorphism over $\C$, we obtain that $(A\otimes_{\C} {A'}\op) \otimes_{\Delta^{\C}}\tilde{M}_1$ and $\tilde{M}$ are isomorphic as $\oG$-graded $(A,A')$-bimodules over $\C$.
\end{proof}

\begin{prop} \label{prop:equivalent_functors} The functors
\[\para{A\otimes_{\C} {A'}\op}\otimes_{\Delta^{\C}}-,\ A\otimes_B -,\ -\otimes_{B'}A':\Delta^{\C}\Md\to A\textrm{-}\mathrm{Gr}/\C{\textrm-}A'\]
are naturally isomorphic equivalences of categories, and their inverse is taking the $1$-component $(-)_1$.
\end{prop}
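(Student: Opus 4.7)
The plan is to exploit Theorem \ref{th:equivalent_categories}, which already identifies $F := (A \otimes_{\C} {A'}\op) \otimes_{\Delta^{\C}} -$ as an equivalence with quasi-inverse $(-)_1$. It will suffice to construct natural isomorphisms $\eta \colon A \otimes_B - \Rightarrow F$ and $\mu \colon - \otimes_{B'} A' \Rightarrow F$; this automatically promotes the other two functors to equivalences sharing the quasi-inverse $(-)_1$.

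For a $\Delta^{\C}$-module $M$, I first regard $M$ as a $(B,B')$-bimodule through the two $\O$-algebra homomorphisms $B \to \Delta^{\C}$, $b \mapsto b \otimes_{\C} 1$, and $B'\op \to \Delta^{\C}$, $b'\op \mapsto 1 \otimes_{\C} b'\op$, whose images commute inside $\Delta^{\C}_1 = B \otimes_{\C} B'\op$. Then $A \otimes_B M$ acquires the obvious left $A$-module structure and the $\oG$-grading $(A \otimes_B M)_{\g} = A_{\g} \otimes_B M$, and similarly for $M \otimes_{B'} A'$. I define
\[
\eta_M \colon A \otimes_B M \to F(M), \qquad a \otimes m \mapsto (a \otimes_{\C} 1) \otimes_{\Delta^{\C}} m,
\]
\[
\mu_M \colon M \otimes_{B'} A' \to F(M), \qquad m \otimes a' \mapsto (1 \otimes_{\C} a'\op) \otimes_{\Delta^{\C}} m,
\]
which are $B$- respectively $B'$-balanced, grading-preserving, and left $A$-linear respectively right $A'$-linear by construction.

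The main step is to show that $\eta_M$ and $\mu_M$ are bijective. For surjectivity of $\eta_M$, given a generator $(a_{\g} \otimes_{\C} {a'}\op_{\h}) \otimes_{\Delta^{\C}} m$ of $F(M)$, set $\x := \g\h^{-1}$ and $a_{\x} := a_{\g} u_{\h}^{-1} \in A_{\x}$. Since $u_{\h} \otimes_{\C} {a'}\op_{\h} \in \Delta^{\C}$, the right $\Delta^{\C}$-action on $A \otimes_{\C} {A'}\op$ from Lemma \ref{lemma:Delta-Algebra} gives
\[
(a_{\x} \otimes_{\C} 1) \cdot (u_{\h} \otimes_{\C} {a'}\op_{\h}) = a_{\x} u_{\h} \otimes_{\C} {a'}\op_{\h} = a_{\g} \otimes_{\C} {a'}\op_{\h},
\]
and balancing across the $\Delta^{\C}$-tensor yields
\[
(a_{\g} \otimes_{\C} {a'}\op_{\h}) \otimes_{\Delta^{\C}} m = (a_{\x} \otimes_{\C} 1) \otimes_{\Delta^{\C}} m' = \eta_M(a_{\x} \otimes m'),
\]
with $m' := (u_{\h} \otimes_{\C} {a'}\op_{\h}) \cdot m$. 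For injectivity I use the decomposition $A = \bigoplus_{\g} u_{\g} B$, so that $A \otimes_B M = \bigoplus_{\g} u_{\g} \otimes M$; the summands land in distinct graded pieces of $F(M)$, and within each piece the map $m \mapsto (u_{\g} \otimes_{\C} 1) \otimes_{\Delta^{\C}} m$ is injective because left-action by the invertible $u_{\g^{-1}} \otimes_{\C} 1 \in A \otimes_{\C} {A'}\op$ reduces it to $1 \otimes_{\Delta^{\C}} m \in \Delta^{\C} \otimes_{\Delta^{\C}} M \cong M$. The argument for $\mu_M$ is symmetric, with the roles of the two tensor factors interchanged.

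Once the bijectivity is established, I transport the right $A'$-action and the $\C$-compatibility condition (3) of Definition \ref{def:graded_bimodules_over_c} from $F(M)$ back to $A \otimes_B M$ along $\eta_M$, and analogously for $\mu_M$ and $M \otimes_{B'} A'$; this places both tensor products in $A\textrm{-}\mathrm{Gr}/\C{\textrm-}A'$ and promotes $\eta$ and $\mu$ to natural isomorphisms in that category. Naturality in $M$ is immediate from the elementary-tensor definitions. The main subtlety I anticipate is verifying that the transported right $A'$-structure on $A \otimes_B M$ agrees with a natural, choice-independent definition in terms of the crossed-product basis of $A'$ and the $\Delta^{\C}$-action on $M$; however this becomes automatic once the bijectivity of $\eta_M$ is in hand, because $F(M)$ itself is known to carry all the required structure by Lemma \ref{lemma:Delta-Algebra}.
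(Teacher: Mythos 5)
Your proposal is correct, and it takes a genuinely different route from the paper's proof. The paper directly equips $A\otimes_B M$ (Step 1) and $M\otimes_{B'}A'$ (Step 2) with explicit right/left module structures via formulas such as
\[(a\otimes_B m)\cdot a'_{\h}:=a u_{\h}\otimes_B \bigl(u^{-1}_{\h}\otimes_{\C} {a'}\op_{\h^{-1}}\bigr)m,\]
verifies at length that these are well defined, $B$-balanced, independent of the choice of $u_{\h}$, and satisfy the axioms of a $\oG$-graded $(A,A')$-bimodule over $\C$; only then (Step 3) does it invoke the abstract fact that $\mathcal{G}\circ\mathcal{F}\simeq\mathrm{id}$ and Theorem \ref{th:equivalent_categories} to deduce $\mathcal{F}\simeq\mathcal{G}^{-1}$. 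You instead define explicit natural transformations $\eta_M:a\otimes m\mapsto(a\otimes_{\C}1)\otimes_{\Delta^{\C}}m$ and $\mu_M:m\otimes a'\mapsto(1\otimes_{\C}a'\op)\otimes_{\Delta^{\C}}m$ at the level of graded left (resp.\ right) modules, prove bijectivity by hand using the crossed-product decomposition $A=\bigoplus_{\g}u_{\g}B$ and the identification $F(M)_1\cong\Delta^{\C}\otimes_{\Delta^{\C}}M\cong M$, and then \emph{transport} the remaining bimodule-over-$\C$ structure from $F(M)$. Your route buys you economy: you never have to verify the bimodule axioms for $A\otimes_B M$, nor check well-definedness or independence of the choice of $u_{\h}$, since all of that is inherited from $F(M)$ through the bijection. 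What the paper's route buys is an explicit formula for the right $A'$-action on $A\otimes_B M$, which is then used in later proofs (e.g.\ in Theorem \ref{th:Morita_extension} and Proposition \ref{lemma:hom}); with your approach one would still want to record the computation showing that the transported action coincides with the paper's formula. (That computation is easy: running the surjectivity trick on $\eta_M(a\otimes_B m)\cdot a'_{\h}=(a\otimes_{\C}{a'}\op_{\h^{-1}})\otimes_{\Delta^{\C}}m$ reproduces the paper's formula with $u_{\h^{-1}}^{-1}$ in place of $u_{\h}$, and these agree by the paper's independence-of-choice lemma.) Minor caveat: your phrase ``left-action by the invertible $u_{\g^{-1}}\otimes_{\C}1\in A\otimes_{\C}{A'}\op$'' should be read as the left $A$-action by $u_{\g^{-1}}$; what lands in $\Delta^{\C}_1$ is $u_{\g^{-1}}u_{\g}\otimes_{\C}1$, which is an invertible element of $\Delta^{\C}$ rather than the identity, but that suffices for your injectivity claim.
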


\begin{proof} In Theorem \ref{th:equivalent_categories}, we have already proven that $\Delta^{\C}$-mod and $A\textrm{-}\mathrm{Gr}/\C{\textrm-}A'$ are equivalent categories, and that their equivalence is given by the functor $\para{A\otimes_{\C} {A'}\op}\otimes_{\Delta^{\C}}-$, which is well-defined and its inverse is $(-)_1$. We complete the proof by also checking the properties of the functors $A\otimes_B -$ and $-\otimes_{B'}A'$.


\noindent \textit{\textbf{Step 1.}} Let $M$  be a  $\Delta^{\C}$-module. We show that $A\otimes_B M$ is a $\oG$-graded $(A,A')$-bimodule over $\C$. The right  $A'$-module structure is defined as follows: For every $\h\in \oG$ and for every $a\in A$, $m\in M$ and $a'_{\h}\in A'_{\h}$, let
\[(a\otimes_B m)\cdot a'_{\h}:=a\cdot u_{\h}\otimes_B (u^{-1}_{\h}\otimes_{\C} {a'}\op_{\h^{-1}})m.\]
 We begin by verifying that this definition does not depend on the choice of $u_{\h}$. Indeed, let $v_{\h}$ be another invertible homogeneous element of $A_{\h}$, hence there exists an invertible element of $B$, $b$, such that $v_{\h}=u_{\h}b$. We compute:
\begingroup
\allowdisplaybreaks
\[
\begin{array}{rcl}
	a\cdot v_{\h}\otimes_B (v^{-1}_{\h}\otimes_{\C} {a'}\op_{{\h}^{-1}})m & = & a\cdot u_{\h}b\otimes_B ((u_{\h}b)^{-1}\otimes_{\C} {a'}\op_{\h^{-1}})m\\
	& = & a\cdot {u}{_{\h}} b\otimes_B (b^{-1}{u}_{\h}^{-1}\otimes_{\C} {a'}\op_{\h^{-1}})m\\
	& = & a\cdot {u}{_{\h}} \otimes_B (bb^{-1}{u}_{\h}^{-1}\otimes_{\C} {a'}\op_{\h^{-1}})m\\
	& = & a\cdot {u}{_{\h}} \otimes_B ({u}_{\h}^{-1}\otimes_{\C} {a'}\op_{\h^{-1}})m.\\
\end{array}
\]
\endgroup
We verify  the definition is $B$-balanced. Indeed, for every $b\in B$, we have that:
\[
\begin{array}{rcl}
(ab\otimes _B m)a'_{\h} & = & ab\cdot u_{\h}\otimes_B (u^{-1}_{\h}\otimes_{\C} {a'}\op_{\h^{-1}})m
\end{array}
\]
and
\[
\begin{array}{rcl}
(a\otimes _B bm)a'_{\h} & = & a u_{\h}\otimes_B (u^{-1}_{\h}\otimes_{\C} {a'}\op_{\h^{-1}})bm\\
 & = & a u_{\h}\otimes_B (u^{-1}_{\h}\otimes_{\C} {a'}\op_{\h^{-1}})(b\otimes_{\C} 1)m\\
 & = & a u_{\h}\otimes_B (u^{-1}_{\h}b\otimes_{\C} {a'}\op_{\h^{-1}})m\\
 & = & a u_{\h}\otimes_B (u^{-1}_{\h}bu_{\h}u^{-1}_{\h}\otimes_{\C} {a'}\op_{\h^{-1}})m\\
 & = & a u_{\h}u^{-1}_{\h}bu_{\h}\otimes_B (u^{-1}_{\h}\otimes_{\C} {a'}\op_{\h^{-1}})m\\
 & = & a bu_{\h}\otimes_B (u^{-1}_{\h}\otimes_{\C} {a'}\op_{\h^{-1}})m.\\
\end{array}
\]
 We verify that $A\otimes_B M$ is a right  $A'$-module. We only check that for every $\g,\h\in\oG$ and for every $a\in A$, $a'_{\g}\in {A'_{\g}},a'_{\h}\in A'_{\h}$, we have:
\[(a\otimes_B m)(a'_{\g}\cdot a'_{\h})=((a\otimes_B m)a'_{\g})a'_{\h}.\]
Indeed, for the left part of the equality, we have for $a'_{\g}\in {A'_{\g}}$ and $a'_{\h}\in A'_{\h}$ that $a'_{\g}a'_{\h}\in {A'_{\g}}A'_{\h}\subseteq A'_{\g\h}$, thus
\[
\begin{array}{rcl}
	(a\otimes_B m)(a'_{\g}\cdot a'_{\h}) & = & au_{\g\h} \otimes _B (u_{\g\h}^{-1}\otimes_{\C} (a'_{\g}a'_{\h})\op)m,
\end{array}
\]
but $u_{\g\h}$, $u_{\g}$ and $u_{\h}$ being invertible homogeneous elements, there exists an invertible element $u\in B$ such that $u_{\g\h}=u_{\g}u_{\h}u$, henceforth
\[
\begin{array}{rcl}
	(a\otimes_B m)(a'_{\g}\cdot a'_{\h}) & = & au_{\g}u_{\h}u \otimes _B (\para{u_{\g}u_{\h}u}^{-1}\otimes_{\C} (a'_{\g}a'_{\h})\op)m\\
	& = & au_{\g}u_{\h}u \otimes _B (u^{-1}\para{u_{\g}u_{\h}}^{-1}\otimes_{\C} (a'_{\g}a'_{\h})\op)m\\
	& = & au_{\g}u_{\h} \otimes _B u(u^{-1}\para{u_{\g}u_{\h}}^{-1}\otimes_{\C} (a'_{\g}a'_{\h})\op)m\\
	& = & au_{\g}u_{\h} \otimes _B (uu^{-1}\para{u_{\g}u_{\h}}^{-1}\otimes_{\C} (a'_{\g}a'_{\h})\op)m\\
	& = & au_{\g}u_{\h} \otimes _B (\para{u_{\g}u_{\h}}^{-1}\otimes_{\C} (a'_{\g}a'_{\h})\op)m.
\end{array}
\]
For the right part of the equality we have
\[
\begin{array}{rcl}
	((a\otimes_B m)a'_{\g})a'_{\h} & = & ( a u_{\g}\otimes_B (u^{-1}_{\g}\otimes_{\C} {a'}\op_{\g^{-1}})m )a'_{\h}\\
	& = &  a u_{\g}u_{\h}\otimes_B (u^{-1}_{\h}\otimes_{\C} {a'}\op_{\h^{-1}})(u^{-1}_{\g}\otimes_{\C} {a'}\op_{\g^{-1}})m \\
	& = &  a u_{\g}u_{\h}\otimes_B (u^{-1}_{\h}u^{-1}_{\g}\otimes_{\C} {a'}\op_{\h^{-1}}\ast {a'}\op_{\g^{-1}})m \\
	& = & au_{\g}u_{\h} \otimes _B (\para{u_{\g}u_{\h}}^{-1}\otimes_{\C} (a'_{\g}a'_{\h})\op)m.
\end{array}
\]
To verify that $A\otimes_B M$ is an $(A,A')$-bimodule, it remains to prove that for every $\h\in\oG$ and for every $\alpha$, $a\in A$, $m\in M$ and $a'_{\h}\in A'_{\h}$, we have
\[(\alpha(a\otimes_B m))a'_{\h}=\alpha((a\otimes_B m)a'_{\h}).\]
Indeed, we have:
\[
\begin{array}{rcl}
(\alpha(a\otimes_B m))a'_{\h} & = & (\alpha a\otimes_B m)a'_{\h} \\
& = & \alpha a u_{\h}\otimes_B (u^{-1}_{\h}\otimes_{\C} {a'}\op_{\h^{-1}})m, \\
\end{array}
\]
and
\begingroup
\allowdisplaybreaks
\[
\begin{array}{rcl}
\alpha((a\otimes_B m)a'_{\h}) & = & \alpha(a u_{\h}\otimes_B (u^{-1}_{\h}\otimes_{\C} {a'}\op_{\h^{-1}})m)\\
& = & \alpha a u_{\h}\otimes_B (u^{-1}_{\h}\otimes_{\C} {a'}\op_{\h^{-1}})m.\\
\end{array}
\]
\endgroup
We define the $x$-component $A\otimes_B M$, for every $x\in\oG$, by
\[(A\otimes_B M)_x:=A_x\otimes _ B M.\]
We  verify that
\[A_{\g}(A\otimes _B M)_x A'_{\h}\subseteq (A\otimes_B M)_{\g x\h},\,\forall \g,x,\h\in\oG.\]
Indeed, for every $a_{\g}\in A_{\g},a_x\in A_x, m\in M, a'_{\h}\in A'_{\h}$, we have:
\[
\begin{array}{rcl}
a_{\g}(a_x\otimes_B m)a'_{\h} & = & (a_{\g}a_x\otimes_B m)a'_{\h}\\
& = & a_{\g}a_x u_{\h}\otimes_B (u^{-1}_{\h}\otimes_{\C} {a'}\op_{\h^{-1}})m,\\
\end{array}
\]
where $a_{\g}a_x u_{\h}\in A_{\g x\h}$ and $(u^{-1}_{\h}\otimes_{\C} {a'}\op_{\h^{-1}})m\in M$, hence  $a_{\g}a_x u_{\h}\otimes_B (u^{-1}_{\h}\otimes_{\C} {a'}\op_{\h^{-1}})m\in (A\otimes_B M)_{\g x\h}$.

We show that $A\otimes_B M$ is an $(A,A')$-bimodule over $\C$. Indeed, for every $\g,\h\in \oG$ and for every $a_{\g}\in A_{\g}$, $m\in M$, $c_{\h}\in\C_h$ we have
\[
\begin{array}{rcl}
(a_{\g}\otimes_B m)c_{\h} & = & a_{\g}u_{\h}\otimes_B(u^{-1}_{\h}\otimes_{\C} c\op_{\h^{-1}}) m\\
& = & a_{\g}u_{\h}\otimes_B(u^{-1}_{\h}c_{\h}\otimes_{\C} 1) m\\
& = & a_{\g}u_{\h}u^{-1}_{\h}c_{\h}\otimes_B(1\otimes_{\C} 1) m\\
& = & a_{\g}c_{\h}\otimes_B m,\\
\end{array}
\]
and
\[
\begin{array}{rcl}
	\tensor[^{\g}]{c}{_{\h}}(a_{\g}\otimes _B m) & = & \tensor[^{\g}]{c}{_{\h}}a_{\g}\otimes _B m\\
	& = & u_{\g}c_{\h}u^{-1}_{\g}a_{\g}\otimes _B m;\\
\end{array}
\]
but $u^{-1}_{\g}a_{\g}\in B$ and $c_{\h}\in C_A(B)$, thus
\[
\begin{array}{rcl}
	\tensor[^{\g}]{c}{_{\h}}(a_{\g}\otimes _B m) & = & u_{\g}u^{-1}_{\g}a_{\g} c_{\h}\otimes _B m\\
	 & = & a_{\g} c_{\h}\otimes _B m.\\
\end{array}
\]


\noindent \textit{\textbf{Step 2.}} Let $M$  be $\Delta^{\C}$-module. We  prove that $M\otimes_{B'}A'$ is a $\oG$-graded $(A,A')$-bimodule over $\C$. The left  $A$-module structure is defined as follows:
For every $\g\in \oG$ and for every $a'\in A'$, $m\in M$ and $a_{\g}\in A_{\g}$,
\[a_{\g}(m\otimes_{B'}a'):=(a_{\g}\otimes_{\C} (u'^{-1}_{\g})\op)m\otimes_{B'}u'_{\g}a'.\]
We begin by verifying that this definition does not depend on the choice of $u'_{\g}$. Let $v'_{\g}$ be another invertible homogeneous element of $A'_{\g}$, hence there exists an invertible element of $B'$, ${b'}$, such that $v'_{\g}={b'}u'_{\g}$. We compute:
\begingroup
\allowdisplaybreaks
\begin{align*}
	(a_{\g}\otimes_{\C} (v'^{-1}_{\g})\op))m\otimes_{B'}v'_{\g}a'& = (a_{\g}\otimes_{\C} (({b'}u'_{\g})^{-1})\op)m\otimes_{B'}({b'}u'_{\g})a'\\
	 &=  (a_{\g}\otimes_{\C} (u'^{-1}_{\g}{b'}^{-1})\op)m\otimes_{B'}{b'}u'_{\g}a'\\
	 &=  (a_{\g}\otimes_{\C} ({b'}^{-1})\op\ast (u'^{-1}_{\g})\op)m\otimes_{B'}{b'}u'_{\g}a'\\
	 &=  (a_{\g}\otimes_{\C} {b'}\op \ast ({b'}^{-1})\op\ast (u'^{-1}_{\g})\op)m \otimes_{B'}u'_{\g}a'\\
	 &=  (a_{\g}\otimes_{\C} ({b'}^{-1} {b'})\op\ast (u'^{-1}_{\g})\op)m\otimes_{B'}u'_{\g}a'\\
	 &=  (a_{\g}\otimes_{\C} (u'^{-1}_{\g})\op)m\otimes_{B'}u'_{\g}a'.
\end{align*}
\endgroup
We verify that the definition is $B'$-balanced. Indeed, for every ${b'}\in B'$, we have:
\[
\begin{array}{rcl}
a_{\g}(m{b'}\otimes_{B'}a') & = & (a_{\g}\otimes_{\C} (u'^{-1}_{\g})\op)(m{b'})\otimes_{B'}u'_{\g}a'\\
& = & (a_{\g}\otimes_{\C} ({b'}u'^{-1}_{\g})\op)m\otimes_{B'}u'_{\g}a'\\
\end{array}
\]
and
\[
\begin{array}{rcl}
a_{\g}(m\otimes_{B'}{b'}a') & = & (a_{\g}\otimes_{\C} (u'^{-1}_{\g})\op)m\otimes_{B'}u'_{\g}{b'}a'\\
& = & (a_{\g}\otimes_{\C} (u'^{-1}_{\g})\op)m\otimes_{B'}u'_{\g}{b'}u'^{-1}_{\g}u'_{\g}a'\\
& = & ((a_{\g}\otimes_{\C} (u'^{-1}_{\g})\op)m)u'_{\g}{b'}u'^{-1}_{\g}\otimes_{B'}u'_{\g}a'\\
& = & (a_{\g}\otimes_{\C} (u'^{-1}_{\g}u'_{\g}{b'}u'^{-1}_{\g})\op)m\otimes_{B'}u'_{\g}a'\\
& = & (a_{\g}\otimes_{\C} ({b'}u'^{-1}_{\g})\op)m\otimes_{B'}u'_{\g}a'.   \\
\end{array}
\]
We verify that $M\otimes_{B'}A'$ is a left $A$-module. Here we only check that for every $\g,\h\in\oG$ and for every $a'\in A'$, $a_{\g}\in {A_{\g}},a_{\h}\in A_{\h}$, we have:
\[(a_{\g}\cdot a_{\h})(m\otimes_{B'} a') = a_{\g}( a_{\h}(m\otimes_{B'} a')).\]
Indeed, we have:
\[
\begin{array}{rcl}
	(a_{\g}\cdot a_{\h})(m\otimes_{B'} a') & = & (a_{\g}a_{\h}\otimes_{\C} ((u'_{\g}u'_{\h})^{-1})\op)m\otimes_{B'}u'_{\g}u'_{\h}a'\\
	& = & (a_{\g}a_{\h}\otimes_{\C} (u'^{-1}_{\h}u'^{-1}_{\g})\op)m\otimes_{B'}u'_{\g}u'_{\h}a'\\
	& = & (a_{\g}a_{\h}\otimes_{\C} (u'^{-1}_{\g})\op\ast (u'^{-1}_{\h})\op)m\otimes_{B'}u'_{\g}u'_{\h}a'\\
\end{array}
\]
and
\[
\begin{array}{rcl}
	a_{\g}( a_{\h}(m\otimes_{B'} a')) & = & a_{\g}( (a_{\h}\otimes_{\C} (u'^{-1}_{\h})\op)m\otimes_{B'}u'_{\h}a')\\
	& = & (a_{\g}\otimes_{\C} (u'^{-1}_{\g})\op)(a_{\h}\otimes_{\C} (u'^{-1}_{\h})\op)m\otimes_{B'}u'_{\g}u'_{\h}a'\\
	& = & (a_{\g}a_{\h}\otimes_{\C} (u'^{-1}_{\g})\op\ast (u'^{-1}_{\h})\op)m\otimes_{B'}u'_{\g}u'_{\h}a'.\\
\end{array}
\]
 To show that $M\otimes_{B'}A'$ is an $(A,A')$-bimodule, it remains to prove that for every $\g\in\oG$ and for every $\alpha',a'\in A'$, $m\in M$ and $a_{\g}\in A_{\g}$, we have:
\[(a_{\g}(m\otimes_{B'} a'))\alpha'=a_{\g}((m\otimes_{B'} a')\alpha').\]
Indeed,
\[\begin{array}{rcl}
\para{a_{\g}(m\otimes_{B'} a')}\alpha' & = & \para{(a_{\g}\otimes_{\C} (u'^{-1}_{\g})\op)m\otimes_{B'}u'_{\g}a'}\alpha'\\
& = & (a_{\g}\otimes_{\C} (u'^{-1}_{\g})\op)m\otimes_{B'}u'_{\g}a'\alpha'
\end{array}\]
and
\[
\begin{array}{rcl}
a_{\g}((m\otimes_{B'} a')\alpha') & = & a_{\g}(m\otimes_{B'} a'\alpha')\\
& = & (a_{\g}\otimes_{\C} (u'^{-1}_{\g})\op)m\otimes_{B'}u'_{\g}a'\alpha'.
\end{array}
\]
The $x$-component of $M\otimes_{B'}A'$, for every $x\in\oG$, is, by definition,
\[(M\otimes_{B'}A')_x:=M\otimes_{B'}A'_x.\]
We  verify that
\[A_{\g}(M\otimes_{B'}A')_x A'_{\h}\subseteq (M\otimes_{B'}A')_{\g x\h},\,\forall \g,x,\h\in\oG.\]
Indeed, for every $a_{\g}\in A_{\g}$, $a'_x\in A'_x$, $m\in M, a'_{\h}\in A'_{\h}$, we have:
\[
\begin{array}{rcl}
a_{\g}(m\otimes_{B'} a'_x)a'_{\h} & = & a_{\g}(m\otimes_{B'} a'_xa'_{\h})\\
& = & (a_{\g}\otimes_{\C} (u'^{-1}_{\g})\op)m\otimes_{B'}u'_{\g} a'_xa'_{\h}),\\
\end{array}
\]
where $u'_{\g} a'_xa'_{\h}\in A'_{\g x\h}$ and $(a_{\g}\otimes_{\C} (u'^{-1}_{\g})\op)m\in M$, hence  $(a_{\g}\otimes_{\C} (u'^{-1}_{\g})\op)m\otimes_{B'}u'_{\g} a'_xa'_{\h})\in (M\otimes_{B'}A')_{\g x\h}$.

We  verify that $M\otimes_{B'}A'$ is an $(A,A')$-bimodule over $\C$. For all $\g,\h\in \oG$ and for all $a'_{\g}\in A'_{\g}$, $m\in M$, $c_{\h}\in\C_h$ we have
\[\begin{array}{rcl}
( m\otimes_{B'}a'_{\g})\tensor[^{{\g}^{-1}}]{c}{_{\h}} & = &  m\otimes_{B'}a'_{\g}\tensor[^{{\g}^{-1}}]{c}{_{\h}}\\
& = &  m\otimes_{B'}a'_{\g}u'^{-1}_{\g}c_{\h}u'_{\g}\\
\end{array}
\]
and
\[
\begin{array}{rcl}
c_{\h}( m\otimes_{B'}a'_{\g}) & = & (c_{\h}\otimes_{\C} (u'^{-1}_{\h})\op)m\otimes_{B'}u'_{\h}a'_{\g}\\
& = & (1\otimes_{\C} (c_{\h}u'^{-1}_{\h})\op)m\otimes_{B'}u'_{\h}a'_{\g},\\
\end{array}
\]
but $c_{\h}u'^{-1}_{\h}\in B'$, thus
\[
\begin{array}{rcl}
c_{\h}( m\otimes_{B'}a'_{\g}) & = & m\otimes_{B'}c_{\h}u'^{-1}_{\h}u'_{\h}a'_{\g}\\
& = & m\otimes_{B'}c_{\h}a'_{\g};\\
\end{array}
\]
 if we take $a'_{\g}=u'_{\g}{b'}$, for some ${b'}\in B'$, and because $u'^{-1}_{\g}c_{\h}u'_{\g}\in C_{A'}(B')$ we have
\[
\begin{array}{rcl}
c_{\h}( m\otimes_{B'}a'_{\g})& = & m\otimes_{B'}u'_{\g}u'^{-1}_{\g}c_{\h}u'_{\g}{b'}\\
& = & m\otimes_{B'}u'_{\g}{b'}u'^{-1}_{\g}c_{\h}u'_{\g}\\
& = & m\otimes_{B'}a'_{\g}u'^{-1}_{\g}c_{\h}u'_{\g}.\\
\end{array} \]
\noindent \textit{\textbf{Step 3.}} We prove that the functors $\mathcal{F}=A\otimes_B -$ and $\mathcal{F}'=-\otimes_{B'}A'$ have as an inverse the functor $\mathcal{G}=(-)_1$.
If $M\in \Delta^{\C}$-{mod}, then we obviously have that ${A\otimes_B M}_1 \simeq M$ as $\Delta^{\C}$-modules. By Theorem \ref{th:equivalent_categories},  the functor $\mathcal{G}$ has an inverse $\mathcal{G}^{-1}=(A\otimes_{\C} {A'}\op)\otimes_{\Delta^{\C}}-$. Since $\mathcal{G}\circ \mathcal{F}\simeq \mathrm{id}_{\Delta^{\C}-\text{mod}}$, we deduce that $\mathcal{F}\simeq \mathcal{G}^{-1}$. By a similar argument we get $\mathcal{F}'\simeq \mathcal{G}^{-1}$, hence we have three naturally isomorphic equivalences of categories as claimed.
\end{proof}

\begin{prop} \label{lemma:hom} For a $\Delta^{\C}$-module $M$, we denote $\widetilde{M}=(A\otimes_{\C} {A'}\op)\otimes_{\Delta^{\C}} M\simeq A\otimes_B M \simeq M\otimes_{B'}A'$. Let $A''$ be a third $\bar G$-graded crossed product over $\mathcal{C}$.
\begin{enumerate}
	\item[{\rm(1)}] Let $M$ be a $\Delta(A\otimes_{\C} {A'}\op)$-module and let $M'$ be a $\Delta(A'\otimes_{\C} {{A''}}\op)$-module. Then $M\otimes_{B'}M'$ is a $\Delta(A\otimes_{\C} {{A''}}\op)$-module with the multiplication operation defined by
	\[(a_{\g}\otimes_{\C} {a''}\op_{\g})(m\otimes_{B'} m'):=(a_{\g}\otimes_{\C} (u'^{-1}_{\g})\op) m\otimes_{B'} (u'_{\g} \otimes_{\C} {a''}\op_{\g})m',\]
	for all $\g\in\oG$ and for all $a_{\g}\in A_{\g},\,{a''}\op_{\g}\in {A''}\op_{\g},\,m\in M,\, m'\in M'$.
	Moreover, we have the isomorphism
	\[\widetilde{M\otimes_{B'} M'}\simeq \widetilde{M}\otimes_{A'} \widetilde{M'}\]
	of $\oG$-graded $(A,{A''})$-bimodules over $\C$.
\item[{\rm(2)}] Let $M$ be a $\Delta(A'\otimes_{\C} A\op)$-module and let $M'$ be a $\Delta(A'\otimes_{\C} {A''}\op)$-module. Then $\Hom{B'}{M}{M'}$ is a $\Delta(A\otimes_{\C} {A''}\op)$-module with the following operation:
	\[((a_{\g}\otimes_{\C} {a''}\op_{\g})f)(m):=(u'_{\g}\otimes_{\C} {a''}\op_{\g})f((u'^{-1}_{\g}\otimes_{\C} a_{\g^{-1}}\op)m),\]
		for all $\g\in\oG$ and for all $a_{\g}\in A_{\g},\,{a''}\op_{\g}\in {A''}\op_{\g},\,m\in M$, $f\in \Hom{B'}{M}{M'}$.
		Moreover, we have the isomorphism
	\[\widetilde{\Hom{B'}{M}{M'}}\simeq \Hom{A'}{\widetilde{M}}{\widetilde{M'}}\]
	of $\oG$-graded $(A,{A''})$-bimodules over $\C$.
\end{enumerate}
\end{prop}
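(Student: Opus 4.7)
The plan is to leverage the categorical equivalence of Theorem~\ref{th:equivalent_categories} and Proposition~\ref{prop:equivalent_functors}, which identifies $\Delta^{\C}$-modules with $\oG$-graded bimodules over $\C$ through three naturally isomorphic functors with common inverse $(-)_{1}$. The strategy in each part is to construct the appropriate graded bimodule over $\C$ on the right-hand side of the claimed isomorphism, verify its $\oG$-grading and over-$\C$ structure, identify its $1$-component with the claimed $\Delta$-module via a standard categorical manipulation, and finally read off the explicit action formula by unwinding Proposition~\ref{prop:equivalent_functors}.

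For part~(1) I would fix the realizations $\widetilde{M}\simeq M\otimes_{B'}A'$ and $\widetilde{M'}\simeq A'\otimes_{B'}M'$ given by Proposition~\ref{prop:equivalent_functors}. Then $\widetilde{M}\otimes_{A'}\widetilde{M'}$ is naturally a $\oG$-graded $(A,A'')$-bimodule over $\C$: the grading is inherited because the $A'$-balance preserves degree, and the over-$\C$ axiom follows from the corresponding axiom on the two factors. The contraction $(m\otimes a')\otimes_{A'}(a'''\otimes m')\mapsto m\otimes_{B'}a'a'''m'$, restricted to degree~$1$ where $a'\in A'_{\h}$ and $a'''\in A'_{\h^{-1}}$ force $a'a'''\in B'$, yields an isomorphism $M\otimes_{B'}M'\simeq(\widetilde{M}\otimes_{A'}\widetilde{M'})_{1}$. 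By Theorem~\ref{th:equivalent_categories} this equips $M\otimes_{B'}M'$ with a canonical $\Delta(A\otimes_{\C}{A''}\op)$-structure and supplies the required bimodule isomorphism $\widetilde{M\otimes_{B'}M'}\simeq\widetilde{M}\otimes_{A'}\widetilde{M'}$. The explicit action matches the statement by substituting the formulas of Proposition~\ref{prop:equivalent_functors} into $a_{\g}\cdot[(m\otimes_{B'}1)\otimes_{A'}(1\otimes_{B'}m')]\cdot a''_{\g^{-1}}$ and observing that, after moving $u'_{\g^{-1}}$ across the $A'$-balance, the product $u'_{\g}u'_{\g^{-1}}\in B'$ combines with $(u'^{-1}_{\g^{-1}}\otimes_{\C}{a''}\op_{\g})$ to produce $(u'_{\g}\otimes_{\C}{a''}\op_{\g})$, reproducing the claimed formula.

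For part~(2) I would fix $\widetilde{M}\simeq A'\otimes_{B'}M$ and $\widetilde{M'}\simeq A'\otimes_{B'}M'$, equip $\Hom{A'}{\widetilde{M}}{\widetilde{M'}}$ with the standard $(A,A'')$-bimodule structure $(a\cdot f\cdot a'')(x)=f(xa)\cdot a''$, and grade it by $\Hom{A'}{\widetilde{M}}{\widetilde{M'}}_{\x}=\{f\mid f(\widetilde{M}_{\bar{k}})\subseteq\widetilde{M'}_{\bar{k}\x}\}$, a convention that combines multiplicatively with $A_{\g}$ on the left and $A''_{\h}$ on the right. The over-$\C$ condition is verified by unwinding the $\zeta,\zeta''$-actions on $\widetilde{M},\widetilde{M'}$, in the style of Step~1 of Proposition~\ref{prop:equivalent_functors}. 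Since $\widetilde{M}$ is $A'$-generated by $\widetilde{M}_{1}=M$, restriction along $M\hookrightarrow\widetilde{M}$ yields $\Hom{A'}{\widetilde{M}}{\widetilde{M'}}_{1}\simeq\Hom{B'}{M}{M'}$, and Theorem~\ref{th:equivalent_categories} then transfers a $\Delta(A\otimes_{\C}{A''}\op)$-structure to $\Hom{B'}{M}{M'}$ and delivers the claimed isomorphism. The explicit formula emerges from computing $(\xi f)(m)=f(m\cdot a_{\g})\cdot a''_{\g^{-1}}$ on a generator $m\in M$: one substitutes $m\cdot a_{\g}=u'_{\g}\otimes_{B'}(u'^{-1}_{\g}\otimes_{\C}{a}\op_{\g^{-1}})m$ from the right $A$-action formula of Proposition~\ref{prop:equivalent_functors}, invokes $A'$-linearity of $f$, applies the right $A''$-action formula on $\widetilde{M'}$, and absorbs $u'_{\g}u'_{\g^{-1}}\in B'$ through the $\otimes_{B'}$-balance.

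The main obstacle will be the careful bookkeeping in these two explicit verifications: each term must be followed through two or three nested tensor products with non-trivial balance relations, and one must invoke the $\zeta$-structure and the $\oG$-action on $\C$ at the correct steps, in the same technical style as the long displayed computations in Lemma~\ref{lemma:Delta-Algebra} and in Proposition~\ref{prop:equivalent_functors}. Once the abstract identifications provided by the categorical equivalence are in place the formulas stated are forced, and the bulk of the work is the line-by-line check that the chosen bimodule realisations produce precisely those actions.
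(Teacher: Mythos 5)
Your proposal is correct and reaches all the stated conclusions, but it reorganizes the paper's argument rather than replicating it. Both arguments rest on the same pillars: the check that $\widetilde{M}\otimes_{A'}\widetilde{M'}$ (resp.\ $\Hom{A'}{\widetilde{M}}{\widetilde{M'}}$) is a $\oG$-graded $(A,A'')$-bimodule over $\C$, and the equivalence of Theorem~\ref{th:equivalent_categories} with the three naturally isomorphic inverses $A\otimes_B-$, $-\otimes_{B'}A'$, $(A\otimes_{\C}{A'}\op)\otimes_{\Delta^{\C}}-$ of Proposition~\ref{prop:equivalent_functors}. The paper works bottom-up: it first proves directly, by a long displayed computation (independence of the choice of $u'_{\g}$, $B'$-balance, and associativity), that the given formula defines a $\Delta(A\otimes_{\C}{A''}\op)$-module structure on $M\otimes_{B'}M'$ (resp.\ $\Hom{B'}{M}{M'}$); it then checks the graded-bimodule-over-$\C$ structure on $\widetilde{M}\otimes_{A'}\widetilde{M'}$ (resp.\ $\Hom{A'}{\widetilde{M}}{\widetilde{M'}}$) with the same grading convention you use; and finally it produces the isomorphism via the chain $\widetilde{M}\otimes_{A'}\widetilde{M'}\simeq A\otimes_B M\otimes_{A'}A'\otimes_{B'}M'\simeq A\otimes_B(M\otimes_{B'}M')\simeq\widetilde{M\otimes_{B'}M'}$, and in part (2) via the identification $\Hom{A'}{\widetilde{M}}{\widetilde{M'}}_1=\Hom{A'\textrm{-}\mathrm{Gr}}{\widetilde{M}}{\widetilde{M'}}=\Hom{B'}{M}{M'}$ using that $A'$ is strongly graded. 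You work top-down: verify the graded-bimodule structure, identify its $1$-component with $M\otimes_{B'}M'$ (resp.\ $\Hom{B'}{M}{M'}$), transfer the $\Delta$-module structure along the equivalence so that well-definedness and associativity come for free, and then unwind the transferred action — pushing $u'_{\g}u'_{\g^{-1}}\in B'$ through the $B'$- and $A'$-balance relations — to recover exactly the displayed formulas. Both unwindings you sketch do reproduce the stated actions, so your route is sound; what it buys is eliminating the heaviest block of direct verification in the paper's proof (the $\Delta$-module axiom checks), at the smaller cost of the explicit bookkeeping in the unwind step.
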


\begin{proof} (1) We prove that $M\otimes_{B'}M'$ is a $\Delta(A\otimes_{\C} {A''}\op)$-module with the given multiplication operation. The  definition does not depend on the choice of the homogeneous invertible elements $u'_{\g}\in A'_{\g}$. Indeed, let $v'_{\g}$ be another invertible homogeneous element of $A'_{\g}$, hence there exists an invertible element ${b'}$ of $B'$,  such that $v'_{\g}={b'}u'_{\g}$. We compute:
\begin{align*}
(a_{\g}&\otimes_{\C} (v'^{-1}_{\g})\op) m\otimes_{B'} (v'_{\g} \otimes_{\C} {a''}\op_{\g})m'\\
  = & (a_{\g}\otimes_{\C} (({b'}u'_{\g})^{-1})\op) m\otimes_{B'} (({b'}u'_{\g}) \otimes_{\C} {a''}\op_{\g})m'\\
  = & (a_{\g}\otimes_{\C} (u'^{-1}_{\g}{b'}^{-1})\op) m\otimes_{B'} (({b'}u'_{\g}) \otimes_{\C} {a''}\op_{\g})m'\\
  = & (a_{\g}\otimes_{\C} {b'}\op\ast(u'^{-1}_{\g}{b'}^{-1})\op) m\otimes_{B'} (u'_{\g} \otimes_{\C} {a''}\op_{\g})m'\\
  = & (a_{\g}\otimes_{\C} (u'^{-1}_{\g}{b'}^{-1}{b'})\op) m\otimes_{B'} (u'_{\g} \otimes_{\C} {a''}\op_{\g})m'\\
  = & (a_{\g}\otimes_{\C} (u'^{-1}_{\g})\op) m\otimes_{B'} (u'_{\g} \otimes_{\C} {a''}\op_{\g})m'.
\end{align*}
We verify that the multiplication definition is $B'$-balanced. Indeed, for every ${b'}\in B'$, we have that:
\begin{align*}
	(a_{\g} & \otimes_{\C} {a''}\op_{\g})(m{b'}\otimes_{B'} m') \\
	= & (a_{\g}\otimes_{\C} (u'^{-1}_{\g})\op) m{b'}\otimes_{B'} (u'_{\g} \otimes_{\C} {a''}\op_{\g})m'\\
	= & (a_{\g}\otimes_{\C} (u'^{-1}_{\g})\op)(1\otimes_{\C} {b'}\op) m\otimes_{B'} (u'_{\g} \otimes_{\C} {a''}\op_{\g})m'\\
	= & (a_{\g}\otimes_{\C} (u'^{-1}_{\g})\op\ast {b'}\op) m\otimes_{B'} (u'_{\g} \otimes_{\C} {a''}\op_{\g})m',
\end{align*}
and
\begingroup
\allowdisplaybreaks
\begin{align*}
	(a_{\g} & \otimes_{\C} {a''}\op_{\g})(m\otimes_{B'} {b'}m') \\
	= & (a_{\g}\otimes_{\C} (u'^{-1}_{\g})\op) m\otimes_{B'} (u'_{\g} \otimes_{\C} {a''}\op_{\g}){b'}m'\\
	= & (a_{\g}\otimes_{\C} (u'^{-1}_{\g})\op) m\otimes_{B'} (u'_{\g} \otimes_{\C} {a''}\op_{\g})({b'}\otimes_{\C} 1)m'\\
	= & (a_{\g}\otimes_{\C} (u'^{-1}_{\g})\op) m\otimes_{B'} (u'_{\g}{b'} \otimes_{\C} {a''}\op_{\g})m'\\
	= & (a_{\g}\otimes_{\C} (u'^{-1}_{\g})\op) m\otimes_{B'} (u'_{\g}{b'}u'^{-1}_{\g}u'_{\g} \otimes_{\C} {a''}\op_{\g})m'\\
	= & (a_{\g}\otimes_{\C} (u'^{-1}_{\g})\op) m\otimes_{B'} (\tensor*[^{\g}]{{{b'}}}{}u'_{\g} \otimes_{\C} {a''}\op_{\g})m';
\end{align*}
\endgroup
since $\tensor*[^{\g}]{{{b'}}}{}\in B'$, we have
\begingroup
\allowdisplaybreaks
\begin{align*}
	(a_{\g} & \otimes_{\C} {a''}\op_{\g})(m\otimes_{B'} {b'}m') \\
	= & \tensor*[^{\g}]{{{b'}}}{}(a_{\g}\otimes_{\C} (u'^{-1}_{\g})\op) m\otimes_{B'} (u'_{\g} \otimes_{\C} {a''}\op_{\g})m'\\
	= & (1\otimes_{\C} (\tensor*[^{\g}]{{{b'}}}{})\op)(a_{\g}\otimes_{\C} (u'^{-1}_{\g})\op) m\otimes_{B'} (u'_{\g} \otimes_{\C} {a''}\op_{\g})m'\\
	= & (a_{\g}\otimes_{\C} (\tensor*[^{\g}]{{{b'}}}{})\op\ast (u'^{-1}_{\g})\op) m\otimes_{B'} (u'_{\g} \otimes_{\C} {a''}\op_{\g})m'\\
	= & (a_{\g}\otimes_{\C} (u'^{-1}_{\g}\tensor*[^{\g}]{{{b'}}}{})\op) m\otimes_{B'} (u'_{\g} \otimes_{\C} {a''}\op_{\g})m'\\
	= & (a_{\g}\otimes_{\C} (u'^{-1}_{\g}u'_{\g}{b'}u'^{-1}_{\g})\op) m\otimes_{B'} (u'_{\g} \otimes_{\C} {a''}\op_{\g})m'\\
	= & (a_{\g}\otimes_{\C} ({b'}u'^{-1}_{\g})\op) m\otimes_{B'} (u'_{\g} \otimes_{\C} {a''}\op_{\g})m'\\
	= & (a_{\g}\otimes_{\C} (u'^{-1}_{\g})\op\ast {b'}\op) m\otimes_{B'} (u'_{\g} \otimes_{\C} {a''}\op_{\g})m'.
\end{align*}
\endgroup
 We verify that $M\otimes_{B'}M'$ is a $\Delta(A\otimes_{\C} {A''}\op)$-module. Let $\g,\,\h\in\oG$ and $a_{\g}\in A_{\g}$, $a_{\h}\in A_{\h}$, ${a''}\op_{\g}\in {A''}\op_{\g},\,{a''}\op_{\h}\in {A''}\op_{\h},\,m\in M,\, m'\in M'$. We only check that
\[((a_{\g}\otimes_{\C} {a''}\op_{\g})(a_{\h}\otimes_{\C} {a''}\op_{\h}))(m\otimes_{B'} m')=(a_{\g}\otimes_{\C} {a''}\op_{\g})((a_{\h}\otimes_{\C} {a''}\op_{\h})(m\otimes_{B'} m')).\]
Indeed,
\begin{align*}
(a_{\g}&\otimes_{\C} {a''}\op_{\g})((a_{\h}\otimes_{\C} {a''}\op_{\h})(m\otimes_{B'} m'))\\
 = & (a_{\g}\otimes_{\C} {a''}\op_{\g})\para{(a_{\h}\otimes_{\C} (u'^{-1}_{\h})\op) m\otimes_{B'} (u'_{\h} \otimes_{\C} {a''}\op_{\h})m'}\\
  = & (a_{\g}\otimes_{\C} (u'^{-1}_{\g})\op)(a_{\h}\otimes_{\C} (u'^{-1}_{\h})\op) m\otimes_{B'} (u'_{\g} \otimes_{\C} {a''}\op_{\g})(u'_{\h} \otimes_{\C} {a''}\op_{\h})m'\\
  = & (a_{\g}a_{\h}\otimes_{\C} (u'^{-1}_{\g})\op\ast (u'^{-1}_{\h})\op) m\otimes_{B'}(u'_{\g}u'_{\h} \otimes_{\C} {a''}\op_{\g}\ast {a''}\op_{\h})m',
\end{align*}
and
\begingroup
\allowdisplaybreaks
\begin{align*}
((a_{\g}&\otimes_{\C} {a''}\op_{\g})(a_{\h}\otimes_{\C} {a''}\op_{\h}))(m\otimes_{B'} m')\\
  = & (a_{\g}a_{\h}\otimes_{\C} {a''}\op_{\g}\ast {a''}\op_{\h})(m\otimes_{B'} m')\\
  = & (a_{\g}a_{\h}\otimes_{\C} (u'^{-1}_{\g\h})\op) m\otimes_{B'} (u'_{\g\h} \otimes_{\C} {a''}\op_{\g}\ast {a''}\op_{\h} )m';
\end{align*}
but $u'_{\g\h}$, $u'_{\g}$ and $u'_{\h}$ being invertible homogeneous elements, there exists  an invertible element $u'\in B'$ such that $u'_{\g\h}=u'u'_{\g}u'_{\h}$, henceforth
\begin{align*}
((a_{\g}&\otimes_{\C} {a''}\op_{\g})(a_{\h}\otimes_{\C} {a''}\op_{\h}))(m\otimes_{B'} m')\\
  = & (a_{\g}a_{\h}\otimes_{\C} ((u'u'_{\g}u'_{\h})^{-1})\op) m\otimes_{B'} (u'u'_{\g}u'_{\h} \otimes_{\C} {a''}\op_{\g}\ast {a''}\op_{\h} )m'\\
  = & (a_{\g}a_{\h}\otimes_{\C} (u'^{-1}_{\h}u'^{-1}_{\g}u'^{-1})\op) m\otimes_{B'} (u'u'_{\g}u'_{\h} \otimes_{\C} {a''}\op_{\g}\ast {a''}\op_{\h} )m'\\
  = & (a_{\g}a_{\h}\otimes_{\C} (u'^{-1})\op \ast (u'^{-1}_{\g})\op\ast(u'^{-1}_{\h})\op) m\otimes_{B'} (u'u'_{\g}u'_{\h} \otimes_{\C} {a''}\op_{\g}\ast {a''}\op_{\h} )m' \\
  = & (1\otimes_{\C} (u'^{-1})\op)(a_{\g}a_{\h}\otimes_{\C} (u'^{-1}_{\g})\op\ast(u'^{-1}_{\h})\op) m\otimes_{B'} (u' u'_{\g}u'_{\h}\otimes_{\C} {a''}\op_{\g}\ast {a''}\op_{\h} )m'\\
  = & (a_{\g}a_{\h}\otimes_{\C} (u'^{-1}_{\g})\op\ast(u'^{-1}_{\h})\op) m\otimes_{B'} (u'^{-1}\otimes_{\C} 1)(u'u'_{\g}u'_{\h} \otimes_{\C} {a''}\op_{\g}\ast {a''}\op_{\h} )m'\\
  = & (a_{\g}a_{\h}\otimes_{\C} (u'^{-1}_{\g})\op\ast(u'^{-1}_{\h})\op) m\otimes_{B'} (u'^{-1}u'u'_{\g}u'_{\h} \otimes_{\C} {a''}\op_{\g}\ast {a''}\op_{\h} )m'\\
  = & (a_{\g}a_{\h}\otimes_{\C} (u'^{-1}_{\g})\op\ast(u'^{-1}_{\h})\op) m\otimes_{B'} (u'_{\g}u'_{\h} \otimes_{\C} {a''}\op_{\g}\ast {a''}\op_{\h} )m'.
\end{align*}
\endgroup
We prove that $\widetilde{M}\otimes_{A'} \widetilde{M'}$ is a $\oG$-graded $(A,{A''})$-bimodule over $\C$. By definition, for $t,t'\in\oG$, $\widetilde{M}_t\otimes_{A'} \widetilde{M'}_{t'}$ is of degree $tt'\in\oG$. It is easy to check that this definition is  correct, and that that $\widetilde{M}\otimes_{A'} \widetilde{M'}$ is a $\oG$-graded $(A,{A''})$-bimodule. Consider, $c\in \C$, $x,t,t'\in \oG$ such that $x=tt'$ and $\widetilde{m}_t\in \widetilde{M}_t,\,\widetilde{m'}_{t'}\in \widetilde{M'}_{t'}$. We have:
\begingroup
\allowdisplaybreaks
\[
\begin{array}{rcl}
(\widetilde{m}_t\otimes_{A'}\widetilde{m'}_{t'} )c & = & \widetilde{m}_t\otimes_{A'}\widetilde{m'}_{t'} c\\
 & = & \widetilde{m}_t\otimes_{A'}\tensor*[^{t'}]{c}{}\widetilde{m'}_{t'} \\
 & = & \widetilde{m}_t\tensor*[^{t'}]{c}{}\otimes_{A'}\widetilde{m'}_{t'} \\
 & = & \tensor*[^{tt'}]{c}{}\widetilde{m}_t\otimes_{A'}\widetilde{m'}_{t'} \\
 & = & \tensor*[^{x}]{c}{}\widetilde{m}_t\otimes_{A'}\widetilde{m'}_{t'}\\
 & = & \tensor*[^{x}]{c}{}(\widetilde{m}_t\otimes_{A'}\widetilde{m'}_{t'}).
\end{array}
\]
Finally, by using Proposition \ref{prop:equivalent_functors}, we get the isomorphisms
\[
\begin{array}{rcl}
\widetilde{M}\otimes_{A'} \widetilde{M'} &\simeq& A\otimes_B M\otimes_{A'} A'\otimes_{B'}M'\\
&\simeq& A\otimes_B M\otimes_{B'}M'\\
&\simeq& A\otimes_B \para{M\otimes_{B'}M'}\\
&\simeq& \widetilde{M\otimes_{B'} M'}
\end{array}\]
\endgroup
of $\oG$-graded $(A,{A''})$-bimodules over $\C$ .

(2) We begin by proving that $\Hom{B'}{M}{M'}$ is a $\Delta(A\otimes_{\C} {A''}\op)$-module. We verify that the given definition does not depend on the choice of homogeneous invertible elements $u'_{\g}\in A'_{\g}$. Indeed, let $v'_{\g}$ be another invertible homogeneous element of $A'_{\g}$, hence there exists an invertible element of $B'$, ${b'}$, such that $v'_{\g}=u'_{\g}{b'}$. We compute:
\[
\begin{array}{rl}
	(v'_{\g}\otimes_{\C} {a''}\op_{\g})f((v'^{-1}_{\g}\otimes_{\C} a_{\g^{-1}}\op)m)
	&= (u'_{\g}{b'}\otimes_{\C} {a''}\op_{\g})f\left(((u'_{\g}{b'})^{-1}\otimes_{\C} a_{\g^{-1}}\op)m\right)\\
	&= (u'_{\g}{b'}\otimes_{\C} {a''}\op_{\g})f\left(({b'}^{-1}u'^{-1}_{\g}\otimes_{\C} a_{\g^{-1}}\op)m\right)\\
	&= (u'_{\g}{b'}\otimes_{\C} {a''}\op_{\g})f\left({b'}^{-1}(u'^{-1}_{\g}\otimes_{\C} a_{\g^{-1}}\op)m\right)\\
	&= (u'_{\g}{b'}\otimes_{\C} {a''}\op_{\g}){b'}^{-1}f\left((u'^{-1}_{\g}\otimes_{\C} a_{\g^{-1}}\op)m\right)\\
	&= (u'_{\g}{b'}{b'}^{-1}\otimes_{\C} {a''}\op_{\g})f\left((u'^{-1}_{\g}\otimes_{\C} a_{\g^{-1}}\op)m\right)\\
	&= (u'_{\g}\otimes_{\C} {a''}\op_{\g})f\left((u'^{-1}_{\g}\otimes_{\C} a_{\g^{-1}}\op)m\right).
\end{array}
\]
We verify that $(a_{\g}\otimes_{\C} {a''}\op_{\g})f$ is  $B'$-linear. Indeed, for ${b'}\in B'$, we compute:
\begingroup
\allowdisplaybreaks
\[
\begin{array}{rcl}
	((a_{\g}\otimes_{\C} {a''}\op_{\g})f)({b'}m) & = & (u'_{\g}\otimes_{\C} {a''}\op_{\g})f((u'^{-1}_{\g}\otimes_{\C} a_{\g^{-1}}\op){b'}m)\\
	 & = & (u'_{\g}\otimes_{\C} {a''}\op_{\g})f((u'^{-1}_{\g}{b'}\otimes_{\C} a_{\g^{-1}}\op)m)\\
	 & = & (u'_{\g}\otimes_{\C} {a''}\op_{\g})f((u'^{-1}_{\g}{b'}u'_{\g}u'^{-1}_{\g}\otimes_{\C} a_{\g^{-1}}\op)m)\\
	 & = & (u'_{\g}\otimes_{\C} {a''}\op_{\g})f(u'^{-1}_{\g}{b'}u'_{\g}(u'^{-1}_{\g}\otimes_{\C} a_{\g^{-1}}\op)m);
\end{array}
\]
\endgroup
but $u'^{-1}_{\g}{b'}u'_{\g}\in B'$ and $f\in \Hom{B'}{M}{M'}$, thus
\begingroup
\allowdisplaybreaks
\[
\begin{array}{rcl}
	((a_{\g}\otimes_{\C} {a''}\op_{\g})f)({b'}m) 
	& = & (u'_{\g}\otimes_{\C} {a''}\op_{\g})u'^{-1}_{\g}{b'}u'_{\g}f((u'^{-1}_{\g}\otimes_{\C} a_{\g^{-1}}\op)m)\\
	& = & (u'_{\g}u'^{-1}_{\g}{b'}u'_{\g}\otimes_{\C} {a''}\op_{\g})f((u'^{-1}_{\g}\otimes_{\C} a_{\g^{-1}}\op)m)\\
	& = & ({b'}u'_{\g}\otimes_{\C} {a''}\op_{\g})f((u'^{-1}_{\g}\otimes_{\C} a_{\g^{-1}}\op)m)\\
	& = & {b'}(u'_{\g}\otimes_{\C} {a''}\op_{\g})f((u'^{-1}_{\g}\otimes_{\C} a_{\g^{-1}}\op)m)\\
	& = & {b'}((a_{\g}\otimes_{\C} {a''}\op_{\g})f)(m).
\end{array}
\]
\endgroup
We verify that $\Hom{B'}{M}{M'}$ is a $\Delta(A\otimes_{\C} {A''}\op)$-module. Let $\g,\,\h\in\oG$, $a_{\g}\in A_{\g},\,a_{\h}\in A_{\h},\,{a''}\op_{\g}\in {A''}\op_{\g},\,{a''}\op_{\h}\in {A''}\op_{\h},\,m\in M$, $f\in \Hom{B'}{M}{M'}$. We only check that
\[(((a_{\g}\otimes_{\C} {a''}\op_{\g})(a_{\h}\otimes_{\C} {a''}\op_{\h}))f)(m)=((a_{\g}\otimes_{\C} {a''}\op_{\g})((a_{\h}\otimes_{\C} {a''}\op_{\h})f))(m).\]
Indeed,
\begin{align*}
((a_{\g}&\otimes_{\C} {a''}\op_{\g})((a_{\h}\otimes_{\C} {a''}\op_{\h})f))(m)\\
 = & (u'_{\g}\otimes_{\C} {a''}\op_{\g}) ((a_{\h}\otimes_{\C} {a''}\op_{\h})f) ((u'^{-1}_{\g}\otimes_{\C} a_{\g^{-1}}\op)m)\\
 = & (u'_{\g}\otimes_{\C} {a''}\op_{\g})(u'_{\h}\otimes_{\C} {a''}\op_{\h}) f ((u'^{-1}_{\h}\otimes_{\C} a_{\h^{-1}}\op)(u'^{-1}_{\g}\otimes_{\C} a_{\g^{-1}}\op)m)\\
 = & (u'_{\g}u'_{\h}\otimes_{\C} {a''}\op_{\g}\ast {a''}\op_{\h}) f ((u'^{-1}_{\h}u'^{-1}_{\g}\otimes_{\C} a_{\h^{-1}}\op\ast a_{\g^{-1}}\op)m),
\end{align*}
and
\begin{align*}
(((a_{\g}&\otimes_{\C} {a''}\op_{\g})(a_{\h}\otimes_{\C} {a''}\op_{\h}))f)(m)\\
=& ((a_{\g}a_{\h}\otimes_{\C} {a''}\op_{\g}\ast {a''}\op_{\h})f)(m)\\
=& ((a_{\g}a_{\h}\otimes_{\C} ({a''}_{\h^{-1}}{a''}_{\g^{-1}})\op )f)(m)\\
=& (u'_{\g\h}\otimes_{\C}({a''}_{\h^{-1}}{a''}_{\g^{-1}})\op)f((u'^{-1}_{\g\h}\otimes_{\C} (a_{\g}a_{\h})\op)m);
\end{align*}
but $u'_{\g\h}$, $u'_{\g}$ and $u'_{\h}$ being invertible homogeneous elements, there exists an  invertible element $u'\in B'$ such that $u'_{\g\h}=u'_{\g}u'_{\h}u'$, henceforth
\begingroup
\allowdisplaybreaks
\begin{align*}
(((a_{\g}&\otimes_{\C} {a''}\op_{\g})(a_{\h}\otimes_{\C} {a''}\op_{\h}))f)(m)\\
=& (u'_{\g}u'_{\h}u'\otimes_{\C}({a''}_{\h^{-1}}{a''}_{\g^{-1}})\op)f(((u'_{\g}u'_{\h}u')^{-1}\otimes_{\C} (a_{\g}a_{\h})\op)m)\\
=& (u'_{\g}u'_{\h}\otimes_{\C}({a''}_{\h^{-1}}{a''}_{\g^{-1}})\op)u'f(((u'_{\g}u'_{\h}u')^{-1}\otimes_{\C} (a_{\g}a_{\h})\op)m)\\
=& (u'_{\g}u'_{\h}\otimes_{\C}({a''}_{\h^{-1}}{a''}_{\g^{-1}})\op)f(u'((u'_{\g}u'_{\h}u')^{-1}\otimes_{\C} (a_{\g}a_{\h})\op)m)\\
=& (u'_{\g}u'_{\h}\otimes_{\C}({a''}_{\h^{-1}}{a''}_{\g^{-1}})\op)f((u'(u'_{\g}u'_{\h}u')^{-1}\otimes_{\C} (a_{\g}a_{\h})\op)m)\\
=& (u'_{\g}u'_{\h}\otimes_{\C}({a''}_{\h^{-1}}{a''}_{\g^{-1}})\op)f((u'u'^{-1}u'^{-1}_{\h}u'^{-1}_{\g}\otimes_{\C} (a_{\g}a_{\h})\op)m)\\
=& (u'_{\g}u'_{\h}\otimes_{\C}({a''}_{\h^{-1}}{a''}_{\g^{-1}})\op)f((u'^{-1}_{\h}u'^{-1}_{\g}\otimes_{\C} (a_{\g}a_{\h})\op)m)\\
 = & (u'_{\g}u'_{\h}\otimes_{\C} {a''}\op_{\g}\ast {a''}\op_{\h}) f ((u'^{-1}_{\h}u'^{-1}_{\g}\otimes_{\C} a_{\h^{-1}}\op\ast a_{\g^{-1}}\op)m).
\end{align*}
\endgroup
We prove that $\Hom{A'}{\widetilde{M}}{\widetilde{M'}}$ is a $\oG$-graded $(A,{A''})$-bimodule over $\C$. By \cite[1.6.4.]{book:Marcus1999}, $\Hom{A'}{\widetilde{M}}{\widetilde{M'}}$ is a $\oG$-graded $(A,{A''})$-bimodule with the $x$-component ($x\in \oG$) defined as follows:
	\[\Hom{A'}{\widetilde{M}}{\widetilde{M'}}_x=\set{f\in \Hom{A'}{\widetilde{M}}{\widetilde{M'}}\mid f(\widetilde{M}_{\g})\subseteq \widetilde{M'}_{\g x}\text{ for all }\g\in\oG}.\]
	It remains to prove that $\Hom{A'}{\widetilde{M}}{\widetilde{M'}}$ is a $\oG$-graded $(A,{A''})$-bimodule over $\C$. Consider $x\in\oG$, $c\in \C$ and $f\in \Hom{A'}{\widetilde{M}}{\widetilde{M'}}_x$.
Let $\g\in\oG$ and $\tilde{m}_{\g}\in\tilde{M}_{\g}$. We show that
	\[(fc)(\tilde{m}_{\g})=(\tensor*[^{x}]{c}{}f)(\tilde{m}_{\g}).\]
Indeed, because $f\in \Hom{A'}{\widetilde{M}}{\widetilde{M'}}_x$ we have that $f\in \Hom{A'}{\widetilde{M}}{\widetilde{M'}}$, hence
\[(fc)(\tilde{m}_{\g})=f(\tilde{m}_{\g})c$ and $(\tensor*[^{x}]{c}{}f)(\tilde{m}_{\g})=f(\tilde{m}_{\g}\tensor*[^{x}]{c}{}).\]
We have that $f(\tilde{m}_{\g})\in \tilde{M}'_{\g x}$. Since $\tilde{M}'$ is a $\oG$-graded $(A',{A''})$-bimodule over $\C$, we have that \[(fc)(\tilde{m}_{\g})=\tensor*[^{\g x}]{c}{}f(\tilde{m}_{\g}).\]
On the right hand side, because  $\tilde{m}_{\g}\in\tilde{M}_{\g}$ and  $\tilde{M}$ is a $\oG$-graded $(A',A)$-bimodule over $\C$, we have that $(\tensor*[^{x}]{c}{}f)(\tilde{m}_{\g})=f(\tensor*[^{\g x}]{c}{}\tilde{m}_{\g})$, and since $f$ is $A'$-linear, we obtain that
\[(\tensor*[^{x}]{c}{}f)(\tilde{m}_{\g})=\tensor*[^{\g x}]{c}{}f(\tilde{m}_{\g}).\]
According to the previous step, we have that
	\[\Hom{A'}{\tilde{M}}{\tilde{M'}}_1=\set{f\in \Hom{A'}{\tilde{M}}{\tilde{M'}}\mid f(\tilde{M}_{\g})\subseteq \tilde{M'}_{\g}\text{ for all }\g\in\oG}=\Hom{A'\textrm{-}\mathrm{Gr}}{\tilde{M}}{\tilde{M}'}.\]
Because $A'$ is strongly $\oG$-graded, the   categories $B'\Md$ and  $A'\textrm{-}\mathrm{Gr}$  are equivalent. Therefore,
\[\Hom{A'\textrm{-}\mathrm{Gr}}{\tilde{M}}{\tilde{M}'}=\Hom{B'}{M}{M'}.\]
It follows that $\Hom{A'}{\tilde{M}}{\tilde{M'}}$ and $\widetilde{\Hom{B'}{M}{M'}}$ are isomorphic as $\oG$-graded $(A,{A''})$-bimodules over $\C$.
\end{proof}	

\section{\texorpdfstring{$\bar{G}$}{G}-graded Morita equivalences over \texorpdfstring{$\C$}{C}} \label{s:Morita}

\begin{subsec} \label{ss:assmptions}  Let $G'$ be a subgroup of $G$ and $N'$ a normal subgroup of $G'$. We assume that $N'=G'\cap N$ and $G=G'N$, hence  $\oG:=G/N=G'/(G'\cap N)\simeq G'/N'$.

Let $b\in Z(\O N)$ and ${b'}\in Z(\O N')$ be two $\oG$-invariant block idempotents. We denote
\[ A:=b\O G, \qquad A':={b'}\O G',\qquad B:=b\O N, \qquad B': = {b'}\O N'. \]
We know that $A$ and $A'$ are strongly $\oG$-graded algebras, with  1-compo\-nents $B$ and $B'$ respectively.

We also assume that $A$ and $A'$ are  $\oG$-graded algebras over the $\oG$-graded $\oG$-acted $\O$-algebra $\C$,  with structural maps $\zeta:\C\to C_A(B)$ and $\zeta':\C\to C_{A'}(B')$, as in Definition \ref{def:graded_algs_over_C}.

For instance, if we assume, in addition, that $C_G(N)\subseteq G'$, and denote $\bar{C}_G(N):=NC_G(N)/N$, then we may take  $\C:=\O C_G(N)$ (we do this later in this section), which is a strongly $\bar{C}_G(N)$-graded $\bar G$-acted algebra.
\end{subsec}

\begin{defi} Let $\tilde{M}$ be a $\oG$-graded $(A,A')$-bimodule over $\C$. Clearly, the $A$-dual $\tilde{M}^{\ast}=\Hom{A}{\tilde{M}}{A}$ of $\tilde{M}$ is a $\oG$-graded $(A',A)$-bimodule over $\C$. We say that  $\tilde{M}$ induces a $\oG$-\textit{graded Morita equivalence over} $\mathbf{\C}$ between $A$ and $A'$, if $\tilde{M}\otimes_{A'}\tilde{M}^{\ast}\cong A$ as $\oG$-graded $(A,A)$-bimodules over $\C$ and that $\tilde{M}^{\ast}\otimes_{A}\tilde{M}\cong A'$ as $\oG$-graded $(A',A')$-bimodules over $\C$.
\end{defi}

\begin{theorem} \label{th:Morita_extension} Assume that the $(B,B')$-bimodule $M$ and its the $B$-dual $M^{\ast}=\Hom{B}{M}{B}$  induce a Morita equivalence between $B$ and $B'$:
	\[	\xymatrix@C+=3cm{
	B\md \ar@<+.5ex>@{->}[r]^{M^{\ast}\otimes_B -} & \ar@<+.5ex>@{->}[l]^{M\otimes_{B'}-} B'\md	
	}		\]
	If $M$ extends to a $\Delta^{\C}$-module, then we have the following:
	\begin{enumerate}
		\item[{\rm(1)}] $M^{\ast}$ becomes a $\Delta(A'\otimes_{\C} A\op)$-module;
		\item[{\rm(2)}] $\tilde{M}:=(A\otimes_{\C} {A'}\op)\otimes_{\Delta^{\C}} M$ is a $\oG$-graded $(A,A')$-bimodule over $\C$, $\tilde{M}^{\ast}\simeq(A'\otimes_{\C} A\op)\otimes_{\Delta(A'\otimes_{\C} A\op)} M^{\ast}$ as $\oG$-graded $(A',A)$-bimodules over $\C$, and they induce a $\oG$-graded Morita equivalence over $\C$ between $A$ and $A'$.
	\end{enumerate}
\end{theorem}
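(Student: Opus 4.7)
The strategy is to transport the Morita equivalence between $B$ and $B'$ through the categorical equivalences of Theorem~\ref{th:equivalent_categories}, Proposition~\ref{prop:equivalent_functors}, and Proposition~\ref{lemma:hom}, so that everything reduces to essentially two invocations of Proposition~\ref{lemma:hom}.

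To prove~(1), I first observe that $A$, viewed as a $\oG$-graded $(A,A)$-bimodule over $\C$ in the obvious way, has $1$-component equal to $B$; by Theorem~\ref{th:equivalent_categories}, $B$ therefore acquires a canonical $\Delta(A\otimes_{\C}A\op)$-module structure, and Proposition~\ref{prop:equivalent_functors} gives $\widetilde{B}\simeq A\otimes_B B\simeq A$. I then apply Proposition~\ref{lemma:hom}(2) after relabelling the algebras appearing there as $(A,A',A'')\leftarrow(A',A,A)$ (keeping $\C$ fixed): the first hypothesis is satisfied by our $M$ (a $\Delta(A\otimes_{\C}A'\op)$-module by assumption) and the second is satisfied by $B$. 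The conclusion is precisely that $\Ms=\Hom{B}{M}{B}$ becomes a $\Delta(A'\otimes_{\C}A\op)$-module, which establishes~(1). The natural isomorphism in the same Proposition simultaneously yields
\[(A'\otimes_{\C}A\op)\otimes_{\Delta(A'\otimes_{\C}A\op)}\Ms\;\simeq\;\Hom{A}{\widetilde{M}}{\widetilde{B}}\;=\;\Hom{A}{\widetilde{M}}{A}\;=\;\widetilde{M}^{\ast}\]
of $\oG$-graded $(A',A)$-bimodules over $\C$, which is the first assertion of~(2).

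For the Morita equivalence in~(2), the hypothesis supplies $(B,B)$- and $(B',B')$-bimodule isomorphisms $\varepsilon:M\otimes_{B'}\Ms\to B$ and $\eta:\Ms\otimes_B M\to B'$. By Proposition~\ref{lemma:hom}(1), $M\otimes_{B'}\Ms$ carries a canonical $\Delta(A\otimes_{\C}A\op)$-module structure whose extension is $\widetilde{M}\otimes_{A'}\widetilde{\Ms}$, and analogously $\Ms\otimes_B M$ extends to $\widetilde{\Ms}\otimes_A\widetilde{M}$. Once $\varepsilon$ and $\eta$ are shown to be $\Delta$-linear, applying the extension functor produces the required isomorphisms $\widetilde{M}\otimes_{A'}\widetilde{\Ms}\simeq\widetilde{B}=A$ and $\widetilde{\Ms}\otimes_A\widetilde{M}\simeq\widetilde{B'}=A'$ of $\oG$-graded bimodules over $\C$, which is exactly a $\oG$-graded Morita equivalence over $\C$.

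The main technical obstacle is verifying $\Delta$-linearity of $\varepsilon$ and $\eta$. The $\Delta(A\otimes_{\C}A\op)$-action on $M\otimes_{B'}\Ms$ furnished by Proposition~\ref{lemma:hom}(1) involves the auxiliary invertible elements $u'_{\g},u'^{-1}_{\g}$, which are absent from the conjugation-type action $(a_{\g}\otimes_{\C}a\op_{\g})\cdot b=a_{\g}b a_{\g^{-1}}$ on $B\subseteq A$. I would check $\Delta$-linearity directly: substituting the formulas from Proposition~\ref{lemma:hom}(1) and~(2), evaluating $\varepsilon(m\otimes f)=f(m)$, and using $B'$-linearity of $f\in\Ms$, the factors $u'_{\g}$ and $u'^{-1}_{\g}$ telescope against each other and leave exactly $a_{\g}f(m)a_{\g^{-1}}$; a symmetric computation handles $\eta$. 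With $\Delta$-linearity in hand, Theorem~\ref{th:equivalent_categories} transports these $\Delta$-module isomorphisms into the asserted isomorphisms of $\oG$-graded bimodules over $\C$, as required.
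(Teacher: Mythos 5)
Your proof of part (1) and of the identification $\widetilde{M}^\ast\simeq(A'\otimes_{\C}A\op)\otimes_{\Delta(A'\otimes_{\C}A\op)}M^\ast$ via Proposition~\ref{lemma:hom}(2) is essentially what the paper does, so that portion matches. Where you diverge is in establishing the Morita equivalence itself: the paper, having observed via Remark~\ref{remark:o_z_c} that the $\Delta^{\C}$-structure restricts to a $\Delta^{\O}$-structure, simply invokes \cite[Theorem 5.1.2]{book:Marcus1999} to get that $A\otimes_B M$ and $A'\otimes_{B'}M^\ast$ already induce a $\oG$-graded Morita equivalence between $A$ and $A'$, and then notes that the ``over $\C$'' refinement comes for free. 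The decisive observation behind that free step is Definition~\ref{def:graded_bimodules_over_c}(b): the morphisms in $A\textrm{-Gr}/\C\textrm{-}A$ are nothing more than $\oG$-graded bimodule morphisms, so once the objects $\tilde{M}\otimes_{A'}\tilde{M}^\ast$, $\tilde{M}^\ast\otimes_A\tilde{M}$, $A$, $A'$ are known to lie in the over-$\C$ category (which Proposition~\ref{lemma:hom}(1) and a direct check for $A$ supply), \emph{any} graded-bimodule isomorphism between them is automatically an isomorphism over $\C$. No linearity of the specific unit and counit needs to be checked. Your approach instead verifies $\Delta$-linearity of $\varepsilon$ and $\eta$ directly and then transports through the equivalence of Theorem~\ref{th:equivalent_categories}; this works, and your computation for $\varepsilon(m\otimes f)=f(m)$ does telescope correctly (in fact both the $u'_{\g}$ and the $u_{\g}$ factors cancel), but it amounts to re-deriving a part of Marcus's Theorem 5.1.2 by hand. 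It also leaves a loose end: the map $\eta:M^\ast\otimes_B M\to B'$ is not given by a formula symmetric to $\varepsilon$ (it is defined implicitly by $m'\,\eta(f\otimes m)=f(m')m$, or via the isomorphism $B'\simeq\mathrm{End}_B(M)\op$), so ``a symmetric computation'' does not literally apply and a separate, somewhat more involved verification would be needed. In short: your route is correct in outline and more explicit, but the paper's route is shorter because it delegates the graded Morita step to a cited theorem and exploits that the over-$\C$ condition is a \emph{property} of a graded bimodule rather than extra structure, making the compatibility of the isomorphisms automatic.
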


\begin{proof} (1) Because $M$ is a $\Delta^{\C}$-module, and  $B$ is a $\Delta(A\otimes_{\C} A\op)$-module, by Proposition \ref{lemma:hom} (2) we obtain that $M^{\ast}:=\Hom{B}{M}{B}$ is a $\Delta(A'\otimes_{\C} A\op)$-module.

(2) Given that $M$ is a $\Delta^{\C}$-module and that $M^{\ast}$ is a $\Delta(A'\otimes_{\C} A\op)$-module, by Theorem \ref{th:equivalent_categories}, we obtain that $\tilde{M}:=(A\otimes_{\C} {A'}\op)\otimes_{\Delta^{\C}} M$ is a $\oG$-graded $(A,A')$-bimodule over $\C$ and $\tilde{M}^{\ast}:=(A'\otimes_{\C} A\op)\otimes_{\Delta(A'\otimes_{\C} A\op)} M^{\ast}$ is a $\oG$-graded $(A',A)$-bimodule over $\C$. Moreover, by Proposition \ref{prop:equivalent_functors}, we have that $\tilde{M}\simeq A\otimes_B M$ as $\oG$-graded $(A,A')$-bimodules over $\C$ and that $\tilde{M}^{\ast}\simeq A'\otimes_{B'}M^{\ast}$ as $\oG$-graded $(A',A)$-bimodules over $\C$.
\par Now, because $M$ extends to a $\Delta^{\C}$-module, by the restriction of scalars in the algebra homomorphism from Remark \ref{remark:o_z_c}, we obtain that $M$ is a $\Delta^{\O}$-module. By \cite[Theorem 5.1.2.]{book:Marcus1999} we obtain that $A\otimes_B M$ and $A'\otimes_{B'}M^{\ast}$ give a $\oG$-graded Morita equivalence between $A$ and $A'$, hence $\tilde{M}$ and $\tilde{M}^{\ast}$ give a $\oG$-graded Morita equivalence between $A$ and $A'$, and they are both $\oG$-graded bimodules over $\C$.
\end{proof}

\begin{remark} \label{r:truncateMorita}	$\oG$-graded Morita equivalence over $\C$ can be truncated \cite[Corollary 5.1.4.]{book:Marcus1999}. In our case, consider $\tilde{M}$ a $\oG$-graded $(A,A')$-bimodule over $\C$ and $\tilde{M}^{\ast}$ its $A$-dual. Assume that $\tilde{M}$ and $\tilde{M}^{\ast}$ induce a $\oG$-graded Morita equivalence over $\C$ between $A$ and $A'$. Let $\bar{H}$ be a subgroup of $\oG$. Then $\tilde{M}_{\bar{H}}:=\bigoplus_{\h\in\bar{H}}\tilde{M}_{\h}$ and $\tilde{M}_{\bar{H}}^{\ast}$ induce a $\bar{H}$-graded Morita equivalence over $\C_{\bar{H}}$ between $A_{\bar{H}}$ and $A'_{\bar{H}}$.
\end{remark}

\begin{subsec} \label{ss:notE(U)} If $U$ is a $B$-module, we denote by
\[E(U):=\mathrm{End}_A(A\otimes_{B}U)\op\]
the $\oG$-graded endomorphism algebra of the $A$-module induced from $U$.
\end{subsec}

\begin{prop} \label{prop:commutative_diagram} Assume that $\tilde{M}$ induces a $\oG$-graded  Morita equivalence over $\C$ between $A$ and $A'$.
Let $U$ be a  $B$-module and let $U'=M^{\ast}\otimes_B U$ be the $B'$-module corresponding to $U$ under the given equivalence.
Then there is a commutative diagram:
\[\xymatrix@C+=3cm{
		E(U) \ar@{->}[r]^{\sim} & E(U')\\
		C_A(B) \ar@{->}[r]^{\sim} \ar@{->}[u] & C_{A'}(B') \ar@{->}[u]\\
	    \C  \ar@{->}[u] \ar@{=}[r]^{\mathrm{id}_{\C}} & \C. \ar@{->}[u]
}\]
\end{prop}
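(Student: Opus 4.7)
The plan is to build the two horizontal isomorphisms from the given $\oG$-graded Morita equivalence and then verify commutativity of both squares. The top arrow will come directly from Morita transport of the induced modules; the middle arrow and the bottom-square commutativity will encode the ``over $\C$'' condition.

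First I would construct the top arrow $E(U)\to E(U')$. Applying the Morita functor $\tilde{M}^{\ast}\otimes_A-$ to the induced $A$-module $A\otimes_B U$, and using Proposition \ref{prop:equivalent_functors} to identify $\tilde{M}^{\ast}\simeq A'\otimes_{B'}M^{\ast}$ as a $\oG$-graded $(A',A)$-bimodule over $\C$, the image simplifies to $A'\otimes_{B'}M^{\ast}\otimes_B U=A'\otimes_{B'}U'$. Morita then supplies the desired $\oG$-graded $\O$-algebra isomorphism $E(U)\simeq E(U')$.

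Next I would construct the middle arrow $C_A(B)\to C_{A'}(B')$. The given equivalence realizes $A$ as $\mathrm{End}_{A'}(\tilde{M})\op$ via left multiplication on $\tilde{M}$, and the subalgebra $C_A(B)\subseteq A$ consists of those elements whose left action on $\tilde{M}$ additionally commutes with the left $B$-action. Using the identifications $\tilde{M}\simeq A\otimes_B M\simeq M\otimes_{B'}A'$ from Proposition \ref{prop:equivalent_functors}, together with the fact that $M=\tilde{M}_1$ induces a Morita equivalence between $B$ and $B'$, this ``commuting with $B$'' condition transports to ``commuting with $B'$'' on the $A'$-side; hence the Morita iso should restrict to the required iso $C_A(B)\simeq C_{A'}(B')$. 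I would then check that it preserves both the $\oG$-grading and the $\oG$-action, using that $\tilde{M}$ is $\oG$-graded over $\C$.

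Finally I would verify that both squares commute. The top square is essentially a naturality statement: by the very construction of the middle iso, right multiplication by $c\in C_A(B)$ on $A\otimes_B U$ is transported by Morita to right multiplication by its image $c'\in C_{A'}(B')$ on $A'\otimes_{B'}U'$. For the bottom square, the key input is the defining relation $\tilde{m}_{\bar{g}}\cdot c={}^{\bar{g}}c\cdot\tilde{m}_{\bar{g}}$ from Definition \ref{def:graded_bimodules_over_c}, which in the identity component reads $m\cdot\zeta'(c)=\zeta(c)\cdot m$ for all $c\in\C$ and $m\in M$; this identifies left multiplication by $\zeta(c)$ with right multiplication by $\zeta'(c)$ on $\tilde{M}$, so $\zeta(c)\mapsto\zeta'(c)$ under the Morita identification $A\simeq\mathrm{End}_{A'}(\tilde{M})\op$. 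The main obstacle will be verifying cleanly that the restriction of this Morita iso lands inside $C_{A'}(B')$ exactly on $C_A(B)$; I expect this to be a tracking-of-bimodule-structures argument that reduces, via the degree-$1$ Morita equivalence between $B$ and $B'$ given by $M$, to the classical statement that Morita equivalence between algebras induces an isomorphism of the centralizers of the $1$-components inside the ambient graded algebras.
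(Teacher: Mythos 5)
Your route is correct and is essentially the paper's, differing only in packaging. The paper's middle arrow is the explicit Morita~II dual-basis formula $\varphi_2(c)=\tilde\varphi\bigl(\sum_j m_j^{\ast} c\otimes_B m_j\bigr)$, with $m_j^{\ast}\in M^{\ast}=\tilde M^{\ast}_1$ and $\tilde\varphi:\tilde M^{\ast}\otimes_A\tilde M\to A'$, and commutativity of the bottom square is then a short computation: $m_j^{\ast}\zeta(c)=\zeta'(c)\,m_j^{\ast}$ (the over-$\C$ relation in $\tilde M^{\ast}$, in degree $1$) together with $(A',A')$-bilinearity of $\tilde\varphi$. You instead read off the dual degree-$1$ relation $\zeta(c)m=m\zeta'(c)$ on $M=\tilde M_1$ through the abstract identification $A\simeq\mathrm{End}_{A'}(\tilde M)\op$; both rest on the same observation. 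One imprecision to correct: your claim that left multiplication by $\zeta(c)$ equals right multiplication by $\zeta'(c)$ \emph{on all of $\tilde M$} is false outside degree $1$ --- for $\tilde m_{\g}\in\tilde M_{\g}$ one has $\tilde m_{\g}\zeta'(c)={}^{\g}\zeta(c)\,\tilde m_{\g}$, which equals $\zeta(c)\tilde m_{\g}$ only when $\g=1$ --- but the degree-$1$ identity is precisely what determines the transported element of $C_{A'}(B')$ (it is characterized by its effect on $M$ via the Morita equivalence between $B$ and $B'$), so the conclusion stands. Finally, the step you explicitly defer, namely that the Morita equivalence restricts to a $\oG$-graded $\oG$-acted isomorphism $C_A(B)\simeq C_{A'}(B')$ compatible with $E(U)\to E(U')$, is exactly what the paper outsources to its earlier reference for the upper square, so your proof would still require that input.
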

\begin{proof}
According to \cite[Proposition 3.4.]{article:MM2019}, the upper square of this diagram is commutative, where between $C_A(B)$ and $C_{A'}(B')$, we have the following construction:	By  Theorem  Morita II \cite[Theorem 12.12]{book:Faith1973}, we can choose the $\Delta^{\O}$-linear $\oG$-graded bimodule isomorphism:
\[\tilde{\varphi}:\tilde{M}^{\ast}\otimes_{A}\tilde{M}\to A',\]
and by its surjectivity, we may choose the finite set $J$ and the elements $m_j^{\ast}\in M^{\ast}$ and $m_j\in M$, for all $j\in J$ such that:
\[\tilde\varphi(\sum_{j\in J}{m_j^{\ast}\otimes_{B}m_j})=1_{B'}=1_{A'}.\]
	We define $\varphi_2:C_A(B)\to C_{A'}(B')$ as follows:
	\[\varphi_2(c)=\tilde{\varphi}(\sum\limits_{j\in J}{m_j^{\ast}c\otimes_{B}m_j}),\]
	for all $c\in C_A(B)$.
	\par In order to prove that the lower square of the diagram is commutative, we consider an element $c\in \C$ and we prove that $\varphi_2\para{\zeta\para{c}}=\zeta'\para{\mathrm{id}_{\C}\para{c}} \Leftrightarrow \varphi_2\para{\zeta\para{c}}=\zeta'\para{c}$. We have:
	\[\begin{array}{rcl}
		\varphi_2\para{\zeta\para{c}}& = & \tilde{\varphi}(\sum\limits_{j\in J}{m_j^{\ast}\zeta\para{c}\otimes_{B}m_j}),
\end{array}\]
but $m_j^{\ast}\in M^{\ast}=\tilde{M}^{\ast}_1$	and $\tilde{M}^{\ast}$ is a $\oG$-graded $(A',A)$-bimodule over $\C$, so:
	\[\begin{array}{rcl}
		\varphi_2\para{\zeta\para{c}}& = & \tilde{\varphi}(\sum\limits_{j\in J}{^{1}\zeta'\para{c}m_j^{\ast}\otimes_{B}m_j})\\
		& = & \tilde{\varphi}(\sum\limits_{j\in J}{\zeta'\para{c}m_j^{\ast}\otimes_{B}m_j})\\
		& = & \tilde{\varphi}(\zeta'\para{c}\sum\limits_{j\in J}{m_j^{\ast}\otimes_{B}m_j}),
\end{array}\]
but $\zeta'\para{c}\in C_{A'}(B')\subseteq A'$ and $\tilde{\varphi}$ is an $(A',A')$-bimodule isomorphism, thus:
\[\begin{array}{rcl}
		\varphi_2\para{\zeta\para{c}}& = & \zeta'\para{c}\tilde{\varphi}(\sum\limits_{j\in J}{m_j^{\ast}\otimes_{B}m_j})\\
		& = & \zeta'\para{c}.
\end{array}\]
Thus the statement is proved.
\end{proof}

\begin{subsec} We will give a version for Morita equivalences over $\mathcal{C}$ of the so-called ``butterfly theorem" \cite[Theorem 2.16]{ch:Spath2018}, based on \cite[Theorem 4.3.]{article:MM2019}. We start by adapting \cite[Proposition 4.2.]{article:MM2019}. We assume that $C_G(N)\subseteq G'$, and we denote $\bar{C}_G(N):=NC_G(N)/N$ and $\C:=\O C_G(N)$. We consider the algebras:
\[\xymatrix@C+=3cm{
	A:=b\O G & A':={b'}\O G'\\
	C:=b\O NC_G(N) \ar@{-}[r]_{\sim} \ar@{-}[u] & C':={b'}\O N'C_G(N) \ar@{-}[u]\\
	B:=b\O N \ar@{-}[r]^{\tensor*[_{B}^{}]{M}{_{B'}^{}}}_{\sim} \ar@{-}[u] & B':={b'}\O N'. \ar@{-}[u]
}\]
\end{subsec}

\begin{prop} \label{prop:extension_to_C} Let $\C=\O C_G(N)$, and assume that:
\begin{enumerate}
	\item[{\rm(1)}] $C_G(N)\subseteq G'$;
	\item[{\rm(2)}] $M$ induces a Morita equivalence between $B$ and $B'$;
	\item[{\rm(3)}] $zm=mz$, for all $m\in M$ and $z\in Z(N)$.
\end{enumerate}
Then there is a $\bar{C}_G(N)$-graded Morita equivalence between $C$ and $C'$ over $\C$, induced by the $\bar{C}_G(N)$-graded $(C,C')$-bimodule over $\C$:
\[C\otimes_{B}M\simeq M\otimes_{B'}C'\simeq (C\otimes_{\C} C'^{\mathrm{\op}})\otimes_{\Delta(C\otimes_{\C}C'^{\mathrm{\op}})}M.\]
\end{prop}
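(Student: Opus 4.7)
The plan is to apply Theorem~\ref{th:Morita_extension} with $C$ and $C'$ playing the roles of $A$ and $A'$ and with the grading group taken to be $\bar{C}_G(N) = NC_G(N)/N$. First I would observe that $C = b\O NC_G(N)$ is naturally a $\bar{C}_G(N)$-graded crossed product with identity component $B = b\O N$: the isomorphism $NC_G(N)/N \simeq C_G(N)/(C_G(N)\cap N) = C_G(N)/Z(N)$ supplies a family $\{bg : g \in C_G(N)\}$ of invertible homogeneous elements, one per coset of $Z(N)$. Hypothesis~(1), $C_G(N) \subseteq G'$, together with $Z(N) \subseteq N'$ yields the analogous statement for $C'$ with matching grading. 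The structural maps $\zeta: \C \to C_C(B)$ and $\zeta': \C \to C_{C'}(B')$ are simply $c \mapsto bc$ and $c \mapsto b'c$; they are grading- and $\bar{C}_G(N)$-action-preserving, so $C$ and $C'$ become $\bar{C}_G(N)$-graded crossed products over $\C$ in the sense of Definition~\ref{def:graded_algs_over_C}.

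The heart of the argument is to extend the $(B,B')$-bimodule $M$ to a $\Delta^{\C}$-module, where now
\[
\Delta^{\C} = \bigoplus_{\bar g \in \bar{C}_G(N)} C_{\bar g} \otimes_{\C} {C'}^{\op}_{\bar g^{-1}}.
\]
By Theorem~\ref{th:equivalent_categories}, this is equivalent to constructing a $\bar{C}_G(N)$-graded $(C,C')$-bimodule $\tilde M$ over $\C$ with $\tilde M_{\bar 1} = M$. I would take
\[
\tilde M := \O C_G(N) \otimes_{\O Z(N)} M,
\]
graded by $\tilde M_{\bar g} := \O(gZ(N)) \otimes_{\O Z(N)} M$ for $g \in C_G(N)$ representing $\bar g$, with left $C$-action and right $C'$-action defined on generators by
\[
(bnc)\cdot(g\otimes m) := cg\otimes nm, \qquad (g\otimes m)\cdot(b'n'c) := gc \otimes mn',
\]
for $n \in N$, $n' \in N'$, and $c,g \in C_G(N)$. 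The $\O Z(N)$-balance of the left action follows from $Z(N) \subseteq Z(C_G(N))$ together with $[Z(N), N] = 1$; the $\O Z(N)$-balance of the right action is precisely where hypothesis~(3) enters, combined with the fact that $Z(N)$ centralizes $N' \subseteq N$. A short computation then shows that left and right actions commute, giving $\tilde M$ the structure of a $(C,C')$-bimodule, and that the compatibility $\tilde m_{\bar g}\cdot c = {}^{\bar g}c \cdot \tilde m_{\bar g}$ for $c \in \C$ holds.

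Once the $\Delta^{\C}$-module structure on $M$ is established, Theorem~\ref{th:Morita_extension} yields the $\bar{C}_G(N)$-graded Morita equivalence over $\C$ induced by $\tilde M$ and its $C$-dual, and Proposition~\ref{prop:equivalent_functors} identifies $\tilde M$ with each of the three forms asserted in the statement:
\[
\tilde M \simeq (C \otimes_{\C} {C'}^{\op}) \otimes_{\Delta^{\C}} M \simeq C \otimes_B M \simeq M \otimes_{B'} C'.
\]

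The main obstacle is the verification that $\tilde M$ is well-defined, and in particular that the right $C'$-action is $\O Z(N)$-balanced. This is the single step at which all three hypotheses have to conspire: hypothesis~(1) provides the embedding of $C_G(N)$ into $G'$ (so that $C'$ makes sense as a $\bar{C}_G(N)$-graded algebra), hypothesis~(2) is needed only at the end to invoke Theorem~\ref{th:Morita_extension}, and hypothesis~(3) is exactly what is needed to move $Z(N)$ past $M$ inside the tensor product. Once this verification is done, the proposition follows at once from the previously established machinery.
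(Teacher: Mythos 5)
Your proof is correct and follows the same overall strategy as the paper's: establish that $C\otimes_B M$ is a $\bar C_G(N)$-graded $(C,C')$-bimodule over $\C$ (with $1$-component $M$), then invoke the machinery of Sections~2 and~3 to produce the graded Morita equivalence over $\C$. The difference is in how the graded bimodule structure is obtained. The paper cites \cite[Proposition 4.2]{article:MM2019} as a black box to get the $\bar C_G(N)$-graded $(C,C')$-bimodule structure on $C\otimes_B M$, and then spends a single line verifying the ``over $\C$'' compatibility using hypothesis (3). You instead realize the same object explicitly as $\tilde M=\O C_G(N)\otimes_{\O Z(N)}M$, spell out the left $C$- and right $C'$-actions on generators, and check the axioms directly, after which Theorem~\ref{th:Morita_extension} and Proposition~\ref{prop:equivalent_functors} give the three asserted forms. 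Your version is more self-contained; the trade-off is that the routine verifications (well-definedness, bimodule axiom, grading compatibility, and the ``over $\C$'' condition) must all be checked by hand, whereas the paper delegates the bulk of them to the companion reference. One small imprecision worth flagging: the place hypothesis (3) is genuinely needed is the well-definedness of the right $C'$-action with respect to the non-unique factorization $n'c$ of elements of $N'C_G(N)$ coming from $N'\cap C_G(N)=Z(N)$ — one must move $z\in Z(N)$ past $m$, where $zm=mz$ does the work — rather than the $\O Z(N)$-balance of the tensor $\O C_G(N)\otimes_{\O Z(N)}M$ itself (which only uses $Z(N)\subseteq Z(C_G(N))$ and $[Z(N),N]=1$). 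Your phrasing blurs these two issues, but since the computation you allude to does go through, the argument is sound.
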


\begin{proof} It is straightforward that $\C:=\O C_G(N)$ is a $\bar{C}_G(N)$-graded $\bar{C}_G(N)$-acted algebra and that there exist $\bar{C}_G(N)$-graded $\bar{C}_G(N)$-acted homomorphisms $\zeta:\C\to C_C(B)$ and $\zeta':\C\to C_{C'}(B')$.
	
Now, we prove that there is a $\bar{C}_G(N)$-graded Morita equivalence over $\C$ between $C$ and $C'$.  It suffices to prove that $C\otimes_{B}M$ is actually a $\bar{C}_G(N)$-graded $(C,C')$-bimodule over $\C$.
	
By \cite[Proposition 4.2.]{article:MM2019}, we know that $C\otimes_{B}M$ is actually a $\bar{C}_G(N)$-graded $(C,C')$-bimodule.
Moreover, observe that for all $\g\in\oG$, $a_{\g}\in C_{\g}$, $m\in M$ and $c\in \C$, we have:
\[(a_{\g}\otimes_{B} m)c  =  a_{\g}c\otimes_{B} m =  \tensor*[^{\g}]{c}{}(a_{\g}\otimes_{B} m).\]
Consequently, $C\otimes_{B}M$ is a $\bar{C}_G(N)$-graded $(C,C')$-bimodule over $\C$.
\end{proof}

\begin{theorem}[Butterfly theorem] \label{th:butterfly} Let $\hat G$ be another group with normal subgroup $N$, such that the block $b$ is also $\hat G$-invariant. Let ${\C}=\O C_{{G}}(N)$. Assume that:
\begin{enumerate}
\item[{\rm(1)}] $C_G(N)\subseteq G'$,
\item[{\rm(2)}] $\tilde M$ induces  a $\oG$-graded Morita equivalence over $\C$ between $A$ and $A'$;
\item[{\rm(3)}] the conjugation maps $\varepsilon:G\to \mathrm{Aut}(N)$ and $\hat{\varepsilon}:\hat{G}\to \mathrm{Aut}(N)$ satisfy $\varepsilon(G)=\hat{\varepsilon}(\hat{G})$.
\end{enumerate}
Denote $\hat{G}'=\hat{\varepsilon}^{-1}(\varepsilon(G'))$. Then there is a $\hat G/N$-graded Morita equivalence over $\hat{\C}:=\O C_{\hat{G}}(N)$ between $\hat{A}:=b\O \hat{G}$ and $\hat{A}':={b'}\O \hat{G}'$.
\end{theorem}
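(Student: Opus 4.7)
The strategy is to follow the butterfly construction of \cite[Theorem 4.3.]{article:MM2019}, carefully tracking the ``over $\C$'' structure along the way.

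\emph{Step 1 (Fibered product).} Form
\[L:=\set{(g,\hat g)\in G\times \hat G\mid \varepsilon(g)=\hat\varepsilon(\hat g)},\]
with projections $\pi\colon L\to G$ and $\hat\pi\colon L\to\hat G$. Hypothesis (3) makes both surjective, with
\[\ker\pi=\set{1}\times C_{\hat G}(N),\qquad \ker\hat\pi=C_G(N)\times\set{1}.\]
The diagonal $\Delta N:=\set{(n,n)\mid n\in N}$ is a normal subgroup of $L$, giving a compatible embedding $N\hookrightarrow L$. Using hypothesis (1) and the definition $\hat G'=\hat\varepsilon^{-1}(\varepsilon(G'))$, a direct verification shows $L':=\pi^{-1}(G')=\hat\pi^{-1}(\hat G')$, and $\Delta N'\subseteq L'$. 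A short calculation also yields $C_L(N)=C_G(N)\times C_{\hat G}(N)$.

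\emph{Step 2 (Lift to $L$).} Let $\C_L:=\O C_L(N)$, viewed as an $L/\Delta N$-graded $L/\Delta N$-acted $\O$-algebra. The blocks $b,b'$ are $L$- and $L'$-invariant respectively, so $b\O L$ and $b'\O L'$ are strongly $L/\Delta N$-graded crossed products with $1$-components $B,B'$. By Theorem \ref{th:equivalent_categories}, the bimodule $\tilde M$ corresponds to the $\Delta^{\C}$-module $M_0:=\tilde M_1$, which induces the underlying Morita equivalence between $B$ and $B'$. The surjection $\hat\pi$ restricts to a surjective $\O$-algebra map $\C_L\twoheadrightarrow\C$, and $\pi$ identifies $\C$ with a subalgebra of $\C_L$; we use these to extend $M_0$ to a $\Delta^{\C_L}$-module. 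Theorem \ref{th:Morita_extension} then promotes $M_0$ to an $L/\Delta N$-graded Morita equivalence over $\C_L$ between $b\O L$ and $b'\O L'$, realised concretely by $\tilde M_L:=b\O L\otimes_B M_0$.

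\emph{Step 3 (Descend to $\hat G$).} Because $\ker\hat\pi=C_G(N)\times\set{1}\subseteq C_L(N)$, the quotient maps identify $L/\ker\hat\pi\cong\hat G$, $L'/\ker\hat\pi\cong\hat G'$, and $C_L(N)/\ker\hat\pi\cong C_{\hat G}(N)$. Reducing $b\O L$, $b'\O L'$, $\C_L$ and $\tilde M_L$ modulo the ideal generated by $\ker\hat\pi - 1$ therefore produces $\hat A$, $\hat A'$, $\hat\C$ and an $\hat G/N$-graded $(\hat A,\hat A')$-bimodule over $\hat\C$. Since the left and right actions of $\ker\hat\pi$ on $\tilde M_L$ coincide (both factor through the structure map to $\C_L$, which commutes with $B$), the Morita equivalence survives the quotient, and the three equivalent descriptions of Proposition \ref{prop:equivalent_functors} transport to the $\hat G/N$-graded setting.

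\emph{Main obstacle.} The delicate step is Step 2: one must verify that the extension of $M_0$ from a $\Delta^{\C}$-module to a $\Delta^{\C_L}$-module is compatible with the structure maps of Definition \ref{def:graded_algs_over_C}, and that the ``over $\C$'' condition of Definition \ref{def:graded_bimodules_over_c}(3) lifts to an ``over $\C_L$'' condition on $\tilde M_L$. This requires establishing a version of the commutative square of Proposition \ref{prop:commutative_diagram} for the maps $\C_L\to C_{b\O L}(B)$ and $\C\to C_A(B)$, ensuring that the action of the new elements of $\C_L$ coming from $C_{\hat G}(N)$ agrees on $\tilde M_L$ with an action already implicit in the bimodule structure. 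Once this lifting is accomplished, the descent in Step 3 is essentially formal, in the spirit of Remark \ref{r:truncateMorita}.
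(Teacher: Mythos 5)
Your strategy is genuinely different from the paper's, and it has a real gap at the crux.

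The paper's proof is short and modular: it first cites the group-theoretic facts from the proof of the Sp\"ath butterfly theorem (that $\hat G' \le \hat G$, $C_{\hat G}(N)\le \hat G'$, $\hat G = N\hat G'$, $N' = N\cap\hat G'$), then invokes \cite[Theorem 4.3]{article:MM2019} -- the butterfly theorem for $\bar G$-graded Morita equivalences \emph{without} the over-$\C$ decoration -- to get that $\hat A\otimes_B M$ already induces a $\hat G/N$-graded Morita equivalence between $\hat A$ and $\hat A'$. The ``over $\hat\C$'' structure is then added at the very end by re-running the short computation from the proof of Proposition~\ref{prop:extension_to_C}: hypothesis (2) gives $zm = mz$ for all $z\in Z(N)$ and $m\in M$ (this is exactly condition $(3')$ for $\tilde M$ restricted to $\Z=\O Z(N)\subseteq\C$), and that single identity is what makes $\hat A\otimes_B M$ a bimodule over $\hat\C$. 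Nothing is lifted to a fibered product; the existing non-over-$\C$ butterfly is used as a black box and only a thin over-$\C$ layer is added afterward.

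Your Step 2 is where the proposal breaks down. You want to promote $M_0 := \tilde M_1$ from a $\Delta^{\C}$-module to a $\Delta^{\C_L}$-module, where $\C_L = \O C_L(N) = \O\bigl(C_G(N)\times C_{\hat G}(N)\bigr)$ and $\Delta^{\C_L}$ is built from $b\O L$ and $b'\O L'$ graded by $L/\Delta N$. The grading group $L/\Delta N$ is strictly larger than $\bar G$ (it surjects onto $\bar G$ with kernel isomorphic to $C_{\hat G}(N)/Z(N)$, and onto $\hat G/N$ with kernel isomorphic to $C_G(N)/Z(N)$), and the algebra $\Delta^{\C_L}$ has homogeneous components indexed by $L/\Delta N$ that $\Delta^{\C}$ simply does not see. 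The surjection $\C_L\twoheadrightarrow\C$ points the wrong way for the extension you need: it would let you restrict a $\Delta(A\otimes_{\C_L}{A'}^{\mathrm{op}})$-action to a $\Delta^{\C}$-action, not the converse, and in any case does not supply the missing $L/\Delta N$-components. You flag this as the ``main obstacle'' but leave it unresolved, which means the proposal does not close. Once Step 2 is granted, Steps 1 and 3 would be fine, but they are also exactly the content that \cite[Theorem 4.3]{article:MM2019} already packages; by re-deriving it you take on the hardest part of the theorem and then stall on the additional over-$\C$ bookkeeping -- precisely the part the paper handles with a one-line appeal to the $zm=mz$ computation of Proposition~\ref{prop:extension_to_C}. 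If you want to keep the fibered-product route you must exhibit the $C_{\hat G}(N)$-action on $M_0$ concretely (e.g.\ by first proving it acts through $\O Z(N)$ and extends, using (3)), rather than asserting its existence.
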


\begin{proof}
\newcommand{\T}{\mathcal{T}}
Consider the following diagram:
\[\xymatrix@C+=1cm{
	\hat{A}:=b\O \hat{G} & A:=b\O G \ar@{-}[r]^{\tilde{M}}_{\sim} & A':={b'}\O G' & \hat{A}':={b'}\O \hat{G}'\\
	b\O NC_{\hat{G}}(N) \ar@{-}[u] & b\O NC_{G}(N) \ar@{-}[u] \ar@{-}[r]_{\sim}& {b'}\O N'C_{G}(N) \ar@{-}[u] & {b'}\O N'C_{\hat{G}}(N) \ar@{-}[u]\\
	 &B:=\O N b \ar@{-}[u] \ar@{-}[r]^{M}_{\sim} \ar@{-}[ul]& B':=\O N' {b'}. \ar@{-}[u] \ar@{-}[ur]&
}\]
By the proof of \cite[Theorem 2.16]{ch:Spath2018}, we have that $\hat{G}'\leq \hat{G}$, $C_{\hat{G}}(N)\leq \hat{G}'$, $\hat{G}= N \hat{G}'$ and $N'=N\cap \hat{G}'$.
By \cite[Theorem 4.3.]{article:MM2019} we have that $\hat{A}\otimes_{B} M$ induces a $\hat{G}/N$-graded Morita equivalence between $\hat{A}$ and $\hat{A}'$.
By the proof of Proposition \ref{prop:extension_to_C}, we have that $\hat{A}\otimes_{B} M$ is a $\hat G/N$-graded $(\hat{A},\hat{A}')$-bimodule over $\hat{\C}:=\O C_{\hat{G}}(N)$.
\end{proof}

\begin{subsec}  Condition (1) of Proposition \ref{prop:extension_to_C} may be replaced with the assumption that the $(B,B)$-bimodule $M$ is $\bar G$-invariant, that is, $A_{\bar g}\otimes_B M\otimes_{B'}A'_{{\bar g}^{-1}}\simeq M$ as $(B,B')$-bimodules, for all $\bar g\in\bar G$, to get a slightly stronger statement.

Let $\bar G[b]=G[b]/N$ be the stabilizer of $B$ as a $(B,B)$-bimodule, that is, $\bar G[b]$ is the largest subgroup $\bar H$ of $\bar G$ such that $C_A(B)_{\bar H}$ is a crossed product (see \cite[2.9]{article:Dade1973}). Then $\bar G[b]$ is a normal subgroup of $\bar G$, and in particular, we have that $C_G(N)\subseteq  G[b]$.
\end{subsec}

\begin{corollary} \label{cor:Gb} Assume that the $(B,B)$-bimodule $M$ is $\bar G$-invariant.  Assume also that $zm=mz$, for all $m\in M$ and $z\in Z(N)$. Then $\bar G[b]=\bar G[b']$ {\rm(}hence $C_G(N)\subseteq  G'${\rm)}, and there is a $\bar G[b]$-graded Morita equivalence over $\C:=\O C_G(N)$ between $A_{\bar G[b]}$ and $A'_{\bar G[b]}$.
\end{corollary}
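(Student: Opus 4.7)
The plan has three parts: establish $\bar G[b]=\bar G[b']$ by transporting the $\bar{G}$-invariance of $M$ through the Morita equivalence, read off the inclusion $C_G(N)\subseteq G'$, and then lift the $\bar C_G(N)$-graded Morita equivalence of Proposition \ref{prop:extension_to_C} to one of degree $\bar G[b]$ using the fact that $\bar G[b]$ acts on $B$ by inner automorphisms.

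First I would note that $\bar g\in \bar G[b]$ is equivalent to $A_{\bar g}\simeq B$ as $(B,B)$-bimodules (which is precisely the condition that $C_A(B)_{\bar g}$ contains a unit), and analogously $\bar g\in\bar G[b']$ iff $A'_{\bar g}\simeq B'$. The $\bar G$-invariance $A_{\bar g}\otimes_B M\otimes_{B'}A'_{\bar g^{-1}}\simeq M$ rearranges, using the invertibility of $A'_{\bar g^{-1}}$ as a $(B',B')$-bimodule, to $A_{\bar g}\otimes_B M\simeq M\otimes_{B'}A'_{\bar g}$; tensoring on the left with $M^{\ast}$ and using $M^{\ast}\otimes_B M\simeq B'$ then gives
\[M^{\ast}\otimes_B A_{\bar g}\otimes_B M\simeq A'_{\bar g}\]
as $(B',B')$-bimodules. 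Since $M^{\ast}\otimes_B(-)\otimes_B M$ is the Picard equivalence induced by $M$ and sends $B$ to $B'$, we deduce $A_{\bar g}\simeq B$ iff $A'_{\bar g}\simeq B'$, whence $\bar G[b]=\bar G[b']$. The general containment $\bar C_G(N)\subseteq \bar G[b]$, combined with the identification of $\bar G[b']$ as the image of $G'[b']\subseteq G'$ inside $G'/N'\simeq \bar G$, then forces $C_G(N)\subseteq G'$.

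With $C_G(N)\subseteq G'$ and the centrality condition $zm=mz$ in hand, Proposition \ref{prop:extension_to_C} produces a $\bar C_G(N)$-graded Morita equivalence over $\C$ between $C=b\O NC_G(N)$ and $C'={b'}\O N'C_G(N)$. To promote it to $\bar G[b]$, one observes that by the very definition of $\bar G[b]=\bar G[b']$, every component $C_A(B)_{\bar g}$ (and $C_{A'}(B')_{\bar g}$) with $\bar g\in \bar G[b]$ contains a unit. Choosing such units, together with isomorphisms $A_{\bar g}\otimes_B M\simeq M\otimes_{B'}A'_{\bar g}$ supplied by the $\bar G[b]$-invariance of $M$, equips $M$ with a $\Delta^{\C}_{\bar G[b]}:=\bigoplus_{\bar g\in\bar G[b]}A_{\bar g}\otimes_{\C}{A'}^{\mathrm{op}}_{\bar g^{-1}}$-module structure extending its original $\Delta^{\C}$-module structure; Theorem \ref{th:Morita_extension} applied to the $\bar G[b]$-graded crossed products $A_{\bar G[b]}$ and $A'_{\bar G[b]}$ then yields the claimed $\bar G[b]$-graded Morita equivalence over $\C$. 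The main obstacle is that the chosen units are only determined up to $Z(B)^{\ast}$, so the putative $\Delta^{\C}_{\bar G[b]}$-action on $M$ is a priori defined only up to a $Z(B)$-valued $2$-cocycle; the role of the hypothesis $zm=mz$ for $z\in Z(N)$ is precisely to force this cocycle to take values in $\C$, where it is absorbed by the $\C$-action already present on $M$, so the extension is well defined and the resulting equivalence is truly $\bar G[b]$-graded over $\C$.
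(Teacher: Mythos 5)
The first half of your argument --- transporting $A_{\bar g}$ through the Picard functor $M^*\otimes_B(-)\otimes_BM$ to obtain $A'_{\bar g}$, and deducing $\bar G[b]=\bar G[b']$ --- is exactly the paper's argument and is correct.

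Your route to the $\bar G[b]$-graded equivalence, however, diverges from the paper's and contains a genuine gap. The paper refers to the \emph{proof} of Proposition~\ref{prop:extension_to_C} (i.e.\ the argument showing that $C\otimes_BM$ carries a graded $(C,C')$-bimodule structure over $\C$, to be adapted to $A_{\bar G[b]}\otimes_BM$). You instead invoke Proposition~\ref{prop:extension_to_C} only in its stated $\bar C_G(N)$-graded form and then try to enlarge the grading to $\bar G[b]$ by equipping $M$ with a $\Delta^{\C}_{\bar G[b]}$-module structure so as to apply Theorem~\ref{th:Morita_extension}. To do this you choose units $u_{\bar g}\in C_A(B)_{\bar g}$ and $u'_{\bar g}\in C_{A'}(B')_{\bar g}$ for $\bar g\in\bar G[b]$ and correctly observe that these are determined only up to $Z(B)^{\ast}$ (resp.\ $Z(B')^{\ast}$), so a $2$-cocycle obstruction appears. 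But your key claim --- that the hypothesis $zm=mz$ for $z\in Z(N)$ forces this cocycle to take values in $\C$ and to be ``absorbed'' by the $\C$-action on $M$ --- is unsubstantiated. That hypothesis is precisely the $\C_1$-compatibility condition used in the proof of Proposition~\ref{prop:extension_to_C} to conclude that the graded bimodule lies over $\C$; it is a statement about the degree-one action of $\O Z(N)$ and says nothing about the $Z(B)^{\ast}$-valued cocycle attached to the crossed-product structure of $C_A(B)_{\bar G[b]}$ relative to $C_{A'}(B')_{\bar G[b]}$. That the cocycle obstruction vanishes (equivalently, that the two crossed-product cocycles match under the Morita isomorphism $Z(B)\simeq Z(B')$, so that $M$ extends to a $\Delta^{\C}_{\bar G[b]}$-module) is exactly the nontrivial content that must be supplied before Theorem~\ref{th:Morita_extension} becomes applicable beyond $\bar C_G(N)$; your argument asserts rather than proves it.

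A secondary point: neither you nor the paper spells out why $\bar G[b]=\bar G[b']$ yields $C_G(N)\subseteq G'$. From $C_G(N)N/N\subseteq\bar G[b]=\bar G[b']$ one gets only $C_G(N)\subseteq G'[b']N$, which is a priori weaker; if you want to keep this step, you need to say more.
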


\begin{proof} Indeed, recall that the functor $M^*\otimes_B(-)\otimes_BM$ is an equivalence which sends the $(B,B)$-bimodule $B$ to the $(B',B')$-bimodule $B'$. Since $M$ is $\bar G$-invariant, it also sends $A_{\bar g}$ to $A'_{\bar g}$, for all $\bar g\in \bar G$. This immediately implies that $\bar G[b]\subseteq\bar G[b']$, and by symmetry $\bar G[b]= G[b']$. The rest follows by the proof of Proposition \ref{prop:extension_to_C}.
\end{proof}

\begin{remark} \label{r:Gb} On the other hand, still without condition (1) of Proposition \ref{prop:extension_to_C}, assume that $\tilde M$ induces a  $\bar G$-graded Morita equivalence between $A$ and $A'$ (so in particular, $M$ is $\bar G$-invariant). Then, by \cite[Theorem 5.1.8]{book:Marcus1999}, $C_A(B)\simeq C_{A'}(B')$ and $\bar G[b]=\bar G[b']$. If, in addition, $zm=mz$, for all $m\in M$ and $z\in Z(N)$, then, by Corollary \ref{cor:Gb}, $\tilde M$ is a  $\bar G$-graded $(A,A')$-bimodule over $\C:=\O C_G(N)$ (in fact, even over $C_A(B)_{\bar G[b]}$).
\end{remark}

\section{Scott modules} \label{s:scott}

Koshitani and Lassueur constructed in \cite{article:KL1}  and \cite{article:KL2} Morita equivalences induced by certain Scott modules. We show here that their constructions can be extended to obtain group graded Morita  equivalences over $\C=\O C_G(N)$.

\begin{subsec} Let $Q$ be a Sylow $p$-subgroup of $G$ and let $G'=N_G(Q)$ and $N'=G'\cap N$. In this situation, we have that  $C_G(N)\subseteq G'$, and let $\C=\O C_G(N)$ and $Z=Z(N)$. Denote also
\[K=\{(g,g')\mid \bar g=\bar g' \textrm{ in } \bar G=G/N\simeq G'/N'\}.\]
  Let $b\in Z(\O N)$ and ${b'}\in Z(\O N')$ be the principal block idempotents, and denote
\[ A:=b\O G, \qquad A':={b'}\O G',\qquad B:=b\O N, \qquad B': = {b'}\O N'. \]
As in \cite{article:KL1}  and \cite{article:KL2}, we consider the Scott module $\operatorname{Sc}(N\times N',\Delta Q)$, and we refer to  \cite[Section 4.8]{book:NT1988} for the properties of Scott modules.
\end{subsec}

\begin{prop} With the above notations, assume that $p$ does not divide the order of $\bar G$ and denote $M=\operatorname{Sc}(N\times N',\Delta Q)$. Then
\[M\simeq \operatorname{Res}^K_{N\times N'}\operatorname{Sc}(K,\Delta Q).\]
In particular, $M$ may be regarded as a $\Delta(A\otimes {A'}^{\mathrm{op}})$-module, and moreover, $M$ may be chosen such that $\tilde M:=A\otimes_B M$ is a $\bar G$-graded $(A,A')$-bimodule  over $\mathcal{C}=\O C_G(N)$ between $A$ and $A'$.
\end{prop}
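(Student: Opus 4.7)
My plan has three stages. First, to prove $M\simeq\operatorname{Res}^K_{N\times N'}\operatorname{Sc}(K,\Delta Q)$: since $p\nmid|\bar G|=[K:N\times N']$, the Sylow $p$-subgroup $Q$ of $G$ lies in $N$, and as $Q\subseteq N_G(Q)=G'$ also in $N'=G'\cap N$; hence $\Delta Q\subseteq N\times N'\trianglelefteq K$. Mackey's formula gives
\[\operatorname{Res}^K_{N\times N'}\operatorname{Ind}^K_{\Delta Q}\mathcal{O}\simeq\bigoplus_{\bar x\in\bar G}\operatorname{Ind}^{N\times N'}_{{}^x\Delta Q}\mathcal{O},\]
whose $\bar x=1$ summand has $M$ as its unique Scott direct summand with trivial top quotient. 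Since $\operatorname{Sc}(K,\Delta Q)$ has the trivial $\mathcal{O}K$-module as top quotient, its restriction has the trivial $\mathcal{O}[N\times N']$-module as top quotient, which must lie in the $\bar x=1$ piece. Combined with the $K$-invariance of $M$ as a Scott $(B,B')$-bimodule and a Clifford-theoretic argument using that $|\bar G|$ is coprime to $p$, the restriction is indecomposable and hence isomorphic to $M$.

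Second, the map
\[\phi\colon\mathcal{O}K\longrightarrow\Delta(A\otimes_{\mathcal{O}}{A'}^{\mathrm{op}}),\qquad (g,g')\longmapsto bg\otimes(b'(g')^{-1})^{\mathrm{op}},\]
is a well-defined algebra homomorphism: the defining condition $\bar g=\bar g'$ places the image in the diagonal summand $A_{\bar g}\otimes A'_{\bar g^{-1}}$, and multiplicativity is a direct check using $b^{2}=b$, $(b')^{2}=b'$ and the op-multiplication. Via $\phi$ the $\mathcal{O}K$-module $\operatorname{Sc}(K,\Delta Q)$ acquires a $\Delta(A\otimes{A'}^{\mathrm{op}})$-module structure, and its restriction to $\mathcal{O}[N\times N']$ is exactly the $(B,B')$-bimodule $M$ equipped with the desired $\Delta$-structure.

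Third, to obtain $\tilde M=A\otimes_B M$ as a $\bar G$-graded $(A,A')$-bimodule over $\mathcal{C}$, the above $\Delta$-structure must descend to $\Delta^{\mathcal{C}}$. Unwinding the balance relations shows this descent is equivalent to the diagonal embedding $\Delta C_G(N)\hookrightarrow K$, $c\mapsto(c,c)$ (well-defined since $C_G(N)\subseteq G'$), acting trivially on $\operatorname{Sc}(K,\Delta Q)$. Now $Z(N)\subseteq N$ commutes with $C_G(N)$, and $C_G(N)/Z(N)$ embeds into $\bar G$; consequently the Sylow $p$-subgroup of $C_G(N)$ is central and equal to $Z(N)\cap Q$, and $C_G(N)=(Z(N)\cap Q)\times H_{p'}$ for some $p'$-complement $H_{p'}$. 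The normal $p'$-subgroup $\Delta H_{p'}\trianglelefteq K$ lies in $O_{p'}(K)$ and so acts trivially on every module in the principal block of $\mathcal{O}K$, in particular on $\operatorname{Sc}(K,\Delta Q)$; for $z\in Z(N)\cap Q\subseteq N\cap N'$, the element $(z,z)\in N\times N'$ acts on $M$ as $m\mapsto zmz^{-1}$, trivial by the Koshitani--Lassueur identity $zm=mz$, which holds after choosing $M$ suitably within its isomorphism class---the flexibility encoded in ``may be chosen''. With $\Delta C_G(N)$ acting trivially the $\Delta^{\mathcal{C}}$-structure is in place, and Theorem~\ref{th:equivalent_categories} delivers $\tilde M$ as the required $\bar G$-graded bimodule over $\mathcal{C}$. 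The principal obstacle is this last verification, i.e.\ controlling the action of $\Delta C_G(N)$ on the Scott module by separating its central $p$-part from its $p'$-complement.
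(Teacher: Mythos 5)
The proposal takes a genuinely different route from the paper in two places, and in both places there are gaps.

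\textbf{Stage~1 (the isomorphism $M\simeq\operatorname{Res}^K_{N\times N'}\operatorname{Sc}(K,\Delta Q)$).} You argue via Mackey decomposition of $\operatorname{Res}^K_{N\times N'}\operatorname{Ind}^K_{\Delta Q}\mathcal{O}$ and then assert that ``a Clifford-theoretic argument using that $|\bar G|$ is coprime to $p$'' makes the restriction indecomposable. This step is not spelled out and is not obvious: the restriction of an indecomposable module with trivial top to a normal subgroup of $p'$-index need not be indecomposable in general, and the fact that the $\bar x=1$ Mackey piece is where the trivial quotient ``lies'' is not a well-defined statement (Mackey pieces and the decomposition of a fixed direct summand need not align). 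The paper instead passes to Green correspondents: the Scott--Alperin theorem identifies the correspondents of $M$ and $\operatorname{Sc}(K,\Delta Q)$ as projective covers of trivial modules over $N_{N\times N'}(\Delta Q)/\Delta Q$ and $N_K(\Delta Q)/\Delta Q$, relates these two projective covers using that $\bar G$ is a $p'$-group, and then invokes Dade's extension theorem \cite[Theorem 6.8]{article:Dade1984} to lift the extension from the local normalizers back to $K$. If you wish to keep your Mackey-based route you need to supply the indecomposability step rigorously (e.g.\ via vertices and Knörr's theorem, or by reducing to the paper's Green-correspondent picture after all).

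\textbf{Stage~3 (descent to $\Delta^{\mathcal{C}}$).} Your reduction of the descent condition to ``$\Delta C_G(N)$ acts trivially on $\operatorname{Sc}(K,\Delta Q)$'' is correct, and the decomposition $C_G(N)=(Z(N)\cap Q)\times H_{p'}$ together with the observation $\Delta H_{p'}\le O_{p'}(K)$ is a nice way to dispose of the $p'$-part that the paper does not use (the paper instead appeals to Remark~\ref{r:Gb}, which reduces the whole problem to $z\in Z(N)$ via $\bar G$-invariance of $M$). The real gap is the $p$-part: you write that for $z\in Z(N)\cap Q$ the identity $zm=mz$ ``holds after choosing $M$ suitably within its isomorphism class,'' citing ``the Koshitani--Lassueur identity.'' But this identity is precisely what the proposition is asking to establish; invoking it is circular, and nothing in the preceding stages produces it. The paper's actual argument at this point is the content you are missing: by \cite[Corollary 8.5]{book:NT1988} one may replace the vertex and write $M\simeq\operatorname{Sc}(N\times N',\Delta(ZQ))$ with $Z=Z(N)$, and then a direct computation on $\operatorname{Ind}^{N\times N'}_{\Delta(ZQ)}\mathcal{O}$ shows $(z\otimes1)$ and $(1\otimes z^{-1})$ act identically, so the same holds on the direct summand $M$; this gives $zm=mz$ for all $z\in Z(N)$ (covering both the $p$- and $p'$-parts at once). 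Without this vertex-enlargement step your proof does not close.

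Your Stage~2 (the explicit algebra map $\phi:\mathcal{O}K\to\Delta(A\otimes_{\mathcal{O}}{A'}^{\mathrm{op}})$ and the principal-block observation) is essentially the standard reinterpretation implicit in the paper and is fine as an outline.
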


\begin{proof} By the Scott-Alperin theorem \cite[Theorem 4.8.4]{book:NT1988} (which also holds over $\mathcal{O})$, the Green correspondent  of $M$ is an $\mathcal{O}N_{N\times N'}(\Delta Q)$-module on which $\Delta Q$ acts trivially, and may be seen as the projective cover of the trivial module $\mathcal{O}_{N_{N\times N'}(\Delta Q)/\Delta Q}$. Similarly, the Green correspondent of $\operatorname{Sc}(K,\Delta Q)$ is the projective cover of the trivial module $\mathcal{O}_{N_K(\Delta Q)/\Delta Q}$. Since $\bar G$ is a $p'$-group, and since  $\mathcal{O}_{N_{N\times N'}(\Delta Q)/\Delta Q}=\operatorname{Res}\mathcal{O}_{N_K(\Delta Q)/\Delta Q}$, it follows by \cite[Corollary 3.1.11]{book:Marcus1999} that the projective cover of  $\mathcal{O}_{N_{N\times N'}(\Delta Q)/\Delta Q}$ is the restriction to $\mathcal{O}N_{N\times N'}(\Delta Q)/\Delta Q$ of the projective cover of  $\mathcal{O}_{N_K(\Delta Q)/\Delta Q}$. This means that the Green correspondent of $M$ extends to an $\mathcal{O}N_K(\Delta Q)$-module, which is the Green correspondent of $\operatorname{Sc}(K,\Delta Q)$. It follows by \cite[Theorem 6.8]{article:Dade1984} that $M$ extends to an $\mathcal{O}K$-module, which is isomorphic to $\operatorname{Sc}(K,\Delta Q)$.

For the last statement, by Remark \ref{r:Gb}, it is enough to show that $M$ can be chosen such that $zm=mz$ for all $m\in M$ and $z\in Z$. Note that by \cite[Corollary 8.5]{book:NT1988}, we have that $M\simeq \operatorname{Sc}(N\times N',\Delta(ZQ))$. It is easy to see that  the $\mathcal{O}(N\times N')$-module $\operatorname{Ind}_{\Delta(ZQ)}^{N\times N'}$ satisfies
\[(z\otimes 1)(n\otimes n')\otimes_{\mathcal{O}\Delta(ZQ)}1=(1\otimes z^{-1})(n\otimes n')\otimes_{\mathcal{O}\Delta(ZQ)}1\]
for all  $n\in N$, $n'\in N'$ and $z\in Z$. Since $M$ is a direct summand of $\operatorname{Ind}_{\Delta(ZQ)}^{N\times N'}$, the claim follows.
\end{proof}

\section{Rickard equivalences over \texorpdfstring{$\mathcal{C}$}{C}} \label{s:Rickard}

By using the remarks made in \cite[5.2.1]{book:Marcus1999}, we may extend the results of Section \ref{s:Morita} to the case of Rickard equivalences.  We keep the notations and assumptions of \ref{ss:assmptions}, and we use $\mathcal{H}^b$ to denote a bounded homotopy category.

\begin{subsec} Let $\tilde M$ be a bounded complex of $\bar G$-graded $(A,A')$-bimodules, with $1$-component $M$, which is a bounded complex of $\Delta(A\otimes_\mathcal{O}{A'}\op)$-modules. Recall that $\tilde M$ and its dual $\tilde M^*$ induce a $\bar G$-graded Rickard equivalence between $A$ and $A'$, if there are isomorphisms
\[\tilde{\varphi}:\tilde{M}^{\ast}\otimes_{A}\tilde{M}\to A'\qquad \textrm{ and } \qquad \tilde{\psi}:\tilde{M}\otimes_{A'}\tilde{M}^{\ast}\to A,\]
in the appropriate bounded homotopy categories of $\bar G$-graded bimodules.

We say that this equivalence is over $\mathcal{C}$ if $\tilde M$ is a complex of $\bar G$-graded $(A,A')$-bimodules over $\mathcal{C}$.
\end{subsec}

\begin{subsec} Let $U$ be a bounded complex of $B$-modules, and denote
\[E(U)=\mathrm{End}_{\mathcal{H}(A)}(A\otimes_BU)\op\]
This is compatible with the notation in  \ref{ss:notE(U)}, and $E(U)$ is still a $\bar G$-graded algebra.  We also have the $\bar G$-graded algebra map
\[\theta:C_A(B)\to E(U),\] induced by $c\mapsto(a\otimes u\mapsto ac\otimes u)$. In fact $A\otimes_BU$ is a complex of $\bar G$-graded $(A,C)$-bimodules.
\end{subsec}

\begin{subsec} Assume that $\tilde M$  induces a $\bar G$-graded Rickard equivalence between $A$ and $A'$, and sends $U$ to $U'\in\mathcal{H}^b(B')$. Then the functor $M^*\otimes_A(-)\otimes_AM$ is also a Rickard equivalence which preserves gradings by $\bar G\times \bar G$-sets of complexes of bimodules, and sends $A$ to $A'$ (concentrated in degree $0$).

We have the isomorphism of complexes of $\bar G$-graded bimodules
\[\beta:A'\otimes_{B'} M^{\ast}\to M^{\ast}\otimes_{B}A,\qquad  a'_{\bar g}\otimes_{B'} m^{\ast} \mapsto a'_{\bar g}m^*u_{\bar g}^{-1}\otimes u_{\bar g}\]
for all $\bar g\in \bar G$, and this gives the isomorphism
\[ \varphi_1:E(U)\to E(U'), \qquad    f\mapsto (\beta\otimes_{B}id_U)^{-1}\circ(id_{\tilde{M}^{\ast}}\otimes f)\circ (\beta\otimes_{B}id_U)\]
of complexes of $\bar G$-graded bimodules.

By identifying $C_A(B)$ with $\mathrm{End}_{A\otimes B\op}(A)\op$, we still have the isomorphism
\[\varphi_2:C_A(B)\to C_{A'}(B')\] 
of $\bar G$-graded $\bar G$-acted algebras.
\end{subsec}

\begin{subsec} With these observations, it is easy to see that Remark \ref{r:truncateMorita}, Proposition \ref{prop:commutative_diagram} and Proposition \ref{prop:extension_to_C} still hold by replacing ``Morita equivalence'' with ``Rickard equivalence" (and ``modules" with ``bounded complexes of modules"). On the other hand, according to \cite[Theorem 5.2.5]{book:Marcus1999}, in Theorem \ref{th:Morita_extension} we have to assume in addition that $p$ does not divide the order of $\bar G$. Corollary \ref{cor:Gb} and Remark \ref{r:Gb} can also be adapted to this situation.
\end{subsec}

Consequently, we have the following Rickard equivalence variant of  the Butterfly Theorem (Theorem \ref{th:butterfly}).

\begin{theorem} \label{th:Rickard-butterfly} Let $\hat G$ be another group with normal subgroup $N$, such that the block $b$ is also $\hat G$-invariant. Let ${\C}=\O C_{{G}}(N)$. Assume that:
\begin{enumerate}
\item[{\rm(1)}] $C_G(N)\subseteq G'$,
\item[{\rm(2)}] $\tilde M$ induces  a $\oG$-graded Rickard equivalence over $\C$ between $A$ and $A'$;
\item[{\rm(3)}] the conjugation maps $\varepsilon:G\to \mathrm{Aut}(N)$ and $\hat{\varepsilon}:\hat{G}\to \mathrm{Aut}(N)$ satisfy $\varepsilon(G)=\hat{\varepsilon}(\hat{G})$;
\item[{\rm(4)}] $p$ does not divide the order of $\bar G$.
\end{enumerate}
Denote $\hat{G}'=\hat{\varepsilon}^{-1}(\varepsilon(G'))$. Then there is a $\hat G/N$-graded Rickard equivalence over $\hat{\C}:=\O C_{\hat{G}}(N)$ between $\hat{A}:=b\O \hat{G}$ and $\hat{A}':={b'}\O \hat{G}'$.
\end{theorem}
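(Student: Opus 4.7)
The approach is to mirror the proof of the Morita Butterfly Theorem (Theorem \ref{th:butterfly}) step by step, replacing bimodules with bounded complexes of bimodules throughout, and invoking the Rickard analogues of the key ingredients discussed in the subsection preceding the statement.

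First, I would establish the group-theoretic framework exactly as in the Morita version. From the proof of \cite[Theorem 2.16]{ch:Spath2018}, hypotheses (1) and (3) imply $\hat{G}'\leq \hat{G}$, $C_{\hat{G}}(N)\leq \hat{G}'$, $\hat{G}=N\hat{G}'$ and $N'=N\cap \hat{G}'$, whence $\hat{G}/N\simeq \hat{G}'/N'$ and both $\hat{A}$ and $\hat{A}'$ are strongly $\hat{G}/N$-graded crossed products with $1$-components $B$ and $B'$ respectively. Moreover, $\hat{\C}=\O C_{\hat{G}}(N)$ is a $\hat{G}/N$-graded $\hat{G}/N$-acted $\O$-algebra admitting structural maps into $C_{\hat{A}}(B)$ and $C_{\hat{A}'}(B')$, exactly as before.

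Next, I would pass to the $1$-component $M:=\tilde{M}_1$, which is a bounded complex of $(B,B')$-bimodules inducing a Rickard equivalence between $B$ and $B'$, endowed with a $\Delta^{\C}$-structure inherited from the assumption that $\tilde{M}$ is a complex over $\C$. The crucial step is then to apply the Rickard analogue of \cite[Theorem 4.3]{article:MM2019}, which is a consequence of \cite[Theorem 5.2.5]{book:Marcus1999}. This is precisely where hypothesis (4) that $p\nmid|\bar{G}|$ enters the argument, since lifting a Rickard equivalence along a strongly graded extension requires the grading group to have order invertible in $\O$. The conclusion is that $\hat{A}\otimes_{B}M\simeq M\otimes_{B'}\hat{A}'$ is a bounded complex of $\hat{G}/N$-graded $(\hat{A},\hat{A}')$-bimodules inducing a $\hat{G}/N$-graded Rickard equivalence between $\hat{A}$ and $\hat{A}'$.

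Finally, I would invoke the Rickard analogue of Proposition \ref{prop:extension_to_C} (whose validity is asserted in the preceding subsection) to verify that $\hat{A}\otimes_{B}M$ is in fact a complex of bimodules over $\hat{\C}$. Degreewise, this reduces to the compatibility $(a_{\bar{g}}\otimes_{B}m)c={}^{\bar{g}}c(a_{\bar{g}}\otimes_{B}m)$ for $a_{\bar{g}}\in\hat{A}_{\bar{g}}$, $m\in M$, $c\in\hat{\C}$, which is inherited from the $\C$-bimodule structure of $\tilde{M}$; the only input needed on $M$ itself is $zm=mz$ for $z\in Z(N)$, which follows because $Z(N)\subseteq C_G(N)$ acts both from $A$ and from $A'$ on the $1$-component of $\tilde{M}$. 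The main obstacle, and the only substantive departure from the Morita case, is the lifting step: hypothesis (4) is essential there, and without it the Rickard equivalence on $B$ need not extend to a graded Rickard equivalence on $\hat{A}$; once that step is in hand, the remainder of the proof is purely formal.
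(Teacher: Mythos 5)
Your proof is correct and reconstructs precisely the argument the paper leaves implicit: the paper gives no detailed proof of this theorem, presenting it as a direct consequence of the observations made earlier in Section~\ref{s:Rickard}. You correctly identify that hypothesis (4) enters only at the lifting step --- the Rickard analogue of \cite[Theorem 4.3]{article:MM2019}, which like Theorem~\ref{th:Morita_extension} rests on \cite[Theorem 5.2.5]{book:Marcus1999} --- and that the over-$\hat{\C}$ structure is then obtained degreewise exactly as in the proof of Proposition~\ref{prop:extension_to_C}, the required relation $zm=mz$ for $z\in Z(N)$ being inherited from the $\C$-structure of $\tilde M$.
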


\section{Character triples and module triples} \label{s:triples}

In this section, we show that the relations $\geq$ and $\geq_c$ given in \cite[Definition 2.1.]{ch:Spath2018} and \cite[Definition 2.7.]{ch:Spath2018} are consequences of Rickard equivalences over $\mathcal{C}$.

\begin{subsec} We consider the notations and the assumptions given in \ref{ss:assmptions}. Let $V$ be a  $G$-invariant simple $\K B$-module, and let $V'$ be a  $G$-invariant  simple $\K B'$-module, where $\K B=\K\otimes_{\O}B=(1\otimes b)\K N$ and $\K B'=\K\otimes_{\O}B'=(1\otimes {b'})\K N'$.

Let $\theta\in \text{Irr}_{\K}(B)$ be a $G$-invariant irreducible character associated to $V$, and let $\theta\in \text{Irr}_{\K}(B')$ be a $G'$-invariant irreducible character associated to $V'$. Thus, $(G,N,\theta)$ and $(G',N',\theta')$ are character triples.
\end{subsec}

\begin{defi} \label{def:module_triple}  We say that $\triple{A}{B}{V}$ is a \textit{module triple}, and we will consider its endomorphism algebra
\[E(V):=\text{End}_{\K A}\left(\K A\otimes_{\K B}V\right)\op.\]
Because we assumed that $\K$ contains all the unity roots of order $|G|$ (see section \ref{s:preliminaries}), $E(V)$ is a twisted group algebra of the form $\K_{\alpha}\oG$, with $\alpha\in Z^2(\oG,\K^{\times})$. We know that the class $[\alpha]\in H^2(\oG, \K^{\times})$ depends only on the isomorphism class of $V$, thus, in fact, $[\alpha]$ is determined by $\theta$.
\end{defi}

We recall from \cite{ch:Spath2018} the definition of the relations $\geq$ and $\geq_c$.

\begin{samepage}
\begin{defi} \label{d:chartriplerel} Let $(G,N,\theta)$ and $(G',N',\theta')$ be two character triples.

a) We write $(G,N,\theta)\geq (G',N',\theta')$ if
	\begin{enumerate}
		\item $G=NG'$ and $N'=N\cap G'$;
		\item there exists projective representations $P$ associated with $(G,N,\theta)$ and $P'$ associated with $(G',N',\theta')$, such that their factor sets $\alpha$ and $\alpha'$ respectively, satisfy $\alpha'=\alpha_{H\times H}$.
	\end{enumerate}
	We say that $(G,N,\theta)\geq (G',N',\theta')$ is given by $(P,P')$.

b)  We write $(G,N,\theta)\geq_c (G',N',\theta')$ if
	\begin{enumerate}
		\item $(G,N,\theta)\geq (G',N',\theta')$;
		\item $C_G(N)\subseteq G'$;
		\item there exists $(P,P')$ giving $(G,N,\theta)\geq (G',N',\theta')$ such that for any $c\in C_G(N)$, there exists $\zeta\in\mathbb{C}^{\times}$ with $P(c)=\zeta \mathrm{id}_{\theta(1)}$ and $P'(c)=\zeta \mathrm{id}_{\theta'(1)}$.
	\end{enumerate}
\end{defi}
\end{samepage}

\begin{prop} \label{p:Delta(V)}	Let $\Delta(V):=\Delta(\K G\otimes E(V)\op).$ Then $\Delta(V)$ is isomorphic to $\K _{\mathrm{Inf}^G_{\oG}\alpha} G$ as $\bar G$-graded algebras. In particular, the $\K N$-module structure of $V$ extends to a $\K _{\operatorname{Inf}^G_{\oG}\alpha} G$-module structure.
\end{prop}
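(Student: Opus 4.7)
My plan is to first realize $V$ as a natural $\Delta(V)$-module via Theorem~\ref{th:equivalent_categories}, and then identify $\Delta(V)$ explicitly as a twisted group algebra of $G$ with cocycle class $[\operatorname{Inf}^G_{\oG} \alpha]$.

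For the first step, I would view $\C := \K$ as a trivially $\oG$-graded and trivially $\oG$-acted algebra. The induced module $M := \K G \otimes_{\K N} V$ is naturally a $\oG$-graded $(\K G, E(V))$-bimodule over $\C$, with the left $\K G$-grading $M_{\g} = (\K G)_{\g} \otimes_{\K N} V$ and the right $E(V)$-action coming from $E(V) = \operatorname{End}_{\K G}(M)\op$. Its $1$-component is $M_1 \simeq V$, so Theorem~\ref{th:equivalent_categories} endows $V$ with a $\Delta(V)$-module structure whose restriction to $\Delta(V)_1 \simeq \K N$ is the original $\K N$-action.

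For the second step, I fix a $\K$-basis $\{e_{\g}\}_{\g \in \oG}$ of $E(V)$ with $e_{\g} \in E(V)_{\g}$, $e_1 = 1$ and $e_{\g} e_{\h} = \alpha(\g, \h) e_{\g\h}$. Writing $\epsilon_{\g} := e_{\g^{-1}} \in {E(V)}\op_{\g}$ (per the convention ${E(V)}\op_{\g} = E(V)_{\g^{-1}}$) and $v_g := g \otimes_\K \epsilon_{\g}$ for each $g \in G$, the collection $\{v_g\}_{g \in G}$ forms a $\K$-basis of $\Delta(V)$ refining the $\oG$-grading, since $\dim_\K (\K G)_{\g} = |N|$ and $\dim_\K {E(V)}\op_{\g} = 1$. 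The multiplication rule of Lemma~\ref{lemma:Delta-Algebra} then gives $v_g v_h = \beta(g, h) v_{gh}$ for a cocycle $\beta \in Z^2(G, \K^\times)$ depending only on the cosets, hence $\beta = \operatorname{Inf}^G_{\oG} \beta_0$ for some $\beta_0 \in Z^2(\oG, \K^\times)$ expressible directly in terms of $\alpha$. To finish, one verifies $[\beta_0] = [\alpha]$ in $H^2(\oG, \K^\times)$ — both classes are the canonical cohomological invariant of the $G$-invariant simple module $V$ (measuring the obstruction to extending $V$ linearly to a $\K G$-module), so they must coincide — and then rescales the basis $v_g \mapsto \lambda(g) v_g$ by a coboundary-realizing map $\lambda: G \to \K^\times$ factoring through $\oG$, obtaining the desired $\oG$-graded $\K$-algebra isomorphism $\Delta(V) \simeq \K_{\operatorname{Inf}^G_{\oG} \alpha} G$. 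The ``In particular'' clause is then immediate by transport of structure: the $\Delta(V)$-module $V$ becomes a $\K_{\operatorname{Inf}^G_{\oG} \alpha} G$-module whose restriction to $\K N$ (the common $1$-component of both algebras) recovers the original action.

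The main obstacle is the precise cocycle bookkeeping underlying the identification $[\beta_0] = [\alpha]$: the opposite-grading convention ${E(V)}\op_{\g} = E(V)_{\g^{-1}}$ naturally produces $\beta_0$ in a ``dual'' form involving $\alpha$ evaluated on inverted arguments, and verifying that this is cohomologous to $\alpha$ itself requires either appealing to the canonicity of the Dade class attached to $V$ or constructing the required coboundary explicitly — the latter being available because $\K$ is algebraically closed and contains all roots of unity of order $|G|$, so the relevant projective representations split as needed.
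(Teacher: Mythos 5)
Your overall strategy coincides with the paper's: realize $V$ as a $\Delta(V)$-module via Theorem~\ref{th:equivalent_categories}, then read off a crossed-product structure on $\Delta(V)$ from a homogeneous basis. You are in fact more careful than the paper in explicitly flagging the opposite-grading issue, and your computation that the natural cocycle of $\Delta(V)$ on the basis $v_g = g\otimes e_{\g^{-1}}$ is $\beta_0(\g,\h) = \alpha(\h^{-1},\g^{-1})=\alpha^T(\g,\h)$ is correct. The gap is the claim $[\beta_0]=[\alpha]$, and it cannot be repaired by ``canonicity of the Dade class'' or by $\K$ containing roots of unity. Writing $\lambda(\g):=\alpha(\g,\g^{-1})$ and using the $2$-cocycle identity twice (once with the triple $(\g,\h,\h^{-1})$, once with $(\g\h,\h^{-1},\g^{-1})$) gives
\[
\alpha(\h^{-1},\g^{-1})\,\alpha(\g,\h)\;=\;\frac{\lambda(\g)\lambda(\h)}{\lambda(\g\h)},
\]
so $\alpha^T\cdot\alpha=\delta\lambda$ and hence $[\alpha^T]=[\alpha]^{-1}$ in $H^2(\oG,\K^{\times})$. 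This is \emph{not} $[\alpha]$ unless $[\alpha]^2=1$. A concrete counterexample: let $G$ be the Heisenberg group over $\mathbb{Z}/n$ with $n>2$, $N=Z(G)$, and $V$ a faithful linear character of $N$; then $\oG\cong\mathbb{Z}/n\times\mathbb{Z}/n$, the class $[\alpha]$ is a generator of $H^2(\oG,\K^{\times})\cong\mathbb{Z}/n$, and $[\alpha]^{-1}\neq[\alpha]$. So the coboundary-realizing $\lambda$ you invoke in the last step need not exist, and the ``canonicity'' appeal fails precisely because the module $V$ canonically produces a class \emph{and} its inverse (one is the obstruction class attached to $V$, the other is attached to the contragredient); the two are not equal in general.

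For comparison: the paper's own proof glides over exactly this point. It writes a degree-$\g$ element of $\Delta(V)$ as $gn\otimes\g$ and then evaluates $\g\h$ as $\alpha(\g,\h)\overline{gh}$, which is the product in $E(V)$, not in $E(V)^{\mathrm{op}}$ -- under the paper's convention $(A')^{\mathrm{op}}_{\g}=A'_{\g^{-1}}$ the second tensor factor lives in $E(V)^{\mathrm{op}}_{\g}=E(V)_{\g^{-1}}$ and the relevant product is the opposite one, yielding $\alpha^T$ exactly as you found. Thus the honest output of the argument is $\Delta(V)\simeq\K_{\operatorname{Inf}^G_{\oG}\alpha^T}G$ with $[\alpha^T]=[\alpha]^{-1}$. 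This discrepancy is harmless for the way Proposition~\ref{p:Delta(V)} is later used (in Theorem~\ref{t:eq-triples} only the agreement of the cocycles of $\Delta(V)$ and $\Delta(V')$ is needed, and both pick up the same transposition), but it is a genuine obstacle to the literal claim ``$\Delta(V)\simeq\K_{\operatorname{Inf}^G_{\oG}\alpha}G$'' and to your step~5. The proof as you have written it does not close this gap.
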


\begin{proof} It is well-known that the induced module $\K G\otimes_{\K N}V$ is a $\oG$-graded $(\K G, E(V))$-bimodule and $(\K G\otimes_{\K N}V)_1=V$, hence $V$ is a $\Delta$-module. We may write
\[	\K _{\text{Inf}^G_{\oG}\alpha} G =\{\sum_{g\in G}a_g\widetilde{g}\mid a_g\in\K\}\text{, where }\widetilde{g}\widetilde{h}=\text{Inf}^G_{\oG}\alpha(g,h)\widetilde{gh},\]
and
\[	E(V)=\K _{\alpha} \oG = \{\sum\limits_{\g\in\oG}a_{\g}\g\mid a_{\g}\in\K\}\text{, where }\g\bar{h}=\alpha(\g,\bar{h})\overline{gh}.\]
Hence, an element of degree $\g$ of $\Delta(V)$ can be written as $gn\otimes \g$, where $n\in N$. We will prove now, that the correspondence
\[gn\otimes \g\stackrel{\varepsilon}{\mapsto} \widetilde{gn}\]
	determines a $\oG$-graded algebra isomorphism from $\Delta$ to $\K _{\text{Inf}^G_{\oG}\alpha} G$.
	We check only the multiplicative property, the others being easy to check. Let $g,h\in G$ and $n,m\in N$. We have
\[	\begin{array}{rcl}
		\varepsilon((gn\otimes \g)(hm\otimes \bar{h}))&=&\varepsilon(gnhm\otimes\bar{g}\bar{h})\\
		&=&\varepsilon(ghnm\otimes\alpha(\bar{g},\bar{h})\overline{gh})\\
		&=&\alpha(\bar{g},\bar{h})\varepsilon(ghnm\otimes\overline{gh})\\
		&=&\alpha(\bar{g},\bar{h})\widetilde{ghnm}
	\end{array}		\]
	and
\[	\begin{array}{rcl}
		\varepsilon(gn\otimes \bar{g})\varepsilon(hm\otimes \bar{h})&=&\widetilde{gn}\widetilde{hm}\\
		&=&\text{Inf}^G_{\oG}\alpha(gn,hm)\widetilde{gn hm}\\
		&=&\alpha(\overline{gn},\overline{hm})\widetilde{gn hm}\\
		&=&\alpha(\bar{g},\bar{h})\widetilde{gn hm}\\
	\end{array}		\]
Henceforth, the statement is proven.
\end{proof}

\begin{remark} \label{remark:similar_projective_representations} The $\Delta(V)$-module structure of $V$ gives rise to the projective $\K$-representation of $G$ associated to $\theta$. Two projective representations $P$ and $P'$ are similar  if and only if $E(V)=\K_{\alpha}\oG$ and $E(V')=\K_{\alpha'}\oG$ are isomorphic as $\oG$-graded algebras, or if and only if  $[\alpha]=[\alpha']$ in $H^2(\oG,\K^{\times})$.  This holds if and only if the $\Delta(V)$-module $V$ is isomorphic to the $\Delta(V')$-module $V'$, via the isomorphism as $\Delta(V)\simeq \Delta(V')$ of  $\oG$-graded algebras.
\end{remark}

Now, we can formulate a version in terms of modules of the order relations between character triples.

\begin{defi}  \label{d:modtriplerel} Let $\triple{A}{B}{V}$ and $\triple{A'}{B'}{V'}$ be two module triples.

a)	We write $\triple{A}{B}{V}\geq \triple{A'}{B'}{V'}$ if
\begin{enumerate}
		\item $G=NG'$ and $N'=N\cap G'$;
		\item there exists a $\oG$-graded algebra isomorphism
\[E(V)=\text{End}_{\K A}(\K A\otimes_{\K B}V)\op\to E(V')=\text{End}_{\K A'}(\K A'\otimes_{\K B'}V')\op.\]
	\end{enumerate}

\begin{samepage}
b) We write $(A,B,V)\geq_c (A',B',V')$ if
\begin{enumerate}
	\item $G=G'N$, $N'=N\cap G'$
	\item $C_G(N)\subseteq G'$
	\item we have the following commutative diagram of $\oG$-graded $\K$-algebras:
	\[\xymatrix@C+=3cm{
		E(V) \ar@{->}[r]^{\sim} & E(V')\\
	    \K \C  \ar@{->}[u] \ar@{=}[r]^{\mathrm{id}_{\K \C}} & \K \C, \ar@{->}[u]
}\]
	where $\K \C=\K C_G(N)$ is regarded as a $\oG$-graded $\oG$-acted $\K$-algebra, with 1-component $\K Z(N)$.
\end{enumerate}
\end{samepage}
\end{defi}

We now give a link  between the relation $\geq_c$ for character triples, the relation $\geq_c$ for module triples and Rickard equivalences over $\C$. Recall that since $\mathcal{K}B$ is a semisimple algebra, the indecomposable objects in  $\mathcal{D}^b(\mathcal{K}B)$ are the simple $\mathcal{K}B$-modules regarded as complexes concentrated in some degree $n\in \mathbb{Z}$.

\begin{theorem} \label{t:eq-triples} Assume that  $C_G(N)\subseteq G'$, and that the complex $\tilde{M}$ induces a $\oG$-graded Rickard equivalence over $\C:=\O C_G(N)$ between $A$ and $A'$.

Let $V$ be a $G$-invariant simple $\K B$-module with character $\theta$, and let $V'$ be a $G'$-invariant simple $\K B'$-module corresponding to $V$ via the given correspondence,  with character $\theta'$. Then we have that $(A,B,V)\geq_c (A',B',V')$ and $(G,N,\theta)\geq_c (G',N',\theta')$.
\end{theorem}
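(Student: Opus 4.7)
The strategy is to apply the Rickard-equivalence version of Proposition \ref{prop:commutative_diagram} (valid by the final paragraph of Section \ref{s:Rickard}) after extending scalars from $\O$ to $\K$ to obtain the module-triple relation, and then to use Proposition \ref{p:Delta(V)} together with Remark \ref{remark:similar_projective_representations} to pass from module triples to character triples.

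Concretely, I would first observe that $\K\otimes_{\O}\tilde{M}$ induces a $\oG$-graded Rickard equivalence over $\K\C=\K C_G(N)$ between $\K A$ and $\K A'$, and since $\K B$ and $\K B'$ are semisimple, the resulting correspondence on simple modules sends $V$ to $V'$ (up to a harmless homological shift). Applying Proposition \ref{prop:commutative_diagram} (in its Rickard form) to $V$ and $V'$ yields the commutative diagram of $\oG$-graded $\K$-algebras
\[\xymatrix@C+=1.5cm{
 E(V)\ar@{->}[r]^{\sim} & E(V')\\
 C_{\K A}(\K B)\ar@{->}[r]^{\sim}\ar@{->}[u] & C_{\K A'}(\K B')\ar@{->}[u]\\
 \K\C\ar@{->}[u]\ar@{=}[r]^{\mathrm{id}_{\K\C}} & \K\C\ar@{->}[u]
}\]
whose outer rectangle is exactly condition (3) of Definition \ref{d:modtriplerel}(b); conditions (1) and (2) there are among the standing hypotheses of \ref{ss:assmptions} together with $C_G(N)\subseteq G'$. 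This establishes $(A,B,V)\geq_c(A',B',V')$.

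To derive the character-triple relation, I would apply Proposition \ref{p:Delta(V)} to both $V$ and $V'$: it provides projective representations $P$ of $G$ and $P'$ of $G'$ associated to $\theta$ and $\theta'$, with factor sets $\operatorname{Inf}^{G}_{\oG}\alpha$ and $\operatorname{Inf}^{G'}_{\oG}\alpha'$, where $E(V)\simeq \K_\alpha\oG$ and $E(V')\simeq \K_{\alpha'}\oG$. By Remark \ref{remark:similar_projective_representations}, the top isomorphism of the diagram above forces $[\alpha]=[\alpha']$ in $H^2(\oG,\K^\times)$; after renormalizing $P'$ we may take $\alpha=\alpha'$, and pulling back along $G/N\simeq G'/N'\simeq \oG$ gives equal factor sets on $G'\times G'$, verifying (1) and (2) of Definition \ref{d:chartriplerel}(b). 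For the scalar condition, given $c\in C_G(N)$ and $n\in N$, using a normalized factor set we have $P(c)P(n)=P(cn)=P(nc)=P(n)P(c)$, so $P(c)$ is a $\K N$-endomorphism of the simple module $V$ and Schur's lemma yields $P(c)=\zeta\cdot\mathrm{id}_V$ for some $\zeta\in\K^\times$; similarly $P'(c)=\zeta'\cdot\mathrm{id}_{V'}$. The bottom $\K\C$-triangle of the diagram, translated via Proposition \ref{p:Delta(V)}, then forces $\zeta=\zeta'$.

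The delicate step is this last identification: verifying that the image of $c\in \K\C$ in $E(V)\simeq\K_\alpha\oG$ is precisely the scalar $\zeta$ times the chosen basis element in degree $\bar c$, so that commutativity of the $\K\C$-triangle really does give $\zeta=\zeta'$. This requires consistent choices of the invertible homogeneous elements $u_{\bar g}$ used to trivialize the gradings on $A$ and $A'$ and careful tracking of the normalizations made in Proposition \ref{p:Delta(V)}, but no new ingredient beyond this bookkeeping is needed.
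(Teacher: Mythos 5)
Your argument is essentially the paper's own proof: extend scalars to $\K$, apply the Rickard version of Proposition \ref{prop:commutative_diagram} to obtain $(A,B,V)\geq_c(A',B',V')$, then use Proposition \ref{p:Delta(V)} and Remark \ref{remark:similar_projective_representations} for the character-triple relation, with the scalar condition following from the $\K\C$-triangle of the commutative diagram. The minor differences are expository---the paper explicitly verifies that $\K A\otimes_{\K B}V$ and $\K A'\otimes_{\K B'}V'$ correspond under the Rickard equivalence, while you spell out the Schur's-lemma step more fully---and the normalization bookkeeping you flag at the end is likewise left implicit in the paper.
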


\begin{proof} We know by \cite[Proposition 3.3]{article:Broue1992} that $\K \tilde{M}:=\K\otimes_{\O}\tilde{M}$ and $\K \tilde{M}^{\ast}:=\K\otimes_{\O}\tilde{M}^{\ast}$ induce a Rickard equivalence (clearly $\oG$-graded) between $\K A$ and $\K A'$. To prove that $E(V)\simeq E(V')$ as $\bar G$-graded algebras,
it is enough to prove that $\K A'\otimes_{\K B'}V'$ corresponds to $\K A\otimes_{\K B}V$.   Indeed, because $V$ corresponds to $V'$, we have that $V'\cong \K \Ms\otimes_{\K B}V$, where $\Ms$ is the $1$-component of $\tilde{M}^{\ast}$.	Therefore,
\[\begin{array}{rcl}
		\K \tilde{M}^{\ast}\otimes_{\K A}(\K A\otimes_{\K B} V) & \cong & (\K \tilde{M}^{\ast}\otimes_{\K A}\K A)\otimes_{\K B} V\\
		& \cong & \K \tilde{M}^{\ast}\otimes_{\K B} V\\
		& \cong & (\K A'\otimes_{\K B'}\K \Ms)\otimes_{\K B} V\\
		& \cong & \K A'\otimes_{\K B'}(\K \Ms\otimes_{\K B} V)\\
		& \cong & \K A'\otimes_{\K B'}V',\\
	\end{array}	\]
so the statement is proved.

As $\tilde{M}$ and $\tilde{M}^{\ast}$ are complexes of $\oG$-graded bimodules over $\C$, it is clear that $\K\tilde{M}:=\K\otimes_{\O}\tilde{M}$ and $\K\tilde{M}^{\ast}:=\K\otimes_{\O}\tilde{M}^{\ast}$ are complexes of $\oG$-graded $(\K A,\K A')$-bimodules over $\K \C$, and of $\bar G$-graded $(\K A',\K A)$-bimodules over $\K \C$, respectively. By the proof of Proposition \ref{prop:commutative_diagram}, we deduce that  $(A,B,V)\geq_c (A',B',V')$.

Since $E(V)\simeq E(V')$, we have that $\Delta(V)\simeq \Delta(V')$ as $\bar G$-graded algebras. Henceforth, by Proposition \ref{p:Delta(V)} and Remark \ref{remark:similar_projective_representations}, we obtain $(G,N,\theta)\geq (G',N',\theta')$.

Finally, to verify condition (3) in Definition \ref{d:chartriplerel} b), let $c\in C_G(N)$. Since $V$ is a simple $\K N$-module, $c$ acts on $V$ as a scalar $\zeta\in\K$. By the commutativity of the diagram from condition (3) in Definition \ref{d:modtriplerel} b), $c$ acts as the same scalar $\zeta$  on the $\Delta(V')$-module $V'$. This shows that $(G,N,\theta)\ge_c(G',N',\theta')$.
\end{proof}

\addcontentsline{toc}{section}{References}

\end{document}